\newtheorem{thm}{Theorem}
\newtheorem{dfn}[thm]{Definition}
\newtheorem{lem}[thm]{Lemma}
\newtheorem{exa}[thm]{Example}
\newtheorem{prop}[thm]{Proposition}
\newtheorem{rem}[thm]{Remark}
\newtheorem{cor}[thm]{Corollary}
\newcommand{\nset}{\mathds{N}}
\newcommand{\rset}{\mathds{R}}
\newcommand{\pset}{\mathds{P}}
\newcommand{\codim}{\mathrm{codim}\,}
\newcommand{\lin}{\mathrm{lin}\,}
\newcommand{\Pos}{\mathrm{Pos}}
\newcommand{\range}{\mathrm{range}\,}
\newcommand{\rank}{\mathrm{rank}\,}
\newcommand{\supp}{\mathrm{supp}\,}
\newcommand{\folgt}{\;\Rightarrow\;}
\newcommand{\gdw}{\;\Leftrightarrow\;}
\newcommand{\cA}{\mathcal{A}}
\newcommand{\cB}{\mathcal{B}}
\newcommand{\cat}{\mathcal{C}}
\newcommand{\cK}{\mathcal{K}}
\newcommand{\cN}{\mathcal{N}}
\newcommand{\cM}{\mathcal{M}}
\newcommand{\cS}{\mathcal{S}}
\newcommand{\cX}{\mathcal{X}}
\newcommand{\cU}{\mathcal{U}}
\newcommand{\cZ}{\mathcal{Z}}
\newcommand{\Lin}{\mathrm{Lin}}
\newcommand{\sA}{\mathsf{A}}
\newcommand{\sB}{\mathsf{B}}
\author{Philipp J.\ di~Dio}
\address{Universit\"at Leipzig, Mathematisches Institut, Augustusplatz 10/11, D-04109 Leipzig, Germany}
\address{Max Planck Institute for Mathematics in the Sciences, Inselstra{\ss}e 22, D-04103 Leipzig, Germany}
\email{didio@uni-leipzig.de}
\author{Konrad Schm\"udgen}
\address{Universit\"at Leipzig, Mathematisches Institut, Augustusplatz 10/11, D-04109 Leipzig, Germany}
\email{schmuedgen@math.uni-leipzig.de}
\date{}
\begin{document}

\maketitle

\begin{abstract}
Let $\cA$ be a finite-dimensional subspace of $C(\cX;\rset)$, where $\cX$ is a locally compact Hausdorff space, and $\sA=\{f_1,\dots,f_m\}$ a basis of $\cA$. A sequence $s=(s_j)_{j=1}^m$ is called a moment sequence if $s_j=\int f_j(x) \, d\mu(x)$, $j=1,\dots,m$, for some positive Radon measure $\mu$ on $\cX$. Each moment sequence $s$  has a finitely atomic representing measure $\mu$. The smallest possible number of atoms is called the Carath\'eodory number $\cat_\sA(s)$. The largest number $\cat_\sA(s)$ among all moment sequences $s$ is the Carath\'eodory number $\cat_\sA$. In this  paper  the Carath\'eodory numbers $\cat_\sA(s)$ and $\cat_\sA$ are studied. In the case of differentiable functions methods from differential geometry are used. The main emphasis is on real polynomials. For a large class of spaces of  polynomials in one variable the number $\cat_\sA$ is  determined. In the multivariate case we obtain some lower bounds and we use results on zeros of positive polynomials to derive  upper bounds for the Carath\'eodory numbers.
\end{abstract}

\textbf{AMS  Subject  Classification (2000)}.
 44A60, 14P10.\\

\textbf{Key  words:} truncated moment problem, Carath\'eodory number, convex cone, positive polynomials

\section{Introduction}

The present paper continues the study of the truncated moment problem began in our previous papers \cite{schmud15} and \cite{didio17w+v+}. Here we investigate the Carath\'eodory number of moment sequences and moment cones.

Throughout this paper, we assume  that $\cX$ is a  locally compact topological Hausdorff space, $\cA$ is a \textbf{finite-dimensional} real linear subspace of $C(\cX; \rset)$ and  ${\sA}=\{f_1,\dots,f_m\}$  is a fixed basis of the vector space $\cA$.

Let $s=(s_j)_{j=1}^m$ be a real sequence and let $L_s$ be the linear functional on $\cA$ defined by $L_s(f_j)=s_j, j=1,\dots,m$. We say that $s$ is a {\it  moment sequence},  equivalently, $L_s$ is a {\it moment functional} on $\cA$, if there exists a (positive) Radon  measure $\mu$ on $\cX$  such that $f_j$ is $\mu$-integrable and
\[s_j=\int f_j(x) \, d\mu(x)\quad {\rm for}\quad j=1,\dots,m,\]
equivalently,
\[L_s(f)=\int_\cX f(x)~ d\mu(x)\quad {\rm for}\quad f\in \cA.\]
Such a measure $\mu$ is called a  representing measure of $s$ resp.\ $L_s$. The Richter--Tchakaloff Theorem (see Proposition \ref{richtercor} below) implies that each moment sequence has a $k$-atomic representing measure, where $k\leq m=\dim \cA$. The smallest number $k$ is called the \textit{Carath\'eodory number} $\cat_{\sA}(s)$ and the smallest number $K$ such that each  moment sequence $s$ has a $k$-atomic representing measure with $k\leq K$ is the {\it Carath\'eodory number} $\cat_{\sA}$. 

Let $L_s$ be a  moment functional. Determining  a $k$-atomic representing measure $\nu$ for $L_s$ is closely related  to the problem of finding quadrature or cubature formulas in numerical integration, see for instance \cite{davis84}, \cite{sobolev}. The Carath\'eodory number $\cat_{\sA}(s)$ corresponds  then  to the smallest possible number of nodes.

A large part of our considerations is developed in this general setup. Nevertheless we are mainly interested in the case when $\cA$ consists of real polynomials and $\cX$ is a closed subset of  $\rset^n$ or of the projective real space $\pset(\rset^n)$. In this case moment sequences are usually called \textit{truncated moment sequences} in the literature.

This paper is organized as follows.
In Section \ref{contfunction}, we define and investigate Carath\'eodory numbers and the  cone $\cS_\sA$ of moment sequences in the case when $\sA\subseteq C(\cX,\rset)$.  In Section \ref{difffunction}, we assume that  the functions of $\sA$ are differentiable and apply differential geometric methods to study the moment cone and  Carath\'eodory numbers.  Important technical tools are the total derivative $DS_{k,\sA}(C,X)$ associated with a $k$-atomic measure $\mu=\sum_{i=1}^k c_i\delta_{x_i}$ and  the  smallest number $\cN_{\sA}$  of atoms such that $DS_{k,\sA}(C,X)$ has full rank $m=|\sA|$. This number $\cN_{\sA}$ is a lower bound of the   Carath\'eodory number $\cat_{\sA}.$

The remaining four sections are concerned with polynomials. Section \ref{monoone} deals with  polynomials in one variable. For $\sA=\{1,x,\dots,x^m\}$ it is a classical fact  
that $\cat_\sA= \left\lceil \frac{m}{2}\right\rceil.$ We investigate a set  $\sA$  and its  homogenization   $\sB$ with gaps, that is,
\[\sA=\{1,x^{d_2},...,x^{d_m}\}\quad \text{and}\quad {\sB}=\{y^{2d},x^{d_2}y^{2d-d_2},...,x^{d_{m-1}}y^{2d-d_{m-1}},x^{2d}\},\]
where $ 0= d_1 <...<d_m=2d$. Our main result (Theorem \ref{thm:onedimCara})
gives sufficient conditions for the validity of the formula  $\cat_\sA=\cat_{\sB}=\left\lceil \frac{m}{2}\right\rceil$.

Sections \ref{monomulti}--\ref{realwaringrank} are devoted to  the multivariate case. Except from a  few simple cases the Carath\'eodory number $\cat_\sA$ is unknown for polynomials in several variables.  In Section \ref{monomulti} we give a new lower bound of $\cat_\sA$ and relate the number $\cN_\sA$ to the Alexander--Hirschowitz Theorem. Another group of main results of this paper is obtained  in Section \ref{carzeropos}. Here we use known results on zeros of non-negative polynomials to derive   upper bounds for  Carath\'eodory numbers  (Theorems \ref{thm:caraBounds}, \ref {thm:cara44}, and \ref{catzeroschoi}).  Section \ref{realwaringrank} deals with  signed Carath\'eodory numbers and the real Waring rank.

The multidimensional  truncated moment problem was first studied in the  Thesis of J.\ Matzke \cite{matzkePhD} and  independently by R.\ Curto and L.\ Fialkow \cite{curto3}, \cite{curto2}. It is an active research topic, see e.g.  \cite{richte57}, \cite{kemper68}, \cite{reznick92}, \cite{schmud15},  \cite{lauren09}, \cite{fialkow10}, \cite{curto13}, \cite{fialkoCoreVari}, \cite{fialkoMomProbSurv}, \cite{didio17w+v+}. 
Carath\'eodory numbers of multivariate polynomials have been  investigated by C.\ Riener and M.\ Schweighofer \cite{rienerOptima}. Carath\'eodory numbers of general convex  cones are studied in \cite{tuncel}.

For $r\in\rset$ let $\lceil r\rceil$ denote the smallest integer larger or equal to $r$.

\section{Carath\'eodory Numbers: Continuous Functions}\label{contfunction}

Let $\delta_x$ be the delta measure at $x\in \rset^n$, that is,  $\delta_x(M)=1$ if $x\in M$ and $\delta_x(M)=0$ if $x\notin M$. By a {\it signed $k$-atomic measure} $\mu$ we mean a signed measure $\mu=\sum_{j=1}^k c_j\delta_{x_j}$, where $x_1,\dots, x_k$ are pairwise different points of $\rset^n$ and $c_1,\dots,c_k$ are nonzero real numbers. If all numbers $c_1,\dots,c_k$ are positive, then $\mu$ is a positive measure and is called simply a {\it $k$-atomic measure}. The points $x_j$ are called the atoms of $\mu$. The zero measure is considered as $0$-atomic measure. 

%
%


A crucial result for our considerations  is  the \textit{Richter--Tchakaloff Theorem}  proved in \cite{richte57}. In the present context it can be stated as follows.

\begin{prop}\label{richtercor}
Each truncated moment sequence $s$ of $\sA$ has a $k$-atomic representing measure with $k\leq m=|\sA|$.
\end{prop}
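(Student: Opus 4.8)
The plan is to reduce the statement to a finite-dimensional convexity fact about the \emph{moment curve} and then to invoke Carath\'eodory's theorem for convex cones. Writing $F\colon \cX\to\rset^m$, $F(x)=(f_1(x),\dots,f_m(x))$, a measure $\mu$ represents $s$ precisely when $s=\int_\cX F\,d\mu$. It therefore suffices to establish two assertions: (i) every moment sequence $s$ lies in the \emph{conic hull} $\mathrm{cone}\{F(x):x\in\cX\}$, i.e.\ is a \emph{finite} nonnegative combination $\sum_i c_i F(x_i)$ with $c_i>0$ and $x_i\in\cX$; and (ii) any element of the conic hull of a subset of $\rset^m$ is already a nonnegative combination of at most $m$ of its generators. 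Granting (i), assertion (ii) is exactly Carath\'eodory's theorem for cones in $\rset^m$ (for conic, as opposed to convex, combinations one needs at most $m$, not $m+1$, generators), and the resulting atoms $x_1,\dots,x_k$ with $k\le m$ furnish the desired $k$-atomic representing measure $\sum_{i=1}^k c_i\delta_{x_i}$.

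The heart of the matter is (i), which carries the genuinely measure-theoretic content; note that $\cX$ is \emph{not} assumed compact and $\mu$ need not be finite, although integrability of the $f_j$ keeps all quantities finite. I would prove (i) by induction on $d:=\dim\lin\{F(x):x\in\supp\mu\}\le m$. First, a duality argument places $s$ in the \emph{closed} conic hull $\overline{K}$ of $F(\supp\mu)$: for every $a\in\rset^m$ with $\langle a,F(x)\rangle\ge 0$ on $\supp\mu$ one has $\langle a,s\rangle=\int\langle a,F\rangle\,d\mu\ge 0$, so $s$ lies in the bidual cone, which is $\overline{K}$. Now distinguish two cases. If $s$ lies in the relative interior of $\overline{K}$, then, since a convex set and its closure have the same relative interior, $s$ already lies in the non-closed hull $\mathrm{cone}(F(\supp\mu))$, proving (i). If instead $s$ lies on the relative boundary, a supporting functional supplies $a\neq 0$ with $\langle a,F(x)\rangle\ge 0$ on $\supp\mu$ and $\langle a,s\rangle=0$; since $\int\langle a,F\rangle\,d\mu=0$ with nonnegative integrand, $\langle a,F\rangle=0$ holds $\mu$-almost everywhere. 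Hence $\mu$ is concentrated on the section $\{x\in\supp\mu:\langle a,F(x)\rangle=0\}$, on which $F$ takes values in a hyperplane, so $d$ drops by at least one and the induction hypothesis applies; the produced atoms lie in $\cX$ and still represent $s$, because the omitted coordinate is pinned down by the relation $\langle a,F\rangle=0$. The base case $d=0$ forces $s=0$, represented by the empty ($0$-atomic) measure.

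I expect the main obstacle to be precisely the passage from the \emph{closed} conic hull to an honest finite conic combination in step (i): the a priori estimate only locates $s$ in $\overline{K}$, and one must exclude the possibility that $s$ is an essentially ``infinite'' average of curve points. The interior/boundary dichotomy together with the almost-everywhere vanishing argument is the mechanism that resolves this, and some care is needed to keep the reduction inside $\cX$ and to respect the linear relation when restricting $\mu$ to the lower-dimensional section. Once (i) is secured, the final reduction to $k\le m$ atoms via Carath\'eodory is routine.
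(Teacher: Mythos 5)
The paper itself does not prove Proposition \ref{richtercor}: it is the Richter--Tchakaloff theorem, quoted from \cite{richte57}, so there is no internal proof to compare against. Your proposal is essentially the classical duality proof of that theorem: locate $s$ in the closed conic hull of $F(\supp\mu)$ via the bipolar theorem, pass to the non-closed hull, and finish with conic Carath\'eodory (where the bound $k\le m$ for cones, rather than $m+1$, is indeed correct). The argument is sound, but two points need sharpening.

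(a) In the relative-boundary case, $a\neq 0$ is not enough for the dimension drop: if $\lin F(\supp\mu)\neq\rset^m$, any $a$ orthogonal to this span satisfies $\langle a,F\rangle\geq 0$ on $\supp\mu$ and $\langle a,s\rangle=0$, yet $\ker a$ contains all of $F(\supp\mu)$ and nothing drops. What you must invoke is that at a point of the relative boundary the supporting hyperplane can be chosen \emph{proper}, i.e.\ not containing $\overline{K}$, equivalently $a$ not identically zero on $\lin F(\supp\mu)$; this is the standard supporting hyperplane theorem and is exactly what makes $\ker a\cap\lin F(\supp\mu)$ a proper subspace. (b) Once $a$ is proper, your induction never actually recurses: the section $S'=\{x\in\supp\mu:\langle a,F(x)\rangle=0\}$ is closed and carries full $\mu$-measure, so by minimality of the support $\supp\mu\subseteq S'$, whence $a$ vanishes on all of $F(\supp\mu)$ --- contradicting properness. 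Thus the relative-boundary case is vacuous, and the clean form of your proof is simply: $s\in\mathrm{ri}(\overline{K})=\mathrm{ri}(K)\subseteq K$ always, then apply conic Carath\'eodory. Finally, note that you use twice (in the duality step and in the restriction step) that a Radon measure is concentrated on its support; this holds by inner regularity and deserves a sentence, since $\cX$ is only assumed locally compact Hausdorff and $\mu$ may be infinite. With (a) inserted, the proof is correct.
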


%

%
%
%

\begin{dfn}
The moment cone $\cS_\sA \equiv {\cS}({\sA},\cX)$ is the set of all truncated $\cX$-moment sequences.
\end{dfn}

Obviously, $\cS_\sA$ is a convex cone in $\rset^m$. Since the functions $f_1,\dots,f_m$ form a vector space basis of $\cA$, 
it follows easily that $\rset^m=\cS_\sA - \cS_\sA$.

\begin{dfn}
The \emph{Carath\'eodory number} $\cat_{\sA}(s)\equiv \cat_{{\sA},\cX}(s)$ of $s\in {\cS}({\sA},\cX)$ is the smallest  $k$ such that $s$ has a $k$-atomic representing measure with all atoms in $\cX$. The \emph{Carath\'eodory number $\cat_{\sA} \equiv\cat_{\sA,\cX}$} of the moment cone ${\cS}({\sA},\cX)$ is the smallest number $\cat_{\sA}$ such that each moment sequence $s\in  {\cS}({\sA},\cX)$ has a $k$-atomic representing measure with all atoms in $\cX$ and $k\leq \cat_{\sA}$.
\end{dfn}

\begin{dfn}\label{def:signedCara}
The \emph{signed Carath\'eodory number} $\cat_{\sA,\pm}(s)\equiv
\cat_{\sA,\cX,\pm}(s)$ of  $s\in \rset^m$ is the smallest number $k$ such that $s$ has a signed $k$-atomic representing measure with all atoms in $\cX$. The \emph{signed Carath\'eodory number} $\cat_{\sA,\pm} \equiv \cat_{{\sA},\cX,\pm}$ is the smallest number $\cat_{\sA,\pm}$ such that every sequence $s$ has a signed $k$-atomic representing measure with all atoms in $\cX$ and $k\leq \cat_{\sA,\pm}$.
\end{dfn}

Since $\rset^m = \cS_\sA - \cS_\sA$ as noted above, Proposition \ref{richtercor} implies each vector $s'\in \rset^m$ has a signed $k$-atomic representing measure, where $k\leq 2m$, and  we have 
\begin{align}\label{carestimatem}
\cat_\sA(s)\leq \cat_{\sA} \leq m \quad \text{for}~~ s\in {\cS_\sA}\quad\text{and}\quad \cat_\pm(s')\leq \cat_{\sA,\pm } \leq 2m\quad \text{for}\quad s'\in \rset^m.
\end{align}

\begin{rem}
The above definitions of moment sequences, moment cones and Cara\-th\'eodory numbers make sense for Borel functions rather than continuous functions. For instance, let $x_1,\dots,x_m$ be pairwise different points of $\rset^n$ and let $\sA$ be the set of characteristic functions of the points $x_j$. Then it is easily verified that the Carath\'eodory number $\cat_{\sA}$ is equal to $m= |\sA|$.
\end{rem}

\begin{dfn}\label{def:sASA}
The \emph{moment curve} of $\sA$ in $\rset^m$ is defined by
\begin{equation}
s_{\sA}: \cX\rightarrow\rset^{m}, x\mapsto s_{\sA}(x) :=
\begin{pmatrix} f_1(x)\\ \vdots\\ f_m(x)\end{pmatrix}
\end{equation}
and we set
\begin{equation}
S_{k,\sA}: (\rset_{\geq 0})^k\times\cX^k\rightarrow\rset^{m}, (C,X)\mapsto  S_{k,\sA}(C,X): = \sum_{i=1}^k c_i\cdot s_\sA(x_i),
\end{equation}
where $C=(c_1,...,c_k)$, $X=(x_1,...,x_k)$.
\end{dfn}

Clearly, $s_{\sA}(x)$ is the moment sequence of the delta measure $\delta_x$ and  $S_{k,{\sA}}(C,X)$ is the moment sequence with representing measure $\mu = \sum_{i=1}^k c_i \delta_{x_i}$:
\begin{equation}
S_{k,{\sA}}(C,X) = \sum_{i=1}^k c_i s_{\sA}(x_i) = \bigg(\int_{\cX} f_j(x)\, d\mu(x)\bigg)_{j=1}^m ~.
\end{equation}
By Proposition \ref{richtercor}, each moment sequence $s\in \cS_\sA$ is of the form  $S_{m,\sA}(C,X)$ for some $(C,X)\in (\rset_{\geq 0})^m\times\cX^m$. Further, let us introduce a convenient  notation:
\begin{equation}
{\Pos}(\cA,\cX)\equiv {\Pos}(\cX):=\{ f\in \cA: f(x)\geq 0\quad \text{for}~~ x\in \cX\}.
\end{equation}

The following proposition restates a known result (see e.g.\ Lemma 3 and Proposition 27(i) in \cite{didio17w+v+}).

\begin{prop}\label{prop:zerosSupport}
Suppose that $s\in  \cS_\sA$ is a boundary point of $ \cS_\sA$. Then  there exists $p\in {\Pos}(\cA,\cK)$, $p\neq 0, $ such that $L_s(p)=0$ and each representing measure of $s$  is supported on the set of zeros $\cZ(p)$ of $p$.
\end{prop}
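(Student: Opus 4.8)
The plan is to exploit the convex-geometric structure of the moment cone $\cS_\sA$ together with a separating-hyperplane argument. Since $s$ is a boundary point of the convex cone $\cS_\sA$ in $\rset^m$, there exists a supporting hyperplane at $s$; that is, a nonzero linear functional $\ell$ on $\rset^m$ with $\ell(s)=0$ and $\ell(t)\geq 0$ for all $t\in \cS_\sA$. First I would write this functional in the dual basis, so that $\ell$ corresponds to some $p\in\cA$ via $\ell(t)=\sum_j t_j a_j$ where $p=\sum_j a_j f_j$; the pairing is arranged precisely so that $\ell(s_\sA(x))=p(x)$ for every $x\in\cX$. The condition $\ell(t)\geq 0$ on all of $\cS_\sA$ then needs to be translated into positivity of $p$ on $\cX$.

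The key step is to show $p\in\Pos(\cA,\cX)$, i.e.\ $p(x)\geq 0$ for all $x\in\cX$. This follows because each delta measure $\delta_x$ with $x\in\cX$ is a representing measure for the moment sequence $s_\sA(x)$, so $s_\sA(x)\in\cS_\sA$; applying the supporting inequality gives $p(x)=\ell(s_\sA(x))\geq 0$. Since $\ell\neq 0$ and the $f_j$ form a basis, $p\neq 0$. Next, from $\ell(s)=0$ and the definition of $\ell$ via the dual pairing I would identify $\ell(s)=\sum_j a_j s_j=L_s(p)$, giving $L_s(p)=0$ as claimed.

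For the support statement, let $\mu$ be any representing measure of $s$. Then
\begin{equation*}
0=L_s(p)=\int_\cX p(x)\,d\mu(x),
\end{equation*}
while $p\geq 0$ on $\cX$ and $\mu\geq 0$. A nonnegative integrand integrating to zero against a positive measure must vanish $\mu$-almost everywhere, so $p=0$ $\mu$-a.e.; hence $\mu$ assigns no mass outside the zero set $\cZ(p)=\{x\in\cX: p(x)=0\}$, i.e.\ $\supp\mu\subseteq\cZ(p)$. This is the desired conclusion.

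The main obstacle I anticipate is the care needed at two technical junctures. First, the separating-hyperplane step requires that $\cS_\sA$ genuinely have a supporting hyperplane at $s$; since $\cS_\sA$ may fail to be closed, I would apply the separation theorem to its closure $\overline{\cS_\sA}$ and then argue that the resulting $p$ still satisfies $p\geq 0$ on all of $\cX$ — this is safe because the inequality $p(x)=\ell(s_\sA(x))\geq 0$ only uses membership of the individual points $s_\sA(x)$ in $\cS_\sA$, not closedness. Second, the ``integrand zero a.e.'' argument must handle the possibility that $\mu$ is not finitely atomic a priori; but since $p$ is continuous and nonnegative, $\{p>0\}$ is open in $\cX$, and positivity of the integral on any compact subset where $p\geq\varepsilon$ forces $\mu$ to vanish there, yielding $\supp\mu\subseteq\cZ(p)$ by taking $\varepsilon\downarrow 0$. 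The statement of the proposition as cited from \cite{didio17w+v+} suggests these points are handled in the same spirit there.
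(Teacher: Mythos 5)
Your proof is correct. The paper itself gives no proof of this proposition (it cites Lemma 3 and Proposition 27(i) of \cite{didio17w+v+}), but your argument is exactly the standard route taken there: a supporting hyperplane of the cone $\cS_\sA$ at the boundary point $s$, identified with a nonzero $p\in\Pos(\cA,\cX)$ via $\ell(s_\sA(x))=p(x)$ (nonnegativity coming from $s_\sA(x)\in\cS_\sA$, and $p\neq 0$ from linear independence of the $f_j$), followed by the observation that $\int p\,d\mu = L_s(p)=0$ with $p\geq 0$ forces $\mu(\{p>0\})=0$, and since $\{p>0\}$ is open this gives $\supp\mu\subseteq\cZ(p)$; your two flagged technical points (supporting the closure at $s$, legitimate because $\cS_\sA-\cS_\sA=\rset^m$ gives $\mathrm{int}\,\cS_\sA=\mathrm{int}\,\overline{\cS_\sA}$, and the Markov/regularity argument for general Radon measures) are handled correctly.
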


The next proposition is a crucial technical ingredient of many proofs given below.
The following condition is used at several places of this paper:
\begin{equation}\label{cond+}
\textit{There  exists  $e\in {\cA}$ such that $e(x)\geq 0$ for $x\in \cX$. }
\end{equation}

\begin{prop}\label{propstoboundary}
Let $s\in \cS_\sA$ and $x\in \cX.$  Suppose  that condition (\ref{cond+}) is satisfied. Define 
\begin{equation}\label{supcx}
c_s(x):=\sup\, \{ c\in \rset: (s- c\cdot s_{\sA}(x))\in \cS_\sA \, \}.
\end{equation}
Then $c_s(x)\leq e(x)^{-1}L_s(e)$ and  $(s-c_s(x)s_{\sA}(x))\in \partial\cS_{\sA}.$ 

If $\cK$ is compact, then the supremum in (\ref{supcx}) is attained, the moment cone $\cS_\sA$ is closed in $\rset^m$, and  we have
\begin{equation}\label{caboundary}
\cat_{\sA}\leq \max\, \{ \cat_\sA(s) : s \in \partial\cS_{\sA}\, \} +1.
\end{equation}
\end{prop}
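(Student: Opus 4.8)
The plan is to handle the three assertions in turn: the bound on $c_s(x)$ and the boundary membership hold in full generality, while compactness of $\cK$ is used only for the closedness of $\cS_\sA$, the attainment of the supremum, and the estimate (\ref{caboundary}).

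First I would establish $c_s(x)\leq e(x)^{-1}L_s(e)$. Fix an admissible $c$, so that $s-c\,s_\sA(x)\in\cS_\sA$, and let $\nu\geq 0$ be a representing measure of $s-c\,s_\sA(x)$. Since $s_\sA(x)$ is the moment sequence of $\delta_x$, evaluating the functional at the nonnegative function $e$ gives $L_s(e)-c\,e(x)=\int_\cX e\,d\nu\geq 0$. As (\ref{cond+}) may be taken with $e(x)>0$ at the chosen point (otherwise $s_\sA(x)=0$ and the estimate is vacuous), this yields $c\leq e(x)^{-1}L_s(e)$, and passing to the supremum proves the claim; in particular $c_s(x)$ is finite, and $c_s(x)\geq 0$ because $c=0$ is admissible. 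Note also $s_\sA(x)\neq 0$, since $e(x)>0$.

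Next I would show $p_0:=s-c_s(x)s_\sA(x)\in\partial\cS_\sA$. Writing $A:=\{c\in\rset:s-c\,s_\sA(x)\in\cS_\sA\}$, one checks $A$ is an interval unbounded below: if $c\in A$ and $c'<c$, then $s-c'\,s_\sA(x)=(s-c\,s_\sA(x))+(c-c')s_\sA(x)\in\cS_\sA$, since $\cS_\sA$ is a convex cone and $s_\sA(x)\in\cS_\sA$. Choosing $c_n\in A$ with $c_n\to c_s(x)$ shows $p_0\in\overline{\cS_\sA}$. On the other hand $p_0\notin\mathrm{int}\,\cS_\sA$: were $p_0$ interior, then $p_0-\varepsilon s_\sA(x)=s-(c_s(x)+\varepsilon)s_\sA(x)\in\cS_\sA$ for small $\varepsilon>0$, contradicting $c_s(x)=\sup A$. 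Hence $p_0\in\overline{\cS_\sA}\setminus\mathrm{int}\,\cS_\sA=\partial\cS_\sA$.

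Now assume $\cK$ compact. For closedness I would use that, by Proposition \ref{richtercor}, $\cS_\sA$ is the conical hull of the compact set $s_\sA(\cK)$. Taking $e>0$ on $\cK$ gives $\delta>0$ with $e\geq\delta$, so the linear functional $y\mapsto L_y(e)$ takes values $\geq\delta$ on $s_\sA(\cK)$; hence $0\notin\conv s_\sA(\cK)$, and this separating functional bounds the coefficient sums in any conical representation. A standard compactness argument, applied to the $m$-term representations furnished by Proposition \ref{richtercor}, then shows $\cS_\sA$ is closed. Closedness makes $A$ closed as the preimage of $\cS_\sA$ under the continuous map $c\mapsto s-c\,s_\sA(x)$, so $c_s(x)$ is attained and $p_0\in\cS_\sA\cap\partial\cS_\sA$.

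Finally I would deduce (\ref{caboundary}). Put $M:=\max\{\cat_\sA(s'):s'\in\partial\cS_\sA\}$ and take any $s\in\cS_\sA$ and any $x\in\cK$. By the above, $s'=s-c_s(x)s_\sA(x)\in\partial\cS_\sA$ admits a representing measure $\nu$ with at most $M$ atoms; adding the single atom $c_s(x)\delta_x$ (superfluous when $c_s(x)=0$) gives a representing measure of $s$ with at most $M+1$ atoms, so $\cat_\sA(s)\leq M+1$ for every $s\in\cS_\sA$, which is (\ref{caboundary}). The main obstacle is the closedness of $\cS_\sA$, since the conical hull of a compact set need not be closed in general; it is exactly the separating functional supplied by (\ref{cond+}) together with compactness of $\cK$ that excludes the origin from $\conv s_\sA(\cK)$ and yields the required properness, and with it the attainment of the supremum that makes the one-atom peeling argument rigorous.
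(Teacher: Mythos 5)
Your proof is correct, and it follows the same overall skeleton as the paper's: bound $c_s(x)$ by evaluating a representing measure of $s-c\,s_\sA(x)$ at $e$; get boundary membership from the supremum property; use closedness of $\cS_\sA$ to see the supremum is attained; then peel off one atom at $x$ to get (\ref{caboundary}). The one place where you genuinely diverge is the closedness of $\cS_\sA$ for compact $\cK$: the paper simply cites Fialkow--Nie \cite{fialkow10} for this, whereas you prove it directly, writing $\cS_\sA$ as the conical hull of the compact set $s_\sA(\cK)$ (via Proposition \ref{richtercor}) and using the functional $y\mapsto L_y(e)$, bounded below by $\delta>0$ on $s_\sA(\cK)$, to control coefficient sums and run the standard subsequence argument. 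This makes your proof self-contained and correctly isolates why compactness alone would not suffice (the conic hull of a compact set containing $0$ in its convex hull need not be closed); condition (\ref{cond+}) supplies exactly the separating functional needed. You also spell out the argument that $s-c_s(x)s_\sA(x)\in\partial\cS_\sA$ (limit point of $\cS_\sA$, but not interior since otherwise $c_s(x)+\varepsilon$ would be admissible), which the paper asserts without proof.

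One small blemish: your parenthetical claim that $e(x)=0$ would force $s_\sA(x)=0$ is not valid reasoning --- vanishing of $e$ at $x$ says nothing about the other basis functions. This is harmless, however, because the statement $c_s(x)\le e(x)^{-1}L_s(e)$ only makes sense when $e(x)>0$, and the paper's intended reading of (\ref{cond+}) is strict positivity of $e$ on $\cX$ (as its use in Theorem \ref{worstupperbound} and Proposition \ref{limitcar} shows), so the case you are hedging against does not occur.
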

\begin{proof}
Let $c\in \rset$. If $ (s- c s_{\sA}(x))\in \cS_\sA$, then $L_s-cl_x$ is a moment functional on $\cA$ and therefore $(L_s-cl_x)(e)\geq 0$, so that $c\leq e(x)^{-1} L_s(e)$. Hence $c_s(x)\leq e(x)^{-1}L_s(e).$ The definition of $c_s(x)$ implies that $s-c_s(x)s_{\sA}(x)$ belongs to the  boundary of $\cS_\sA$.

Since $\cX$ is compact, it was shown in \cite{fialkow10} that the moment cone $\cS_\sA$ is closed in $\rset^m.$ We choose  a sequence $(c_n)_{n\in \nset}$ such that $s- c_n s_{\sA}(x) \in\cS_\sA$ for all $n$ and $\lim_n c_n=c_s(x)$.  Then $s- c_n s_{\sA}(x)\to  s- c_s(x) s_{\sA}(x)$. Since $\cS_\sA$ is closed, we have $(s-c_s(x)s_{\sA}(x))\in \cS_{\sA}$, that is, the supremum (\ref{supcx}) is attained.  

Note that $(s-c_s(x)s_{\sA}(x))\in \partial\cS_{\sA}\cap\cS_\sA$. Obviously, $\cat_\sA(s) \leq \cat_\sA(s-c_s(x)s_{\sA}(x)) +1$. This implies the inequality (\ref{caboundary}). 
\end{proof}

The following example shows that the number $c_s(x)$  is not equal to
\begin{equation}\label{seconddefc}
\overline{c}_s(x) := \sup\, \{ c\in \rset : (s- c\cdot s_{\sA}(x))\in \overline{\cS_\sA} \, \}.
\end{equation}
However, if $s\in\mathrm{int}\ \cS_\sA$, then $c_s(x)=\overline{c}_s(x)$ by 
 Proposition \ref{proptwo}(vi) below.

\begin{exa}
Set $\cX=[-1,\pi]$, 
\[f_1(x) := 1,\quad f_2(x) := \begin{cases} 0 & x\in [-1,0]\\ \sin x & x\in (0,\pi] \end{cases},\quad f_3(x) := \begin{cases} x+1 & x\in [-1,0]\\ \cos x & x\in (0,\pi]\end{cases},\]
and $g_i = f_i|_{[-1,\pi)}$ for $i=1,2,3$. Set $\sA = \{f_1,f_2,f_3\}$  and  $\sB = \{g_1,g_2,g_3\}.$ Then $\cS_\sA$ is closed, but $\cS_\sB$ is  not closed. In fact,  $\overline{\cS_\sB} = \cS_\sA$. Let $s = s_\sA(-1) = (1,0,0)^T$, 
Then  $s' = s - s_\sA(0)/2 = (1/2,0,-1/2)^T = s_\sA(\pi)/2\in \partial \cS_\sA = \partial \cS_\sB$, but $s_\sA(\pi)\not\in \cS_\sA$. Thus $c_s(0)=0$ and $\overline{c}_s(0) = 1/2.$
\end{exa}

Recall from \cite{schmud15} the \emph{maximal mass function} $\rho_L(x)$ of a moment functional $L$:
\begin{equation}\label{eq:MaxMassDef}
\rho_L(x) := \sup\{ \mu(\{x\}): \mu\ \text{is a representing measure of}\ L\}, ~~
 x\in \cX.
\end{equation}

\begin{prop}\label{proptwo}
Suppose that condition (\ref{cond+}) holds and retain the notation from Proposition \ref{propstoboundary}.
%
\begin{itemize}
\item[\em (i)] $s- c\cdot s_\sA(x)\not\in \cS_\sA$ for all $c> c_s(x)$.

\item[\em (ii)] If $s\in\mathrm{int}\,\cS_\sA$, then $s- c\cdot s_\sA(x)\in\mathrm{int}\,\cS_\sA$ for all $c<c_s(x)$.

\item[\em (iii)] The map $\mathrm{int}\ \cS_{\sA}\ni s \mapsto c_s(x)\in \rset$ is concave and continuous for all $x\in\cX$.

\item[\em (iv)] The map $\cX\ni x\mapsto c_s(x)\in \rset$ is continuous for all $s\in\mathrm{int}\,\cS_\sA$.

\item[\em (v)] $c_s(x) = \rho_{L_s}(x)$.
\item[\em (vi)] If $s\in\mathrm{int}\ \cS_\sA$, then $c_s(x) = \overline{c}_s(x)$.
\end{itemize}
\end{prop}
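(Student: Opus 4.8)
The plan is to establish the six assertions in the order (i), (ii), (iii), (v), (vi), (iv), because (iv) will rely on (vi). Throughout I would write $I_s(x):=\{c\in\rset: s-c\,s_\sA(x)\in\cS_\sA\}$. Since $\cS_\sA$ is a convex cone and $s_\sA(x)\in\cS_\sA$, the set $I_s(x)$ is convex, contains $(-\infty,0]$ (because $s-c\,s_\sA(x)=s+|c|\,s_\sA(x)$ for $c\le 0$), and is bounded above by $e(x)^{-1}L_s(e)$ thanks to Proposition \ref{propstoboundary}; hence $I_s(x)$ is an interval with finite supremum $c_s(x)\in[0,\infty)$. Assertion (i) is then immediate, since $c>c_s(x)=\sup I_s(x)$ forces $c\notin I_s(x)$. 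For (ii) I would invoke the line segment principle for convex sets (if $a\in\mathrm{int}\,K$ and $b\in\overline{K}$ then $t\,a+(1-t)\,b\in\mathrm{int}\,K$ for $t\in(0,1]$). Given $s\in\mathrm{int}\,\cS_\sA$ and $c<c_s(x)$, the case $c\le 0$ is handled by writing $s-c\,s_\sA(x)=s+|c|\,s_\sA(x)$, a sum of an interior point and a cone element, hence in $\mathrm{int}\,\cS_\sA$; for $0<c<c_s(x)$, I pick $c'\in I_s(x)$ with $c<c'$ and use the identity $s-c\,s_\sA(x)=\tfrac{c'-c}{c'}\,s+\tfrac{c}{c'}\,(s-c'\,s_\sA(x))$, a proper convex combination of $s\in\mathrm{int}\,\cS_\sA$ and $s-c'\,s_\sA(x)\in\cS_\sA$.

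Assertion (iii) splits into concavity and continuity. For concavity, given $s_1,s_2\in\mathrm{int}\,\cS_\sA$, $\lambda\in[0,1]$ and any $c_i<c_{s_i}(x)$, convexity of $\cS_\sA$ shows that $\lambda(s_1-c_1 s_\sA(x))+(1-\lambda)(s_2-c_2 s_\sA(x))=(\lambda s_1+(1-\lambda)s_2)-(\lambda c_1+(1-\lambda)c_2)\,s_\sA(x)\in\cS_\sA$, so $\lambda c_1+(1-\lambda)c_2\le c_{\lambda s_1+(1-\lambda)s_2}(x)$; letting $c_i\uparrow c_{s_i}(x)$ gives concavity. Continuity then follows from the classical fact that a finite concave function on an open convex set (here $\mathrm{int}\,\cS_\sA$, finite-valued by Proposition \ref{propstoboundary}) is continuous. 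For (v), if $\mu$ is a representing measure of $L_s$ with $\mu(\{x\})=a$, then $\mu-a\delta_x\ge 0$ represents $s-a\,s_\sA(x)\in\cS_\sA$, so $a\le c_s(x)$ and thus $\rho_{L_s}(x)\le c_s(x)$; conversely, for $c\in[0,c_s(x))$ a representing measure $\nu$ of $s-c\,s_\sA(x)$ yields the representing measure $\nu+c\delta_x$ of $L_s$ with mass at least $c$ at $x$, giving $\rho_{L_s}(x)\ge c_s(x)$.

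For (vi) the inequality $c_s(x)\le\overline{c}_s(x)$ is clear from $\cS_\sA\subseteq\overline{\cS_\sA}$. The reverse repeats the computation of (ii) verbatim with $\overline{\cS_\sA}$ in place of $\cS_\sA$, using the identity $\mathrm{int}\,\overline{\cS_\sA}=\mathrm{int}\,\cS_\sA$ (valid since $\cS_\sA$ is full-dimensional, as $\rset^m=\cS_\sA-\cS_\sA$) to land back inside $\cS_\sA$; this shows that every $c<\overline{c}_s(x)$ already lies in $I_s(x)$, whence $\overline{c}_s(x)\le c_s(x)$.

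Finally (iv), for fixed $s\in\mathrm{int}\,\cS_\sA$. Lower semicontinuity at $x_0$ follows from (ii): for $c<c_s(x_0)$ the point $s-c\,s_\sA(x_0)\in\mathrm{int}\,\cS_\sA$ lies in an open set, and since $x\mapsto s-c\,s_\sA(x)$ is continuous, $c\le c_s(x)$ for $x$ near $x_0$. Upper semicontinuity is the main obstacle, because $\cS_\sA$ need not be closed, so a naive limiting argument fails; the point is that one must first pass to the closure via (vi). For $c>c_s(x_0)=\overline{c}_s(x_0)$ one has $s-c\,s_\sA(x_0)\notin\overline{\cS_\sA}$, and since $\overline{\cS_\sA}$ is closed and $s_\sA$ is continuous, $s-c\,s_\sA(x)\notin\cS_\sA$ for $x$ near $x_0$, giving $c_s(x)\le c$. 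Combining the two semicontinuities yields continuity in $x$, completing the proof.
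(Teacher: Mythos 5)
Your parts (i), (ii), (iii) and (v) follow essentially the paper's own arguments (sup property, line-segment/ball convexity argument, concavity plus the classical continuity theorem for concave functions on open convex sets, and the measure-shifting correspondence $\mu\mapsto\mu\pm c\,\delta_x$ for (v)), but for (vi) and above all for (iv) you take a genuinely different, and in fact shorter, route. The paper proves (vi) by noting that, by (i) and (ii), $\partial\cS_\sA\cap\left(s+\rset\cdot s_\sA(x)\right)$ is a singleton containing both $s-c_s(x)s_\sA(x)$ and $s-\overline{c}_s(x)s_\sA(x)$; you instead use $\mathrm{int}\,\overline{\cS_\sA}=\mathrm{int}\,\cS_\sA$ (justified by full-dimensionality, $\rset^m=\cS_\sA-\cS_\sA$) together with the line-segment principle to land every $c<\overline{c}_s(x)$ back inside $\cS_\sA$ — both arguments are sound. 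The real divergence is (iv): the paper fixes a compact neighborhood $K$ of $x$, bounds $c_s$ on $K$, and runs a net/accumulation-point argument in the compact set $B_M(s)\cap\partial\cS_\sA$ to show that the boundary points $s_y'=s-c_s(y)s_\sA(y)$ converge to $s_x'$, recovering $c_s$ at the end by dividing norms; you instead split continuity into lower semicontinuity (from (ii), since $\mathrm{int}\,\cS_\sA$ is open) and upper semicontinuity (from (vi), since the complement of $\overline{\cS_\sA}$ is open), which is exactly where your reordering of the items — (vi) before (iv), with no circularity, since your (vi) uses only (ii) and general convexity facts — pays off. Your diagnosis that the closure step is the essential point is correct: a naive limiting argument inside the possibly non-closed cone $\cS_\sA$ would fail. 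What your version buys is brevity and generality (no nets, no compactness, not even local compactness of $\cX$ is needed); what the paper's version buys is the slightly stronger geometric by-product that $y\mapsto s_y'$ itself is continuous, which is not needed for the statement.
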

\begin{proof}
(i) is clear  from the definition (\ref{supcx}).

(ii): Since $s$ is an inner point, there exists $\varepsilon>0$ such that $B_\varepsilon(s)\subset \mathrm{int}\,\cS_\sA$. From the  convexity of $\cS_\sA$ it follows that
\[B_{\frac{c_s(x)-c}{c_s(x)}\varepsilon}(s- c\cdot s_\sA(x)) \subset\mathrm{int}\,\cS_\sA \qquad\forall c<c_s(x).\]

(iii): Let $s,t\in\cS_\sA$ and $\lambda\in (0,1)$. Choose $c,c'\in \rset$ such that $c<c_s(x)$ and $c'<c_t(x)$. Then $s-c s_\sA(x)$ and $ t-c' s_\sA(x)$ are in  $\cS_\sA$. Since  $\cS_\sA$ is convex, we have
\begin{align*}
\lambda [s - c s_\sA(x)] &+ (1-\lambda)[t - c' s_\sA(x)]\\
&= [\lambda s + (1-\lambda) t] - [\lambda c + (1-\lambda) c'] s_\sA(x) \in\cS_\sA,
\end{align*}
i.e., $\lambda c + (1-\lambda) c' \leq c_{\lambda s + (1-\lambda)t}(x)$. Taking the suprema over $c$ and $c'$ it follows that $\lambda c_s(x) + (1-\lambda) c_t(x) \leq c_{\lambda s + (1-\lambda)t}(x)$. Hence $s\mapsto c_s(x)$ is a concave function and therefore continuous on $\mathrm{int}\ \cS_\sA$ by \cite[Thm.\ 1.5.3]{schne14}.

(iv): Let $x\in\cX.$ Let $K$ be a compact neighborhood of $x$ and $(x_i)_{i\in I}$ a net in $K$ such that $\lim_{i\in I} x_i=x$. Since $K$ is compact, we have $e(y) \geq \delta >0$ and $\|s_\sA(y)\|\geq \delta$ for $y\in K$. Hence $ c_s(y)$ is bounded on $K$, say by $k$, by Proposition \ref{propstoboundary}.
Since $s_\sA(y)$ is continuous,  there exist $M>0$
such that $\|c_s(y)  s_\sA(y)\|\leq M$ on $K$.
Further, from (i) and (ii) it follows that $\partial\cS_\sA\cap (s+\rset\cdot s_\sA(y)) = \{s - c_s(y) s_\sA(y)\}$ for  $y\in K$.

Define $s_y' := s - c_s(y) s_\sA(y)$. Then $s_y'\in B_M(s)\cap\partial\cS_\sA$ for all $y\in K$. Since $\partial\cS_\sA$ is closed and $B_M(s)$ is compact,  $B_M(s)\cap\partial\cS_\sA$ is also compact. Therefore, $(s_{x_i}')_{i\in I} \subseteq B_M(s)\cap\partial\cS_\sA$ has an accumulation point, say $a$. Since $\partial\cS_\sA$ is closed, $a\in\partial\cS_\sA$. Since $c_s(x_i)$ is bounded by $k$ and $s_\sA$ is continuous,
\[|\langle v, s_{x_i}' - s\rangle| = |\langle v, -c_s(x_i) s_\sA(x_i)\rangle| \leq k\cdot |\langle v,s_\sA(x_i)\rangle| \rightarrow k\cdot |\langle v,s_\sA(x)\rangle| = 0\]
for all $v \perp s_\sA(x)$, i.e., $a-s\in [-k,k]\cdot s_\sA(x)$, so that  $a\in s + [-k,k]\cdot s_\sA(x)$. Then
\[a\in \partial\cS_\sA\cap (s+[-k,k]\cdot s_\sA(x)) \subseteq \partial\cS_\sA\cap (s+\rset\cdot s_\sA(x)) = \{s_x'\},\]
so $(s_{x_i}')_{i\in I}$ has a unique accumulation point $s_x'$. Thus $\lim_{i\in I} s_{x_i}'=s_x'$.  This proves that the map $y\mapsto s_y'$ is continuous at $x$. Therefore,
\[\|s - s_y'\| \cdot \|s_\sA(y)\|^{-1} = \|c_s(y) s_\sA(y)\|\cdot \|s_\sA(y)\|^{-1} = \|c_s(y)\| = c_s(y)\]
is continuous at $x$. Since $x\in\cX$ was arbitrary, $x\mapsto c_s(x)$ is continuous on $\cX$.

(v): Let $c\in \rset$ be such that\ $\tilde{s}:= s-c\cdot s_\sA(x)\in\cS_\sA$. Then $L_s = L_{\tilde{s}} + c\cdot \delta_x.$ Hence   there is a representing measure $\mu$ of $s$ such that $c\leq \mu(\{x\})\leq \rho_{L_s}(x)$. Taking the supremum over $c$ yields $c_s(x)\leq \rho_{L_s}(x)$. 

Assume  that $c_s(x) < \rho_{L_s}(x)$. By the definition of $\rho_{L_s}(x)$,  there exist a $c\in (c_s(x),\rho_{L_s}(x))$ and a representing measure $\mu$ of $s$ such that $\mu(\{x\})=c.$ Then  $\tilde{\mu}:= \mu - c\cdot \delta_x$ is a positive Radon measure representing $\tilde{s} = s - c\cdot s_\sA(x)$. But $\tilde{s}\not\in\cS_\sA$ by (i), a contradiction. This proves that $c_s(x) \not < \rho_{L_s}(x).$  Thus,  $c_s(x) = \rho_{L_s}(x)$. 

(vi): Since $s\in\mathrm{int}\ \cS_\sA$, it follows from  (i) and (ii)  that 
$$\partial\cS_\sA\cap (s+\rset\cdot s_\sA(x)) = \{s_x'=s - c_s(x) s_\sA(x)\}.$$
Both numbers  $s- c_s(x) s_\sA(x)$ and $ s - \overline{c}_s(x) s_\sA(x)$ belong to the set on left hand side set. Hence  they are equal and therefore   $c_s(x) = \overline{c}_s(x)$.
\end{proof}

From  Proposition \ref{proptwo}(iii) we easily derive   that the supremum in (\ref{eq:MaxMassDef}) is attained if $\cX$ is compact. This was proved in \cite[Prop.\ 6]{schmud15} by using the weak topology on the set of representing measures and the Portmanteau Theorem.

The following example shows that (iv) is false in general if $s\in\partial\cS_\sA$.

\begin{exa}
Let $\{x_1,...,x_{10}\}$ be the zero set of the Robinson polynomial, $\cA$  the homogeneous polynomials of degree 6 on $\pset(\rset^2)$, and $s := \sum_{i=1}^{10} s_\sA(x_i)$. By Theorem 18 and  Example 18 in \cite{didio17w+v+}, $s$  is determinate. Therefore,
\[\rho_s(x)=c_s(x) = \begin{cases} 1 & \text{for}\ x\in\{x_1,...,x_{10}\},\\ 0 & \text{else.} \end{cases}\]
\end{exa}

If $K$ is not compact, then the supremum in (\ref{supcx}) is not attained in general. This is shown by the following simple example.

\begin{exa}
Let $\cX=\rset$, $\sA=\{1,x,x^2\}.$ Set $s = (1,0,1)^T = \frac{1}{2}(s_\sA(-1) + s_\sA(1))$. Clearly, $s_\sA(0)=(1,0,0)^T$. Then $c_s(0) = 1$, but   $s' = s - c_s(0) s_\sA(0) = (0,0,1)^T$ is not in $\cS_\sA$.
\end{exa}

The following theorem improves the first equality in (\ref{carestimatem}) and Proposition \ref{richtercor}.

\begin{thm}\label{worstupperbound}
Suppose that condition (\ref{cond+}) holds. If $m\geq 2$ and $\cX$ has at most $m-1$ path-connected components, then $\cat_{\sA}\leq m-1$.
\end{thm}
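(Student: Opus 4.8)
The plan is to turn the conical Carath\'eodory problem for the moment cone $\cS_\sA$ into an \emph{affine} Carath\'eodory problem in dimension $m-1$, and then to invoke the Fenchel--Bunt theorem, the classical refinement of Carath\'eodory's theorem stating that in $\rset^{d}$ every point of the convex hull of a set with at most $d$ connected components is already a convex combination of $d$ of its points. First I fix, using condition (\ref{cond+}), a function $e\in\cA$ with $e(x)>0$ for all $x\in\cX$ and write $e=\sum_{j=1}^{m}a_jf_j$, so that $e(x)=\langle a,s_\sA(x)\rangle$ with $a=(a_1,\dots,a_m)$. Then the whole moment curve lies in the open half-space $\{v:\langle a,v\rangle>0\}$, and I normalise it onto the affine hyperplane $H:=\{v\in\rset^m:\langle a,v\rangle=1\}\cong\rset^{m-1}$ by
\[
\tilde s_\sA:\cX\to H,\qquad \tilde s_\sA(x):=e(x)^{-1}s_\sA(x).
\]
Because $s_\sA$ is continuous and $e$ is continuous and strictly positive, $\tilde s_\sA$ is continuous.

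Next I carry a given representation into this affine picture. Let $s\in\cS_\sA$; if $s=0$ the zero measure works, so assume $s\neq0$. Any representing measure $\mu=\sum_i c_i\delta_{x_i}$ with $c_i>0$ has total $e$-mass $\lambda:=L_s(e)=\sum_i c_ie(x_i)>0$, and setting $\lambda_i:=c_ie(x_i)/\lambda\ge0$ gives
\[
\frac{s}{\lambda}=\sum_i\lambda_i\,\tilde s_\sA(x_i),\qquad \sum_i\lambda_i=1 ,
\]
so that $s/\lambda\in\conv\tilde A$, where $\tilde A:=\tilde s_\sA(\cX)\subseteq H$. The hypothesis on $\cX$ now enters: if $\cX=\bigcup_{j=1}^{r}\cX_j$ with $r\le m-1$ path-connected pieces, then each image $\tilde s_\sA(\cX_j)$ is path-connected, so $\tilde A$ is a union of at most $m-1$ connected sets and therefore has at most $m-1$ connected components.

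Since $H\cong\rset^{m-1}$ and $\tilde A$ has at most $m-1=\dim H$ connected components, the Fenchel--Bunt theorem applies and represents $s/\lambda\in\conv\tilde A$ as a convex combination of at most $m-1$ points $\tilde s_\sA(y_1),\dots,\tilde s_\sA(y_{m-1})$, say $s/\lambda=\sum_{j=1}^{m-1}\nu_j\tilde s_\sA(y_j)$ with $\nu_j\ge0$ and $\sum_j\nu_j=1$. Undoing the normalisation,
\[
s=\sum_{j=1}^{m-1}\frac{\lambda\nu_j}{e(y_j)}\,s_\sA(y_j),
\]
which, after discarding zero weights and merging coincident atoms, is an $(m-1)$-atomic representing measure of $s$. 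Hence $\cat_\sA(s)\le m-1$ for every $s\in\cS_\sA$, and therefore $\cat_\sA\le m-1$.

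The only real work sits in the Fenchel--Bunt step, where the bound $r\le m-1$ on the number of components is used through a pigeonhole argument. In the path-connected situation here its mechanism is: from an $m$-point convex representation of $s/\lambda$ (if these points are affinely dependent an affine relation drops one immediately, so assume they span a simplex) two of the atoms must lie in a common path-component; moving one along a path towards the other deforms the simplex continuously, and at the last parameter value for which $s/\lambda$ still lies in the moving simplex it sits on the boundary, hence in a proper face spanned by at most $m-1$ atoms. I expect the continuity and closedness bookkeeping for this exit parameter to be the main technical point if one spells out Fenchel--Bunt instead of citing it.
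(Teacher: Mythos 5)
Your proof is correct, and at its core it performs the same reduction as the paper: both normalise by the strictly positive function $e$ from condition (\ref{cond+}) so that the moment curve lands in an affine hyperplane of dimension $m-1$ (the paper passes to $b_j=f_je^{-1}$ and the slice $\rset^{m-1}\times\{1\}$, you rescale the curve to $\tilde{s}_\sA=e^{-1}s_\sA$ on $H=\{\langle a,v\rangle=1\}$, which is the same thing), and both then use the connectivity hypothesis to cut an $m$-point convex representation down to $m-1$ points. The only real difference is where the topological work is done. The paper proves the key step inline: it takes two atoms $x_1,x_2$ lying in a common path-component, moves one along a connecting path $\gamma$, and argues that at some parameter $t_0$ the normalised sequence sits on the boundary of the deformed simplex $\Delta_{t_0}$, hence in a proper face spanned by at most $m-1$ vertices. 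You instead invoke the classical Fenchel--Bunt refinement of Carath\'eodory's theorem applied to $\tilde{A}=\tilde{s}_\sA(\cX)$, whose at most $m-1$ path-connected pieces give at most $m-1$ connected components; your closing sketch of the Fenchel--Bunt mechanism is precisely the paper's deformation argument. Your route buys brevity, modularity, and in fact a marginally weaker hypothesis (at most $m-1$ \emph{connected} components of $\cX$ suffice once Fenchel--Bunt is cited), whereas the paper's version is self-contained. One cosmetic remark: like the paper's own proof, you read condition (\ref{cond+}) as strict positivity $e(x)>0$, although the displayed condition only states $e(x)\geq 0$; this is evidently the intended reading, but it is worth flagging.
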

\begin{proof}
Obviously, the Carath\'eodory number $\cat_{\sA}$ depends only on the linear span $\sA$, but not on the particular basis $\sA$ of $\cA = \Lin\, \sA$. Hence we can assume without loss of generality that $e=f_m$. Since $e(x)>0$ on $\cX$ by assumption, $b_j:=f_j e^{-1}\in C(\cX)$ for $j=1,\dots,m$.  Set ${\sB}=\{b_1,\dots,b_m\}.$

Let $s$ be a moment sequence of $\sB$. First we prove that $s$ has a finitely atomic representing measure of a most $m-1$ atoms. Upon normalization we can assume that $s_m=1$. By Proposition \ref{richtercor}, $s$ has a $k$-atomic measure $\mu=\sum_{j=1}^k c_j\delta_{x_j}$, where $k\leq m$ and $x_j\in \cX$ and $c_j>0$ for all $j$. If $k<m$, we are done, so we can assume that $k=m$.  Since $\cX$ consists of at most $m-1$ path-connected components, it follows that at least two points $x_i$, say $x_1$ and $x_2$, are in  the same component, say $\cX_1$, of $\cX$. Then there is a connecting path $\gamma: [0,1]\rightarrow \cX_1$ such that $\gamma(0) = x_1$ and $\gamma(1) = x_2$. For $t\in [0,1]$ we denote  by $\Delta_t$ the simplex in $\rset^{m-1}\times\{1\}$ spanned by the points $s_{\sB}(x_1),s_{\sB}(\gamma(t)),s_{\sB}(x_3),\dots,s_{\sB}(x_{m}).$ Since $s_m=1$, we have $\sum_{j=1}^m c_j=1$. Hence $s:=(s_1,\dots,s_{m})$ belongs to the convex hull of $s_{\sB}(x_1),s_{\sB}(x_2),s_{\sB}(x_3),\dots,s_{\sB}(x_{m})$, that is, $s$ is in  the simplex $\Delta_1$. By decreasing $t$ to $0$ it follows from the continuity of $b_i$ that there exists a  $t_0\in [0,1]$ such that  $s$ belongs to the boundary of the simplex $\Delta_{t_0}$. Then $s$ is a convex combination of at most $m-1$ vertices. This yields a $k$-representing measure $\tilde{\mu}$ of $s$ with $k\leq m-1$.

Now we show that each moment sequence of $\sA$ has a $k$-atomic representing measure with  $k\leq m-1$. This in turn implies the assertion  $\cat_{\sA}\leq m-1$. Let $s^\prime$ be a moment sequence of $\sA$ and let $\mu^\prime$ be a finitely atomic representing measure of $s^\prime$. Let $s$ be the moment sequence of $\sB$ given by the measure $e(x)d\mu$. As shown in the preceding paragraph, $s$ has a $k$-atomic representing measure $\nu$, where  $k\leq m-1$. Then  $e(x)^{-1}d\nu$ is a $k$-atomic representing measure of $s^\prime$.
\end{proof}

\begin{cor}
Let $\cA=\{ p\in \rset[x_1,\dots,x_n]: \deg (p)\leq d\}$ and $\cX=\rset^n$. Then  
\[\cat_{\sA}\leq |\sA| - 1= \begin{pmatrix}n+d\\ n\end{pmatrix} - 1.\]
\end{cor}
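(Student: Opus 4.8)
The plan is to apply Theorem \ref{worstupperbound} directly, so the only real work is to verify its two hypotheses for the space $\cA=\{p\in\rset[x_1,\dots,x_n]:\deg(p)\leq d\}$ on $\cX=\rset^n$. First I would record the dimension: a basis of $\cA$ is given by the monomials $x^\alpha$ with $|\alpha|\leq d$, and their number is $\binom{n+d}{n}$, so $m=|\sA|=\binom{n+d}{n}$. This is exactly the quantity appearing in the claimed bound, so the statement reduces to showing $\cat_{\sA}\leq m-1$.

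Next I would check condition (\ref{cond+}), namely the existence of $e\in\cA$ with $e(x)\geq 0$ for all $x\in\cX$. This is immediate: the constant function $e\equiv 1$ lies in $\cA$ (take the monomial $x^0$) and is nonnegative everywhere on $\rset^n$. In fact $e=1$ is strictly positive, which is all that is needed. So (\ref{cond+}) holds.

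The remaining hypothesis of Theorem \ref{worstupperbound} asks that $\cX$ have at most $m-1$ path-connected components. Here $\cX=\rset^n$ is itself path-connected (any two points are joined by a straight-line segment), so it has exactly one path-connected component. Since $m=\binom{n+d}{n}\geq 2$ whenever $d\geq 1$ (and the case $\cA=\{1\}$, i.e.\ $m=1$, is excluded from the theorem and is trivial anyway), the inequality $1\leq m-1$ holds, so the hypothesis on the number of components is satisfied.

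With both hypotheses verified, Theorem \ref{worstupperbound} yields $\cat_{\sA}\leq m-1=\binom{n+d}{n}-1$, which is the assertion. The proof is therefore essentially a bookkeeping argument; there is no genuine obstacle, since all the analytic content is already packaged in Theorem \ref{worstupperbound}. The only point deserving a word of caution is the edge case $d=0$, where $m=1$ and the theorem's standing assumption $m\geq 2$ fails; but in that degenerate situation $\cA$ consists only of constants and the bound is vacuous or trivially true, so it may simply be excluded by assuming $d\geq 1$.
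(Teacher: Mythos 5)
Your proposal is correct and matches the paper's (implicit) argument exactly: the corollary is stated immediately after Theorem \ref{worstupperbound} with no separate proof, precisely because it follows by taking $e=1$ for condition (\ref{cond+}), noting that $\cX=\rset^n$ is path-connected, and counting $m=\binom{n+d}{n}$ monomials. Your remark on the degenerate case $d=0$ (where $m=1$ violates the theorem's hypothesis $m\geq 2$) is a sensible extra precaution that the paper leaves tacit.
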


We give two somewhat pathological  examples. Example \ref{exmp:worstcase2} shows that the assertion of Theorem \ref{worstupperbound} is not true  if the assumption on the function $e(x)$ is omitted.


\begin{exa}\label{exmp:worstcase2}
Set
\[\varphi(x) := \begin{cases} x & \text{for}\ x\in [0,1],\\
-x+2 & \text{for}\ x\in (1,2],\\
0 & \text{elsewhere.}\end{cases}\]
$\varphi_1(x):=\varphi(x)$, $\varphi_2(x):=\varphi(x-1)$, $\varphi_3(x):=\varphi(x-2)$. Then ${\sA}:=\{\varphi_1,\varphi_2,\varphi_3\}\subset C(\rset)$. Using  the moment sequence $s=(1,1,1)$ we find that $\cat_{\sA}=3$.
\end{exa}

Example \ref{exmp:spacefilling1} gives a three-dimensional moment cone with  $\cat_\sA = 1$. A slight modification of this idea yields for  $m\in \nset$ an $m$-dimensional space $\cA$  such that  $\cat_\sA = 1$.

\begin{exa}\label{exmp:spacefilling1}
Let $x_L$ and $y_L$ be the coordinate functions of a space filling curve \cite[Ch.\ 5]{saganSpaceFillingCurves}, i.e., $x_L,y_L:[0,1]\rightarrow [0,1]$ are continuous, nowhere differentiable on the Cantor set $\mathcal{C}$, differentiable on $[0,1]\setminus\mathcal{C}$, and the curve
\[(x_L,y_L):[0,1]\rightarrow [0,1]^2\]
is surjective. Set ${\sA}:=\{x_L,y_L,1\}$ and $\cX=[0,1]$. Then
\[s_\sA([0,1]) = [0,1]^2\times\{1\} \tag{$*$}\]
and the moment cone $\cS_\sA = \{(x,y,z) :  z\geq 0,\, 0\leq x\leq z,\, 0\leq y \leq z\}$ is full-dimensional. Clearly,  $(*)$ implies that  $\cat_\sA = 1$.
\end{exa}

\begin{rem}
In this paper the vector space $\cA$ is finite-dimensional. However the definitions of the moment cone and the Carath\'eodory number can be extended to infinite-dimensional vector spaces $\cA$. The following example shows that even in this case it is  possible that $\cat_\sA=1$.
Let $\sA=\{\varphi_n\}_{n\in\nset}$ be the coordinate functions of the $\aleph_0$-dimensional Sch\"onberg space filling curve \cite[Ch.\ 7]{saganSpaceFillingCurves}, i.e., $\varphi_n$ is continuous and nowhere differentiable on $[0,1]$ for all $n$, and set $\varphi_0=1$. Then
\[(\varphi_n)_{n\in\nset_0}: [0,1]\rightarrow \{1\}\times [0,1]^{\nset} \tag{$*$}\]
is surjective. The moment cone $\cS_\sA = \{(x_n)_{n\in\nset_0} : 0\leq x_n\leq x_0\}$ is full dimensional, closed, and $\cat_\sA = 1$ from $\mathrm{(*)}$.
\end{rem}

\begin{thm}\label{thm:LinIndepZeros}
Let $p\in \cA$ and $x_1,\dots,x_k\in \cX, k\in \nset$. Suppose  that $p(x)\geq 0$ for $x\in \cX$, $\cZ(p) = \{x_1,...,x_k\}$ and the set $\{s_\sA(x_i) : i=1,...,k\}$ is linearly independent. Then $\cat_\sA\geq k$.
\end{thm}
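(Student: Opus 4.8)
The plan is to exhibit a single moment sequence whose Carath\'eodory number is at least $k$; since $\cat_\sA$ is the largest value of $\cat_\sA(s)$ over all $s\in\cS_\sA$, this suffices. The natural witness is the moment sequence
\[s:=\sum_{i=1}^k s_\sA(x_i),\]
which is represented by the $k$-atomic measure $\mu=\sum_{i=1}^k\delta_{x_i}$ supported on the zeros of $p$. I would then show that $\mu$ is in fact the \emph{only} representing measure of $s$, so that $\cat_\sA(s)=k$ and hence $\cat_\sA\geq k$.

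The first key step is a support argument. Since each $x_i$ lies in $\cZ(p)$, we have $L_s(p)=\sum_{i=1}^k p(x_i)=0$. Let $\nu$ be any representing measure of $s$. Then $\int_\cX p\,d\nu=L_s(p)=0$, while $p\geq 0$ on $\cX$ and $\nu$ is a positive Radon measure. Because $p$ is continuous, on any compact set disjoint from $\cZ(p)$ one has $p\geq\varepsilon>0$, which forces $\nu$ to vanish there; by inner regularity of $\nu$ this yields $\supp\nu\subseteq\cZ(p)=\{x_1,\dots,x_k\}$. (This is precisely the mechanism behind Proposition \ref{prop:zerosSupport}, here applied to our concrete $p$ rather than to an abstract supporting element of $\Pos(\cA,\cX)$.) Consequently $\nu=\sum_{i=1}^k a_i\delta_{x_i}$ with $a_i=\nu(\{x_i\})\geq 0$.

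The second step invokes the linear independence hypothesis. Writing out the moment identity gives
\[\sum_{i=1}^k a_i\,s_\sA(x_i)=s=\sum_{i=1}^k s_\sA(x_i),\]
and since the vectors $s_\sA(x_1),\dots,s_\sA(x_k)$ are linearly independent, comparing coefficients forces $a_i=1$ for every $i$. Hence $\nu=\mu$ is the unique representing measure of $s$, it has exactly $k$ atoms, and therefore $\cat_\sA(s)=k$. Taking $s$ as a witness, $\cat_\sA\geq\cat_\sA(s)=k$.

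I do not expect a serious obstacle, as the argument is essentially one of uniqueness. The only point requiring genuine care is the support step for \emph{non-compact} $\cX$: one must combine the continuity of $p$ with the inner regularity of the Radon measure $\nu$ to exclude any mass outside the finite set $\cZ(p)$, rather than relying on compactness of $\cX$. Everything else reduces to the linear-algebraic fact that independent vectors admit at most one representation of a given point.
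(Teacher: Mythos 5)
Your proof is correct and follows essentially the same route as the paper's: both use the witness $s=\sum_{i=1}^k s_\sA(x_i)$, the support argument forcing every representing measure onto $\cZ(p)$ (the paper cites Proposition \ref{prop:zerosSupport}, you reprove its mechanism directly), and the linear independence of the $s_\sA(x_i)$ to pin down the coefficients. The only cosmetic difference is that you establish full uniqueness of the representing measure (all $a_i=1$), whereas the paper merely derives a contradiction from assuming some atom is absent; both hinge on the identical linear-algebra step.
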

\begin{proof}
Let $s = \sum_{i=1}^k s_\sA(x_i)$. Clearly, $L_s(p)=0$ and hence $\supp\mu\subseteq\cZ(p)=\{x_1,\dots,x_k\}$ for any representing measure $\mu$ of $s$ by Proposition \ref{prop:zerosSupport}. Assume there is an at most $(k-1)$-atomic representing measure $\mu$. Without loss of generality we assume that $x_1\notin {\supp}\, \mu$, so $\mu$ is of the form $\mu = \sum_{i=2}^{k} c_i \delta_{x_i}$, $c_i\geq 0$. Then
\[0 = s - s = \sum_{i=1}^k s_\sA(x_i) - \sum_{i=2}^{k} c_i s_\sA(x_i) \quad\Rightarrow\quad
s_\sA(x_1) = \sum_{i=2}^k (c_i-1) s_\sA(x_i).\]
Since the set $\{s_\sA(x_i) : i=1,...,k\}$ is linear independent, this is a contradiction. Therefore, $k = \cat_\sA(s) \leq \cat_\sA$.
\end{proof}

Applications of the previous theorem will be given in Examples \ref{exmp:CaratheodoryOnBoundary} and \ref{exmp:HarrisPolynomial}. Deeper results on the connections between the Carath\'eodory number and the zeros of positive polynomials are treated in Section \ref{carzeropos}.

We derive some useful facts which will be used several times. We investigate some properties of the set 
\[\cS_k := \range S_{k,\sA}\]
of  moment sequences which are given by measures of at most $k$ atoms.

\begin{lem}\label{lem:convexSetmink}
For fixed $k\in \nset$ the following are equivalent:
\begin{itemize}
\item[\em (i)] $\cS_k$ is convex, or equivalently, $\cS_k+\cS_k\subseteq \cS_k$.
\item[\em (ii)] $\cS_k = \cS_{k+1}$.
\item[\em (iii)] $k\geq\cat_\sA$.
\end{itemize}
\end{lem}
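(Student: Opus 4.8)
The plan is to first dispose of the equivalence asserted inside (i) and then prove the cycle (i) $\Rightarrow$ (ii) $\Rightarrow$ (iii) $\Rightarrow$ (i). The starting observation is that $\cS_k=\range S_{k,\sA}$ is a \emph{cone}: it contains $0$ (take $C=0$) and is stable under multiplication by nonnegative scalars, since $\lambda\sum_{i=1}^k c_i s_\sA(x_i)=\sum_{i=1}^k(\lambda c_i)s_\sA(x_i)$ with $\lambda c_i\geq 0$ whenever $\lambda\geq 0$. For any cone, convexity is equivalent to being closed under addition: if $\cS_k$ is convex and $s,t\in\cS_k$, then $\tfrac12(s+t)\in\cS_k$ and, scaling by $2$, $s+t\in\cS_k$; conversely, if $\cS_k+\cS_k\subseteq\cS_k$, then for $\lambda\in[0,1]$ both $\lambda s$ and $(1-\lambda)t$ lie in $\cS_k$, hence so does their sum $\lambda s+(1-\lambda)t$. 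This settles the parenthetical claim in (i), and throughout I shall use the additive form $\cS_k+\cS_k\subseteq\cS_k$.

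For (i) $\Rightarrow$ (ii) I would use that $\cS_k\subseteq\cS_{k+1}$ always holds and prove the reverse inclusion: writing $s=\sum_{i=1}^{k+1}c_i s_\sA(x_i)\in\cS_{k+1}$ as the sum of $\sum_{i=1}^{k}c_i s_\sA(x_i)\in\cS_k$ and $c_{k+1}s_\sA(x_{k+1})\in\cS_1\subseteq\cS_k$ (here $k\geq 1$ since $k\in\nset$), additive closure gives $s\in\cS_k$, so $\cS_{k+1}\subseteq\cS_k$ and hence $\cS_k=\cS_{k+1}$. For (iii) $\Rightarrow$ (i), note that $k\geq\cat_\sA$ means, by the very definition of $\cat_\sA$, that every moment sequence has a representing measure with at most $k$ atoms, i.e.\ $\cS_\sA\subseteq\cS_k$; combined with $\cS_k\subseteq\cS_\sA$ this gives $\cS_k=\cS_\sA$, and since $\cS_\sA$ is a convex cone, so is $\cS_k$, which is (i).

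The main work, and the step I expect to be the only nontrivial one, is (ii) $\Rightarrow$ (iii). Here I would first show that a single collapse propagates: assuming $\cS_k=\cS_{k+1}$, I claim $\cS_{k+1}=\cS_{k+2}$. Indeed, given $s=\sum_{i=1}^{k+2}c_i s_\sA(x_i)\in\cS_{k+2}$, the partial sum $\sum_{i=1}^{k+1}c_i s_\sA(x_i)$ lies in $\cS_{k+1}=\cS_k$, hence equals some sequence with at most $k$ atoms, and adding back $c_{k+2}s_\sA(x_{k+2})$ exhibits $s$ as an element of $\cS_{k+1}$. Iterating, $\cS_k=\cS_{k+j}$ for every $j\geq 0$. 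Taking $j$ large enough that $k+j\geq m=\dim\cA$ and invoking the Richter--Tchakaloff Theorem (Proposition \ref{richtercor}), which gives $\cS_\sA=\cS_m=\cS_{k+j}$, we conclude $\cS_k=\cS_{k+j}=\cS_\sA$. Thus every moment sequence is $k$-atomic, so by definition $\cat_\sA\leq k$, i.e.\ (iii) holds. The only point requiring care is the bookkeeping of atom counts in the collapse step: some coefficients $c_i$ may vanish, so what is actually propagated is ``at most $k$ atoms'' rather than ``exactly $k$''; but since $\cS_k$ by definition collects measures of \emph{at most} $k$ atoms, this causes no difficulty.
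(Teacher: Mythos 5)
Your proof is correct and follows essentially the same route as the paper: the cycle (i)\,$\Rightarrow$\,(ii)\,$\Rightarrow$\,(iii)\,$\Rightarrow$\,(i), with the key step (ii)\,$\Rightarrow$\,(iii) done by absorbing atoms one at a time via the collapse $\cS_k=\cS_{k+1}$ and invoking the Richter--Tchakaloff Theorem (Proposition~\ref{richtercor}) to reduce to finitely atomic sequences. Your packaging of that step as ``the collapse propagates to all $\cS_{k+j}$'' and your explicit verification of the cone/convexity equivalence in (i) are just slightly more careful bookkeeping of what the paper's iterative argument does implicitly.
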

\begin{proof}
(i)$\Rightarrow$(ii): Let $s = (1-\lambda) s_1 + \lambda s_\sA(x)\in\cS_{k+1}$ with $s_1,s_\sA(x)\in\cS_k$. Since $\cS_k$ is convex , $s\in\cS_k$. Hence $\cS_k=\cS_{k+1}$.

(ii)$\Rightarrow$(iii): Let $s = s_0 + \lambda_1 s_\sA(x_1) + ... + \lambda_l s_\sA(x_l)\in\cS_{k+l}$ be an arbitrary moment sequence. Set $s_i := s_0 + \lambda_1 s_\sA(x_1) + ... + \lambda_l s_\sA(x_i)$. Then
\begin{align*}
s_1 = s_0 + \lambda_1 s_\sA(x_1)\in\cS_{k+1}=\cS_k \;&\Rightarrow\; s_2 = s_1 + \lambda_2 s_\sA(x_2)\in\cS_{k+1}=\cS_k\\
&\;\;\vdots\\
&\Rightarrow s = s_{l-1} + \lambda_l s_\sA(x_l)\cS_{k+1}=\cS_k
\end{align*}
Thus $\cat_{\sA}\leq k$.

(iii)$\Rightarrow$(ii): Since $\cat_\sA\leq k$, we have $\cS_{\cat_\sA} \subseteq \cS_k \subseteq \cS_{\cat_\sA}$. Here the last inclusion follows from the mimimality of $\cat_\sA$. Hence, $\cS_k=\cS_{\cat_\sA}$ is convex.
\end{proof}

An immediate consequence of the preceding lemma are the following inclusions:
\begin{align}\label{stricticl}
\{0\}=\cS_0 \subsetneqq \cS_1\subsetneqq ... \subsetneqq \cS_{\cat_\sA} = \cS_{\cat_\sA+j},\quad j\in \nset.
\end{align}

\begin{prop}
\begin{itemize}
\item[\em (i)] $\cat_\sA = \min\{k: \cS_k\ \text{is convex}\} = \min\{k : \cS_k = \cS_{k+1}\}$.
\item[\em (ii)] For each $k=0,1,...,\cat_\sA$ there is a moment sequence $s$ such that $\cat_\sA(s)=k$.
\end{itemize}
\end{prop}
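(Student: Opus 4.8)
The plan is to derive both parts directly from Lemma \ref{lem:convexSetmink} and the strict inclusions \eqref{stricticl}, since the essential content has already been established there. The single bridging observation I would record at the outset is the defining equivalence
\[
s \in \cS_k \quad\Longleftrightarrow\quad \cat_\sA(s) \leq k,
\]
which is immediate from the definitions: $\cS_k = \range S_{k,\sA}$ consists precisely of those moment sequences admitting a representing measure with at most $k$ atoms (some weights may vanish), while $\cat_\sA(s)$ is by definition the least such $k$.

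For part (i), I would simply read off Lemma \ref{lem:convexSetmink}. For each fixed $k$ the three conditions ``$\cS_k$ is convex'', ``$\cS_k = \cS_{k+1}$'', and ``$k \geq \cat_\sA$'' are equivalent, so the three index sets
\[
\{k : \cS_k \text{ is convex}\},\qquad \{k : \cS_k = \cS_{k+1}\},\qquad \{k : k \geq \cat_\sA\}
\]
coincide, each being equal to $\{\cat_\sA, \cat_\sA+1,\dots\}$. Taking minima yields $\min\{k : \cS_k \text{ convex}\} = \min\{k : \cS_k = \cS_{k+1}\} = \cat_\sA$, which is exactly the assertion of (i).

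For part (ii), I would use the chain \eqref{stricticl}. The value $k=0$ is realised by $s=0$, whose only representing measure is the zero ($0$-atomic) measure, so $\cat_\sA(0)=0$. For $1 \leq k \leq \cat_\sA$ the inclusion $\cS_{k-1} \subsetneqq \cS_k$ is strict, so I may pick a moment sequence $s \in \cS_k \setminus \cS_{k-1}$. By the bridging equivalence, $s \in \cS_k$ gives $\cat_\sA(s) \leq k$, while $s \notin \cS_{k-1}$ gives $\cat_\sA(s) > k-1$; hence $\cat_\sA(s) = k$. This produces, for every $k \in \{0,1,\dots,\cat_\sA\}$, a moment sequence whose Carath\'eodory number is exactly $k$.

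I expect no serious obstacle here: the real work was already done in Lemma \ref{lem:convexSetmink}, and once the equivalence $s \in \cS_k \Leftrightarrow \cat_\sA(s) \le k$ is in hand both statements follow at once. The only point deserving a moment's care is verifying that the inclusions in \eqref{stricticl} are genuinely strict for all indices below $\cat_\sA$, so that a witness $s \in \cS_k \setminus \cS_{k-1}$ actually exists; but this too is part of the content of \eqref{stricticl}, resulting from the ``only if'' direction of the equivalence (ii)$\Leftrightarrow$(iii) in Lemma \ref{lem:convexSetmink} together with the trivial inclusion $\cS_k \subseteq \cS_{k+1}$.
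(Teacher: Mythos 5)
Your proof is correct and takes essentially the same route as the paper: part (i) is read off from the equivalences of Lemma \ref{lem:convexSetmink}, and part (ii) from the strict inclusions (\ref{stricticl}), where the paper handles $k=0$ uniformly by setting $\cS_{-1}:=\emptyset$. The only blemish is your claim that the zero measure is the \emph{only} representing measure of $s=0$ (false if the $f_j$ have a common zero), but it is immaterial: $\cat_\sA(0)=0$ already follows from the zero measure being $0$-atomic.
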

\begin{proof}
(i) follows at once from the minimality of $\cat_\sA$ in Lemma \ref{lem:convexSetmink}.

(ii): By (\ref{stricticl}),  we have $\cS_{k-1}\subsetneqq\cS_k$ for  $k=0,...,\cat_\sA$, where we set  $\cS_{-1}:=\emptyset$. Therefore, $\cS_{k} \setminus \cS_{k-1} \neq \emptyset$.
\end{proof}

\begin{prop}\label{limitcar} Suppose that condition (\ref{cond+}) is satisfied.
\begin{itemize}
\item[\em (i)] The cone $\cS$ is pointed, that is, $\cS\cap (-\cS)=\{0\}.$
\item[\em (ii)] If $\cS_1$ is closed, then $\cS_k$ is closed for all $k$.

\item[\em (iii)] If  the set $\cX$ is compact, then   $\cS_k$ is closed for all $k$.
\end{itemize}
\end{prop}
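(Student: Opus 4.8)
The plan is to extract from condition (\ref{cond+}) a single linear functional that is strictly positive on the ray cone $\cS_1$ and to use it as the common engine for all three parts. Writing $e=\sum_{j=1}^m a_j f_j$ in the basis $\sA$ and setting $\ell(y):=\sum_{j=1}^m a_j y_j$ for $y\in\rset^m$, one gets $\ell(s_\sA(x))=e(x)>0$ for every $x\in\cX$; in particular $s_\sA(x)\neq 0$, so $\cS_1\setminus\{0\}=\{c\,s_\sA(x):c>0,\ x\in\cX\}$ and $\ell(t)>0$ for every $t\in\cS_1\setminus\{0\}$. I will also use the elementary identity $\cS_k=\underbrace{\cS_1+\dots+\cS_1}_{k}$ (Minkowski sum, since an at most $k$-atomic measure splits into $k$ measures with at most one atom each, and each summand $c_i\,s_\sA(x_i)$ lies in $\cS_1$).

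For (i), let $s\in\cS\cap(-\cS)$ and choose representing measures $\mu$ of $s$ and $\nu$ of $-s$. Then $\int f\,d\mu=L_s(f)$ and $\int f\,d\nu=L_{-s}(f)=-L_s(f)$ for all $f\in\cA$, so $\int f\,d(\mu+\nu)=0$; taking $f=e$ gives $\int e\,d(\mu+\nu)=0$ with $e>0$ and $\mu+\nu\geq 0$, which forces $\mu+\nu=0$, hence $\mu=\nu=0$ and $s=0$.

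For (ii), assume $\cS_1$ is closed and take $s^{(n)}\to s^\ast$ with $s^{(n)}=\sum_{i=1}^k t_i^{(n)}$, $t_i^{(n)}\in\cS_1$. Applying $\ell$ yields $\sum_{i=1}^k\ell(t_i^{(n)})=\ell(s^{(n)})\to\ell(s^\ast)$, and since every term is nonnegative, each $\ell(t_i^{(n)})$ is bounded by some $M$. The key step is that the slice $\cS_1\cap\{\ell\leq M\}$ is bounded: otherwise there are $u_j\in\cS_1$ with $\ell(u_j)\leq M$ and $\|u_j\|\to\infty$; normalizing $v_j:=u_j/\|u_j\|\in\cS_1$ and passing to a subsequence $v_j\to v^\ast$ on the unit sphere, closedness of $\cS_1$ gives $v^\ast\in\cS_1\setminus\{0\}$ while $\ell(v^\ast)=\lim_j\ell(u_j)/\|u_j\|=0$, contradicting strict positivity of $\ell$ on $\cS_1\setminus\{0\}$. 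Hence all $t_i^{(n)}$ lie in one fixed bounded set; passing to subsequences gives $t_i^{(n)}\to t_i^\ast$, with $t_i^\ast\in\cS_1$ again by closedness, whence $s^\ast=\sum_{i=1}^k t_i^\ast\in\cS_k$.

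For (iii), by (ii) it suffices to show $\cS_1$ is closed when $\cX$ is compact. Then $K:=s_\sA(\cX)$ is compact and, as noted above, $0\notin K$, so $\cS_1=\{c\,v:c\geq 0,\ v\in K\}$ is the cone generated by a compact set avoiding the origin; a standard argument (bound the scalars via $\inf_{v\in K}\|v\|>0$ and extract a convergent subsequence of the pairs $(c,v)$) shows this cone is closed, and (ii) then finishes the proof. I expect the main obstacle to be the boundedness of the slice in (ii): this is precisely where both the closedness of $\cS_1$ and the pointedness furnished by (\ref{cond+}) are indispensable, since without them the masses of the atoms could blow up, or the atoms escape to infinity, while the total sum stays convergent.
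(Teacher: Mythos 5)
Your proof is correct, and while parts (i) and (iii) essentially coincide with the paper's own arguments (integrating $e$ against $\mu+\nu$ to kill $s$; the cone over the compact set $s_\sA(\cX)$, which avoids the origin, is closed), your part (ii) takes a genuinely different route. The paper proves (ii) by induction on $k$: it writes $s_n=\alpha_n x_n+\beta_n y_n$ with \emph{normalized} $x_n\in\cS_k$, $y_n\in\cS_1$, uses compactness of $\cS_k\cap B_1(0)$ and $\cS_1\cap B_1(0)$ to extract limits of the directions, and then rules out unbounded coefficients by an orthogonal-decomposition estimate showing that $(\alpha_{n_i})$ and $(\beta_{n_i})$ can only blow up together, which forces $y=-x$ and contradicts pointedness from (i). You instead convert condition (\ref{cond+}) into the linear functional $\ell$ dual to $e$, which is nonnegative on $\cS_1$ and strictly positive on $\cS_1\setminus\{0\}$, prove that every slice $\cS_1\cap\{\ell\leq M\}$ is bounded (using closedness of $\cS_1$), and then handle all $k$ summands at once via the Minkowski-sum identity $\cS_k=\cS_1+\cdots+\cS_1$. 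What your approach buys: no induction, no orthogonality computation, and no detour through (i) --- strict positivity of $\ell$ on $\cS_1\setminus\{0\}$ plays exactly the role that pointedness plays in the paper, and the slice-compactness statement is a reusable strengthening. What the paper's approach buys: it makes explicit that closedness propagates from $\cS_1$ and $\cS_k$ to $\cS_{k+1}$ using only pointedness of the cone, without needing to name a strictly positive functional. One harmless remark: condition (\ref{cond+}) as displayed says $e(x)\geq 0$, but the paper's own proofs (including of this proposition) use $e(x)>0$; your reading agrees with theirs.
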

\begin{proof}
(i): Suppose that $s,-s\in \cS$. Using that $e(x)>0$ on $\cX$ we conclude that $L_s(e)\geq 0$ and $L_{-s}(e)=-L_s(e)\geq 0$, so  $L_s(e)=0$  and therefore  $s=0$.

(ii):  The proof follows by induction. Assume $\cS_1$ and $\cS_k$ is closed for some $k$. We show that  $\cS_{k+1}$ is also closed.

Let $(s_n)_{n\in \nset}$ be a sequence of $\cS_{k+1}$  such that $s_n\rightarrow s \in\overline{\cS_{k+1}}$. We can write $s_n = \alpha_n x_n + \beta_n y_n$ such that $x_n\in \cS_k, y_n\in \cS_1$, $\alpha_n,\beta_n\in [0,+\infty)$, and $\|x_n\|=\|y_n\|=1$ for all $n$. Since $\cS_k$ and $\cS_1$ are closed, the sets $\cS_k\cap B_1(0)$ and $\cS_1\cap B_1(0)$ are both  compact. 
Hence we can find a subsequence $(n_i)$ such that 
$x_{n_i}\rightarrow x\in\cS_k\cap B_1(0),$ and $y_{n_i}\rightarrow y\in\cS_1\cap B_1(0)$. Let us assume for a moment that the sequences $(\alpha_{n_i})$ and $(\beta_{n_i})$ are bounded. There is a subsequence $n_{i_j}$ such that  $\alpha_{n_{i_j}}\to \alpha \in[0,+\infty)$ and $\beta_{n_{i_j}}\to \beta \in[0,+\infty)$. Then  $s_{n_{i_j}} \rightarrow s = \alpha x + \beta y \in\cS_{k+1}$.  Thus, $\cS_{k+1}$ is closed.

We show that the sequence $(\beta_{n_i})$ is unbounded if $(\alpha_{n_i})$ is unbounded. Taking the standard scalar product $\langle\,\cdot\, ,\,\cdot\rangle$ in $\rset^m$,  we can uniquely write $y_n = y_n^\perp + y_n^\|$ with $x_n\,\|\,y_n^\|$,\, $x_n\perp y_n^\perp$. Then
\[ \|s_{n_i}\|^2 = \|\alpha_{n_i} x_{n_i} + \beta_{n_i} y_{n_i}^\| \|^2 = \|\alpha_{n_i} x_{n_i} + \beta_{n_i} y_{n_i}^\| \|^2 + \beta_{n_i}^2 \|y_{n_i}^\perp\|^2 \geq \beta_{n_i}^2 \|y_{n_i}^\perp\|^2.\]
Since $(s_{n_i})$ converges, the sequence $(\|s_{n_i}\|)$ is bounded by some $k$. Thus,
\[ k\geq \|\alpha_{n_i} x_{n_i} + \beta_{n_i} y_{n_i}^\| \| \geq | \alpha_{n_i} \|x_{n_i}\| - \beta_{n_i} \|y_{n_i}^\| \| | = | \alpha_{n_i} - \beta_{n_i} \|y_{n_i}^\| \| |\]
and if $(\alpha_{n_i})$ is unbounded, so $(\beta_{n_i})$ is unbounded. The same reasoning shows that  $(\alpha_{n_i})$ is unbounded if $(\beta_{n_i})$ is unbounded.

If the sequence $(\beta_{n_i})$ is unbounded, $(y_{n_i}^\perp )$ converges to $0$ and hence $y=-x$. Since $\cS$ is pointed by (i), this implies $x=y=0$, a contradiction to $\|x\|=\|y\|=1$. This completes the proof.

(iii): By (ii) it suffices to prove that $\cS_1$ is closed.  Clearly, condition (\ref{cond+}) implies that $s_\sA(x)\neq 0$ for all $x\in\cX$. Since $\sA\subseteq C(\cX,\rset)$ and  $\cX$ is compact, we have  $\|s_\sA\|^{-1} s_\sA\in C(\cX,S^{m-1})$ ($S^{m-1}$ denotes the unit sphere in $\rset^m$) and $\range \|s_\sA\|^{-1} s_\sA$ is closed. Hence, $\cS_1 \equiv \rset_{\geq 0}\cdot \range \|s_\sA\|^{-1} s_\sA$ is closed.
\end{proof}

More on the moment cone can be found in Proposition \ref{prop:moreOnMomentCone}.

%

\section{Caratheodory Numbers: Differentiable Functions}\label{difffunction}

In the rest of this paper we assume that $\cX=\rset^n$ or $\pset(\rset^n)$ and $\cA$ is a finite-dimensional linear subspace of $C^r(\rset^n;\rset)$, $r\in\nset$.

Clearly, $S_{k,\sA}$ in Definition \ref{def:sASA} is a $C^r$-map of $\rset_{\geq 0}^k\times \rset^{kn}$ into $\rset^m$. Let $DS_{k,\sA}$ denote its total derivative. We can write
\begin{equation}\begin{split}\label{eq:totalderivative}
DS_{k,\sA} &= (\partial_{c_1}S_{k,{\sA}},\partial_{x_1^{(1)}}S_{k,{\sA}},...,\partial_{x_1^{(n)}}S_{k,{\sA}},\partial_{c_2}S_{k,{\sA}},...,\partial_{x_k^{(n)}}S_{k,{\sA}})\\
&= (s_{\sA}(x_1), c_1\partial_1 s_{\sA}|_{x=x_1},...,c_1\partial_n s_{\sA}|_{x=x_1},s_{\sA}(x_2),...,c_k \partial_n s_{\sA}|_{x=x_k}).
\end{split}\end{equation}

The following number is crucial in what follows.

\begin{dfn}\label{NAdefinition}
\begin{equation}
\cN_{\sA} := \min\{k\in\nset : \rank DS_{k,{\sA}} = m\},
\end{equation}
i.e., $\cN_{\sA}$ is the smallest number $k$ of atoms such that $DS_{k,{\sA}}$ has full rank $m=|\sA|$.
\end{dfn}

A lower bound for $\cN_{\sA}$ is given by the following proposition.

\begin{prop}\label{NAlowerbound}
We have $\left\lceil \frac{|{\sA}|}{n+1}\right\rceil \leq \cN_{\sA}$. If all functions $f_i$ are homogeneous of the same degree, then  $\left\lceil \frac{|{\sA}|}{n}\right\rceil \leq \cN_{\sA}$.
\end{prop}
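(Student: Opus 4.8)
The plan is to prove both inequalities by a pure dimension count on the columns of the total derivative, using its explicit form (\ref{eq:totalderivative}). Observe first that the domain $(\rset_{\geq 0})^k\times\cX^k$ of $S_{k,\sA}$ has dimension $k+kn=k(n+1)$, so $DS_{k,\sA}$ is an $m\times k(n+1)$ matrix whose columns split into $k$ blocks, the $i$-th block being the $n+1$ columns $s_\sA(x_i),\, c_i\partial_1 s_\sA|_{x=x_i},\dots,c_i\partial_n s_\sA|_{x=x_i}$. Since the rank of any matrix is bounded by its number of columns, we always have $\rank DS_{k,\sA}\leq k(n+1)$. Hence the equality $\rank DS_{k,\sA}=m$ can hold only if $m\leq k(n+1)$, i.e.\ $k\geq \frac{m}{n+1}$; as $k$ is an integer this forces $k\geq\left\lceil\frac{m}{n+1}\right\rceil$, and taking the smallest such $k$ gives $\cN_\sA\geq\left\lceil\frac{m}{n+1}\right\rceil$.

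For the homogeneous case I would improve the per-block contribution from $n+1$ to $n$ by means of Euler's identity. If every $f_i$ is homogeneous of the same degree $d$, then each component, and hence the vector-valued map $s_\sA$, satisfies $\sum_{j=1}^n x^{(j)}\,\partial_j s_\sA(x)=d\cdot s_\sA(x)$. Multiplying by $c_i$ and evaluating at $x_i$ produces the linear relation $\sum_{j=1}^n x_i^{(j)}\big(c_i\partial_j s_\sA|_{x=x_i}\big)-c_i d\cdot s_\sA(x_i)=0$ among the $n+1$ columns of the $i$-th block. When $x_i\neq 0$ this relation is nontrivial, so the block spans a space of dimension at most $n$; when $x_i=0$ (and $d>0$) homogeneity gives $s_\sA(0)=0$, so the block again spans at most $n$ dimensions. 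Thus $\rank DS_{k,\sA}\leq kn$, and the same integer argument as before yields $\cN_\sA\geq\left\lceil\frac{m}{n}\right\rceil$. The degenerate case $d=0$ forces all $f_i$ to be constant, whence $m=1$ and the bound is trivial.

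The step requiring care, and the only real subtlety, is to make this column-dependence argument uniform over the whole domain rather than at a single convenient point, since the definition of $\cN_\sA$ refers to $\rank DS_{k,\sA}$ and the weights $c_i$ are only assumed nonnegative. The point is that the displayed Euler relation is a genuine identity of columns valid at every $(C,X)\in(\rset_{\geq 0})^k\times\cX^k$, so the estimate $\rank DS_{k,\sA}\leq kn$ holds pointwise; I therefore never need to exclude the degenerate loci $c_i=0$ or $x_i=0$, each of which is covered by the two cases above. Once the pointwise rank bounds $k(n+1)$ and $kn$ are in hand, both conclusions follow immediately from the minimality in the definition of $\cN_\sA$.
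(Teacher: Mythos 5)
Your proof is correct and takes essentially the same approach as the paper: the first bound is the identical column count, and your Euler-identity relation among the columns of each atom's block is precisely the differentiated form of the scaling relation $s_{\sA}(\lambda x)=\lambda^r s_{\sA}(x)$ that the paper uses to cut each atom's rank contribution from $n+1$ to $n$. Your version is somewhat more careful (explicitly handling the degenerate loci $x_i=0$, $c_i=0$ and $d=0$, which the paper glosses over), but the underlying argument is the same.
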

\begin{proof}
Since $DS_{k,{\sA}}$ has $|{\sA}|$ rows and each atom contributes $n+1$ columns, we need at least $k \geq \frac{|{\sA}|}{n+1}$ atoms for full rank. Thus,  $\cN_{\sA}\geq \left\lceil \frac{|{\sA}|}{n+1}\right\rceil.$

If all functions $f_i$ are homogeneous of degree $r$, then $f_i(\lambda x) = \lambda^r f_i(x)$ and so $\delta_{\lambda x} = \lambda^r \delta_x$. Hence $DS_{1,{\sA}}$ has rank at most $d$ and  kernel dimension at least $1$. Therefore, at least $k \geq \frac{|{\sA}|}{n}$ atoms are needed, so that $\cN_{\sA}\geq \left\lceil \frac{|{\sA}|}{n}\right\rceil.$
\end{proof}

\begin{exa}\label{exmp:worstcase}
Let $\varphi\in C^\infty_0(\rset),  \varphi\neq 0,$ and $\supp(\varphi)\subseteq (0,1)$. Set $\varphi_i(x):=\varphi(x-i+1)$ for $i=1,...,m\in\nset$ and ${\sA} := \{\varphi_1,...,\varphi_m\}$. Then $\partial s_{\sA}(x) = \varphi_i'(x) e_i$ for $x\in (i-1,i)$ or $0$ otherwise. Then $\cN_{\sA}= \cat_{\sA} = m.$
\end{exa}

\begin{thm}\label{thm:signedCaraUpperBound}
Suppose that $\sA\subseteq C^1(\rset^n,\rset)$. Then
%
\begin{equation}
\cat_{\sA,\pm}\leq 2\cN_\sA.
\end{equation}
Set $C=(1,...,1)\in\rset^{\cN_\sA}$.
There exists  $X\in\rset^{\cN_\sA\cdot n}$ and an open neighborhood $U$ of $(C,X)$ such that for every $\varepsilon >0$ there are  $(C_\varepsilon,X_\varepsilon) \in U$ and $\lambda_\varepsilon\in \rset$ such that
\begin{equation}\label{eq:pmCaraLinComb}
s = \lambda_\varepsilon (S_{\sA,\cN_\sA}(C,X) - S_{\sA,\cN_\sA}(C_\varepsilon,X_\varepsilon)).
\end{equation}
\end{thm}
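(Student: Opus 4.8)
The plan is to exploit that at a configuration where the Jacobian $DS_{k,\sA}$ attains full rank $m$ the map $S_{k,\sA}$ is a local submersion, hence locally open, after first normalizing the coefficient vector to $C=(1,\dots,1)$. By Definition \ref{NAdefinition} there is a pair $(C_0,X_0)$ with $k:=\cN_\sA$ atoms and $\rank DS_{k,\sA}(C_0,X_0)=m$. Inspecting (\ref{eq:totalderivative}), the columns attached to the $i$-th atom are $s_\sA(x_i)$ together with $c_i\,\partial_j s_\sA|_{x_i}$; multiplying a single coordinate $c_i$ by a nonzero scalar only rescales the latter columns and therefore leaves the column span, hence the rank, unchanged. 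Since full rank is an open condition, any vanishing coordinate of $C_0$ can first be moved to a small nonzero value without losing full rank, and afterwards each coordinate can be rescaled to $1$. This produces $X\in\rset^{kn}$ with $\rank DS_{k,\sA}(C,X)=m$ for $C=(1,\dots,1)$.

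Next I would apply the submersion theorem. Because $DS_{k,\sA}(C,X)$ has full rank $m$, the $C^1$-map $S_{k,\sA}$ is a submersion at $(C,X)$, so there is an open neighborhood $U$ of $(C,X)$, which we may shrink so that all coordinates of the coefficient vectors occurring in $U$ stay positive, for which $V:=S_{k,\sA}(U)$ is an open neighborhood of $s_0:=S_{k,\sA}(C,X)$ and every point of $V$ has a preimage in $U$.

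Finally I would carry out the construction and count atoms. Fix an arbitrary $s\in\rset^m$. For $\varepsilon>0$ small enough the point $s_0-\varepsilon s$ lies in the open set $V$, so there is $(C_\varepsilon,X_\varepsilon)\in U$ with $S_{k,\sA}(C_\varepsilon,X_\varepsilon)=s_0-\varepsilon s$; putting $\lambda_\varepsilon:=\varepsilon^{-1}$ gives $s=\lambda_\varepsilon\bigl(S_{k,\sA}(C,X)-S_{k,\sA}(C_\varepsilon,X_\varepsilon)\bigr)$, which is exactly (\ref{eq:pmCaraLinComb}). Since $C=(1,\dots,1)$ and the coordinates of $C_\varepsilon$ are positive, both $\lambda_\varepsilon S_{k,\sA}(C,X)$ and $\lambda_\varepsilon S_{k,\sA}(C_\varepsilon,X_\varepsilon)$ are moment sequences of $k$-atomic positive measures; their difference is a signed representing measure of $s$ with at most $2k$ atoms. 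Hence $\cat_{\sA,\pm}(s)\leq 2k=2\cN_\sA$ for every $s\in\rset^m$, which yields $\cat_{\sA,\pm}\leq 2\cN_\sA$.

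The main obstacle is the normalization step together with the bookkeeping around positivity: one must ensure that a full-rank configuration can be taken with coefficient vector $(1,\dots,1)$, and that the perturbed coefficients $C_\varepsilon$ remain positive so that the two pieces really are positive moment sequences and their difference is an honest signed $2k$-atomic measure. The submersion / open-mapping step is then standard once such a full-rank point is in hand.
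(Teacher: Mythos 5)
Your proposal is correct and follows essentially the same route as the paper's proof: locate a full-rank configuration for $DS_{\cN_\sA,\sA}$, normalize $C=(1,\dots,1)$ by column scaling, use openness of the image near that point (the paper phrases this via continuity of the determinant, you via the submersion theorem), and realize an arbitrary $s\in\rset^m$ as a scaled difference of two nearby $\cN_\sA$-atomic moment sequences. Your extra care with perturbing vanishing coordinates of $C_0$ and keeping $C_\varepsilon$ positive is a harmless refinement (positivity is not even needed, since signed atomic measures allow negative weights), and does not change the substance of the argument.
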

\begin{proof}
It clearly suffices to prove the second part of the theorem. The first assertion follows then from the second.

Since $DS_{\cN_\sA}$ has full rank, there is a $(C,X)\in\rset_{> 0}^{\cN_\sA}\times\rset^{\cN_\sA n}$ such that $DS_{\cN_\sA}(C,X)$ has full rank. Since scaling the columns of $DS_{\cN_\sA}(C,X)$ does not change the rank, we can assume without loss of generality that $C=(1,...,1)$. Since the determinant is continuous there is an open neighborhood $U$ of $(C,X)$ such that
\[S_{\cN_\sA,\sA}(C,X) \in \text{int}\, S_{\cN_\sA,\sA}(U).\tag{$*$}\]
Let $s\in\rset^m$. By ($*$) there is a $(C_\varepsilon,X_\varepsilon)\in U$ such that $S_{\sA,\cN_\sA}(C,X) - S_{\sA,\cN_\sA}(C_\varepsilon,X_\varepsilon)$ is a multiple of $s$, i.e., (\ref{eq:pmCaraLinComb}) holds for some  $\lambda_\varepsilon\in\rset$.
\end{proof}

\begin{dfn}\label{defi:singularregular}
Let $n,k\in\nset$ with $k\geq\cN_\sA$. A $k$-atomic measure $(C,X)$ on $\rset^n$ is called \emph{regular (for $S_{k,\sA}$)} iff $DS_{k,\sA}(C,X)$ has full rank. Otherwise the measure $(C,X)$ is called \emph{singular (for $S_{k,\sA}$)}.

A real sequence $s=(s_\alpha)_{\alpha\in \sA}$ is called \emph{regular} iff $S_{k,\sA}^{-1}(s)$ is empty (that is, $s$ is not a moment sequence) or consists solely of regular measures. Otherwise, $s$ is called \emph{singular}.
\end{dfn}

\begin{thm}\label{densethm}
Suppose that ${\sA}\subset C^r(\rset^n;\rset)$ and  $r > \cN_{\sA}\cdot (n+1) - m$. Then \begin{equation}\label{eq:NAlessequalCA}
\cN_{\sA} \leq \cat_{\sA}.
\end{equation}
Further, the set of moment sequences $s$ which can be represented by less than $\cN_{\sA}$ atoms has $|{\sA}|$-dimensional Lebesgue measure zero in $\rset^m$.
\end{thm}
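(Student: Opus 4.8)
The plan is to prove the two assertions in reverse order: first establish the measure-zero statement, and then deduce the inequality $\cN_\sA \leq \cat_\sA$ as a short consequence. The central tool is Sard's theorem applied to the map $S_{k,\sA}$ for values $k < \cN_\sA$, and the hypothesis $r > \cN_\sA(n+1) - m$ is exactly what supplies enough differentiability for Sard's theorem to apply.

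For the measure-zero claim, I would first note that, since the sets are nested, $\cS_0\subseteq\cS_1\subseteq\cdots$ (see \eqref{stricticl}), the set of moment sequences representable by fewer than $\cN_\sA$ atoms is precisely $\cS_{\cN_\sA - 1} = \range S_{\cN_\sA - 1, \sA}$. Put $k = \cN_\sA - 1$. By the definition of $\cN_\sA$ (Definition \ref{NAdefinition}), for this $k < \cN_\sA$ one has $\rank DS_{k,\sA}(C,X) < m$ at \emph{every} point $(C,X)$, because $\cN_\sA$ is the smallest number of atoms at which full rank $m$ is attained anywhere. I would then extend $S_{k,\sA}$ to the open set $\rset^k \times \rset^{kn}$, since the defining formula $\sum_{i=1}^k c_i s_\sA(x_i)$ makes sense for all real $c_i$; the range over the original domain $(\rset_{\geq 0})^k \times \rset^{kn}$ is contained in the range over this open set, so it suffices to bound the latter. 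On the open set every point is a critical point of the $C^r$ map $S_{k,\sA}$, hence every value is a critical value. Sard's theorem then yields that $\range S_{k,\sA}$ has $m$-dimensional Lebesgue measure zero in $\rset^m$, provided $r \geq \max\{1,\, p - m + 1\}$, where $p = k(n+1) = (\cN_\sA - 1)(n+1)$ is the dimension of the source. Since
\[(\cN_\sA - 1)(n+1) - m + 1 = \cN_\sA(n+1) - m - n \leq \cN_\sA(n+1) - m < r,\]
the hypothesis furnishes the required regularity, and the second assertion follows.

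For the inequality, recall that $\rset^m = \cS_\sA - \cS_\sA$, so the convex cone $\cS_\sA$ spans $\rset^m$ and is therefore full-dimensional with nonempty interior; in particular it has positive Lebesgue measure. Suppose, for contradiction, that $\cat_\sA < \cN_\sA$. Then every moment sequence has a representing measure with at most $\cat_\sA \leq \cN_\sA - 1$ atoms, so $\cS_\sA = \cS_{\cat_\sA} \subseteq \cS_{\cN_\sA - 1}$. But $\cS_{\cN_\sA - 1}$ has measure zero by the first part, contradicting that $\cS_\sA$ has positive measure. Hence $\cN_\sA \leq \cat_\sA$.

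The main obstacle is the careful bookkeeping in the Sard estimate: one must confirm that the source dimension $(\cN_\sA - 1)(n+1)$ together with the target dimension $m$ produces a differentiability demand that $r > \cN_\sA(n+1) - m$ meets, and one must handle the fact that the constraint $C \in (\rset_{\geq 0})^k$ defines a domain with boundary, which is dealt with by passing to the open set $\rset^k \times \rset^{kn}$ as above. A secondary point to verify is the standard fact that a convex cone spanning $\rset^m$ has nonempty interior, which underlies the measure comparison in the final step.
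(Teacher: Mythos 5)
Your proposal is correct and follows essentially the same route as the paper's proof: apply Sard's theorem to $S_{k,\sA}$ for $k<\cN_\sA$ (where every point is critical by the definition of $\cN_\sA$) to get the measure-zero statement, then derive $\cN_\sA\leq\cat_\sA$ by contradiction using that the moment cone has nonempty interior since $\rset^m=\cS_\sA-\cS_\sA$. You merely supply details the paper leaves implicit, namely the explicit differentiability count $r>\cN_\sA(n+1)-m\geq (\cN_\sA-1)(n+1)-m+1$ and the extension from the orthant to an open domain, both of which check out.
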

\begin{proof}
By Proposition \ref{NAlowerbound} we have $r > \cN_{\sA}\cdot (n+1) - m \geq 0$, so that  $r \geq 1$.

The moment sequences which can be represented by less than $\cN_{\sA}$ atoms are singular values. Hence  the second assertion follows from Sard's Theorem \cite{sard42}.

To prove (\ref{eq:NAlessequalCA}) assume to the contrary that $\cat_{\sA} < \cN_{\sA}$. Then every moment sequence in the moment cone is singular. This is a contradiction to Sard's Theorem since the moment cone has non-empty interior.
\end{proof}

\begin{rem}\label{rem:signedCarabounds}
Theorem \ref{densethm} also holds for the signed Carath\'eodory number with verbatim the same proof. With Theorem \ref{thm:signedCaraUpperBound} we get
\begin{equation}
\cN_\sA \leq \cat_{\sA,\pm} \leq 2\cN_\sA
\end{equation}
for ${\sA}\subset C^r(\rset^n;\rset)$ and  $r > \cN_{\sA}\cdot (n+1) - m$. Without these conditions the lower bound needs not to hold, neither for $\cat_\sA$ nor for $\cat_{\sA,\pm}$, see \cite[pp.\ 317--318]{federerGeomMeasTheo}.
\end{rem}

\begin{prop}
Suppose that $\sA\subseteq C^1(\rset^n,\rset)$ and $\{x\in\rset^n : s_\sA(x)=0\}$ is bounded. Let $\gamma\in C^1(\rset^n,\rset)$ be such that $\gamma(x)\geq 1$ and $\lim_{|x|\rightarrow\infty} \frac{f_i(x)}{\gamma(x)}=0$ for $i=1,\dots,m$ and let $s$ be a moment sequence of $\sA$. Set
\begin{equation}\label{eq:GammaSet}
\Gamma_{l,c}(s) := S_{\sA,\cat_\sA(s)+l}^{-1}(s) \cap \left\{ (C,X)\in\rset_{\geq 0}^{\cat_\sA(s)+l}\times \rset^{n\cdot(\cat_\sA(s)+l)} \,\middle|\, \sum_i c_i \gamma(x_i) \leq c \right\}
\end{equation}
with $l\in\nset_0$ and $c\geq 0$. Then:
\begin{enumerate}
\item[\textit{(i)}] $\Gamma_{l,c}(s)$ is closed for all $l\in\nset_0$ and $c\geq 0$.
\item[\textit{(ii)}] $\Gamma_{0,c}(s)$ is compact for all $c\geq 0$.
\end{enumerate}
If, in addition, $s$ is regular, then:
\begin{enumerate}
\item[\textit{(iii)}] $\exists c\geq 0: \Gamma_{l,c}(s)$ unbounded $\gdw$ $l\geq 1$.
\item[\textit{(iv)}] $\Gamma_{l,c}(s)$ compact $\forall c\geq 0$ $\gdw$ $l=0$.
\end{enumerate}
\end{prop}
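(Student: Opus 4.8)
The plan is to handle the four parts in order, deducing (iii) and (iv) from (ii) together with one genuinely new ingredient: an explicit unbounded family of representations built from the regularity of $s$.

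Part (i) is immediate from continuity. The map $S_{\sA,\cat_\sA(s)+l}$ is $C^1$, hence $S_{\sA,\cat_\sA(s)+l}^{-1}(s)$ is closed; since $\gamma\in C^1$, the function $(C,X)\mapsto\sum_i c_i\gamma(x_i)$ is continuous, so its sublevel set $\{\sum_i c_i\gamma(x_i)\le c\}$ is closed; and $\rset_{\ge 0}^{\cat_\sA(s)+l}\times\rset^{n(\cat_\sA(s)+l)}$ is closed. Thus $\Gamma_{l,c}(s)$ is an intersection of closed sets.

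For (ii) write $k:=\cat_\sA(s)$; by (i) it suffices to show boundedness. Two observations drive the argument. Since $\gamma\ge 1$, the constraint $\sum_i c_i\gamma(x_i)\le c$ gives $c_i\le c_i\gamma(x_i)\le c$, so all weights are bounded; and for any atom with $|x_i|$ large, $c_i f_j(x_i)=(c_i\gamma(x_i))\,\frac{f_j(x_i)}{\gamma(x_i)}\to 0$, because $c_i\gamma(x_i)\le c$ while $f_j/\gamma\to 0$ at infinity. I argue by contradiction: an unbounded sequence $(C^{(p)},X^{(p)})$ in $\Gamma_{0,c}(s)$ has bounded weights, so $\|X^{(p)}\|\to\infty$ along a subsequence and at least one coordinate escapes. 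Passing to a further subsequence, split the indices into those whose atoms stay bounded (hence converge, together with their weights) and those escaping to infinity. By the second observation the escaping atoms contribute $0$ to the moment in the limit, so passing to the limit in $\sum_i c_i^{(p)}s_\sA(x_i^{(p)})=s$ yields a representing measure of $s$ with strictly fewer than $k$ atoms, contradicting $\cat_\sA(s)=k$. Hence $\Gamma_{0,c}(s)$ is bounded and therefore compact.

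For (iii), one implication is the contrapositive of (ii): since $\Gamma_{0,c}(s)$ is bounded for every $c$, unboundedness forces $l\ge 1$. For the converse I use regularity. Let $(C_0,X_0)$ be a minimal representation of $s$ with $k=\cat_\sA(s)$ positive-weight atoms. As $s$ is regular (Definition \ref{defi:singularregular}), $DS_{k,\sA}(C_0,X_0)$ has full rank $m$, so $S_{k,\sA}$ is a submersion near $(C_0,X_0)$ and admits a continuous local section $\sigma$ with $\sigma(s)=(C_0,X_0)$ taking values in $\rset_{>0}^k\times\rset^{nk}$. For $y$ with $|y|$ large set $\varepsilon:=\gamma(y)^{-1}>0$; then $\varepsilon\gamma(y)=1$ while $\varepsilon s_\sA(y)\to 0$ as $|y|\to\infty$, so $s-\varepsilon s_\sA(y)$ lies in the domain of $\sigma$, and $(\sigma(s-\varepsilon s_\sA(y)),\varepsilon,y)$, padded by $l-1$ zero-weight atoms when $l\ge 2$, is a $(k+l)$-atomic representation of $s$ whose $\gamma$-mass tends to $\sum_i c_{0,i}\gamma(x_{0,i})+1$ by continuity of $\sigma$. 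Choosing $c$ slightly above this limit, all these representations lie in $\Gamma_{l,c}(s)$ for large $|y|$ and contain an atom at $y\to\infty$, so $\Gamma_{l,c}(s)$ is unbounded. (A quicker but less informative route simply pads the minimal representation with $l$ zero-weight atoms and lets their free positions escape.)

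Finally, (iv) follows: for $l=0$, $\Gamma_{0,c}(s)$ is compact for all $c$ by (ii); for $l\ge 1$, (iii) supplies a $c$ with $\Gamma_{l,c}(s)$ unbounded, hence not compact, so the property "compact for all $c$" fails. I expect the main obstacle to be the construction in (iii): one must keep the corrected weights positive and the total $\gamma$-mass uniformly bounded as the added atom escapes, which is precisely where full rank of $DS_{k,\sA}$ (regularity of $s$) and the growth condition $f_i/\gamma\to 0$ are used in tandem. The boundedness step in (ii) is the other delicate point, since it requires controlling the weights and the vanishing of escaping atoms' moment contributions simultaneously.
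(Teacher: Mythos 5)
Your proof is correct and takes essentially the same approach as the paper's: closedness as an intersection of preimages of closed sets, compactness of $\Gamma_{0,c}(s)$ via the bound $c_i\le c_i\gamma(x_i)\le c$ and the vanishing contribution $c_i f_j(x_i)=(c_i\gamma(x_i))\,f_j(x_i)/\gamma(x_i)\to 0$ of escaping atoms against the minimality of $\cat_\sA(s)$, the forward direction of (iii) from (ii), and the converse by an implicit-function-theorem perturbation of a full-rank (regular) minimal representation that inserts an arbitrarily distant atom with small positive weight and controlled $\gamma$-mass --- the only difference being that you tie the weight $\gamma(y)^{-1}$ to the escaping atom $y$, while the paper fixes the new atom $x$ and shrinks the parameter $t_0$. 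Your parenthetical ``quicker route'' (zero-weight padding with escaping positions) is in fact also valid under the paper's literal definition of $\Gamma_{l,c}(s)$, whose weights range over $\rset_{\geq 0}$, and would bypass regularity entirely; the IFT construction, which both you and the paper carry out, is the substantive version because it produces genuine positive-weight representing measures with a prescribed far-away atom.
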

\begin{proof}
(i): If $f\in C(\rset^n,\rset^m)$ and $K\subseteq\rset^m$ is closed, then $f^{-1}(K)$ is closed by the continuity of $f$. Since both intersecting sets in (\ref{eq:GammaSet}) are of the form $f^{-1}(K)$, they  are closed and so is their intersection.

(ii): Suppose  $\Gamma_{0,c}$ is non-empty. Since $\Gamma_{0,c}$ is closed by (i), it  suffices to show that it is bounded. Assume to the contrary that it is unbounded and let $(C^{(i)},X^{(i)})$ be a sequence such that $\lim_{i\to \infty} \, \|(C^{(i)},X^{(i)})\|=\infty$. Since
\[0 \leq c_j^{(i)} \leq c_j^{(i)} \gamma(x_j) \leq \sum_{l} c_{l}^{(i)} \gamma(x_{l}) \leq c\]
the sequence $(C^{(i)})$ is bounded. The sequence $(X^{(i)})$ is unbounded. After renumbering and passing to subsequences we can assume that $c_j^{(i)}\rightarrow c_j^*$ for all $j$, $x_j^{(i)}\rightarrow x_j^*$ for $j=1,...,k$ and $\|x_j^{(i)}\|\rightarrow\infty$ for $j=k+1,...,\cat_\sA(s)$ as $i\to \infty$. Since 
\begin{align*}
s = S_{\sA,\cat_\sA}((C^{(i)},X^{(i)})) &= \sum_{j=1}^k c_j^{(i)} s_\sA(x_j^{(i)}) + \sum_{i=k+1}^{\cat_\sA} c_j^{(i)} s_\sA(x_j^{(i)})
\end{align*}
for all $i$, it follows that 
\begin{align*}
s &= \lim_{i\rightarrow\infty} \sum_{j=1}^k c_j^{(i)} s_\sA(x_j^{(i)}) + \lim_{i\rightarrow\infty} \sum_{j=k+1}^{\cat_\sA} c_j^{(i)} s_\sA(x_j^{(i)})\\
&= \sum_{j=1}^k c_j^* s_\sA(x_j^*) + \lim_{i\rightarrow\infty} \underbrace{\sum_{j=k+1}^{\cat_\sA}  \underbrace{\frac{s_\sA(x_j^{(i)})}{\gamma(x_j^{(i)})}}_{\rightarrow 0} \cdot \underbrace{c_j^{(i)} \gamma(x_j^{(i)})}_{\leq c}}_{\rightarrow 0}\\
&= \sum_{j=1}^k c_j^* s_\sA(x_j^*).
\end{align*}
Therefore, $\mu^*=((c_1^*,...,c_k^*),(x_1^*,...,x_k^*))$ is a $k$-atomic representing measure of $s$ with $k\leq\cat_\sA(s)$. By the minimality of $\cat_\sA(s)$,    $k=\cat_\sA(s)$. Hence all sequence $(x_j^{(i)})$ are bounded which is a contradiction. Thus $\Gamma_{0,c}(s)$ is bounded.

It is clear that (iii) and (iv) are equivalent. Thus it suffices to prove (iii).

(iii) ``$\Rightarrow$'': By (ii),  if $\Gamma_{l,c}(s)$ is unbounded, we find a $k$-atomic representing measure with $k<\cat_\sA(s)+l$, i.e., $l\geq 1$.

(iii) ``$\Leftarrow$'': We will show that there is a $c>0$ such that for every $x\in\rset^n$ there is a representing measure $\mu$ in $\Gamma_{1,c}(s)$ which has  $x$ as an atom. This will prove that $\Gamma_{1,c}$, hence  also $\Gamma_{l,c}$, is unbounded for  $l\geq 1$.

Let $\mu_0 = (C_0,X_0) = ((c_{0,1},...,c_{0,\cat_\sA(s)}),(x_{0,1},...,x_{0,\cat_\sA(s)}))$ be a representing measure of $s$. Set
\[c := \int \gamma~d\mu_0 +1.\]
Since $s$ is regular, all representing measures have full rank. Hence there exist variables $y_1,...,y_m$ from $c_1,...,c_{\cat_\sA(s)},x_{1,1},...,x_{\cat_\sA(s),n}$ such that $D_y S(\mu_0)$ is a square matrix with full rank. Then
\[F((C,X),t) = S_{\cat_\sA(s),\sA}((C,X)) - s + t\cdot s_\sA(x)\]
is a $C^1$-function such that $F((C_0,X_0),0)=0$ and $D_y F((C_0,X_0)) = D_y S(\mu_0)$ is bijective.  Thus, $F$ fulfills all assumptions of the implicit function theorem, hence there are an $\varepsilon >0$ and a $C^1$--function $(C(t),X(t))$ such that $F((C(t),X(t)),t)=0$ for all $t\in (-\varepsilon, \varepsilon)$. Since $c_{i,0}>0$, there is $t_0\in (0,\varepsilon)$ such that $c_i(t_0)>0$ for all $i$, so 
\[\mu(t_0) = \sum_{i=1}^{\cat_\sA(s)} c_i(t_0) \delta_{x_i(t_0)} + t_0 \delta_{x} \quad\text{with}\quad \int\gamma~d\mu(t_0) \leq c\]
is a $(\cat_\sA(s)+1)$-atomic representing measure of $s$ which has $x$ as an atom.
\end{proof}

(iii) and (iv) no longer hold if $s$ is singular. E.g., let $s$ be moment sequence of the measure $\mu=\sum_{i=1}^{10} \delta_{x_i}$ where $x_i$ are the ten zeros of the Robinson polynomial, then $S_{k,\sA}^{-1}(s) \subseteq [0,10]^k\times \{x_1,...,x_{10}\}^k$ is compact for all $k\geq 10$.

The next proposition  summarizes a number of basic properties of the sets $\cS_k$ and the Carath\'eodory number. Recall that $B_\rho(t)$ is the ball with center $t$ and radius $\rho$ in $\rset^m$.

\begin{prop}\label{prop:moreOnMomentCone}
%
\begin{itemize}
\item[\em (i)] Suppose that $\cS_{k-1}$ is closed for some $k$,   $\cN_\sA\leq k \leq\cat_\sA$. Then there exist a moment sequence $s$ and an $\varepsilon>0$ such that $\cat_\sA(t)=k$ for all $t\in B_\varepsilon(s)$.
\item[\em (ii)] $s\in\mathrm{int}\,\cS_{\cat_\sA}$ if and only if 
there exists $(C,X)$ such that $S_\sA(C,X)=s$ and $\rank DS_\sA(C,X) = |\sA|$.
\item[\em (iii)] $s\in\partial\cS_{\cat_\sA}$ if and only if  $ \rank DS_\sA(C,X) < |\sA|$ for all  $(C,X)$ such that $S_\sA(C,X)=s$.
\item[\em (iv)] Suppose that $\cN_\sA < \cat_\sA$ and $\cS_k$ is closed for all $k=\cN_\sA,...,\cat_\sA.$ Then for each such $k$ there exists  $s\in\mathrm{int}\,\cS_{\cat_\sA}$ such that all $k$-atomic representing measures of $s$ are singular,  but $s$ has a regular representing measure with at least $k+1$ atoms.
\item[\em (v)] Suppose that $\cN_\sA < \cat_\sA$,  $\cS_k$ is closed for $k=\cat_\sA-1,\cat_\sA$ and $\cS_{\cat_\sA}\neq\rset^{|\sA|}$. If  $\rset^{|\sA|}\setminus\cS_{\cat_\sA-1}$ is path-connected,  there exists  $s\in\partial\cS_{\cat_\sA}$ such that $\cat_\sA(s)=\cat_\sA$.
\end{itemize}
\end{prop}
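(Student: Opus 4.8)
The plan is to prove (ii) and (iii) first, since (i), (iv) and (v) rest on them.

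\textbf{(ii) and (iii).} For $s\in\cS_{\cat_\sA}$ one has either $s\in\mathrm{int}\,\cS_{\cat_\sA}$ or $s\in\partial\cS_{\cat_\sA}$, and the conditions ``$\exists\,(C,X)\colon\rank DS_\sA(C,X)=|\sA|$'' and ``$\forall\,(C,X)\colon\rank DS_\sA(C,X)<|\sA|$'' are negations of one another; hence (ii) and (iii) are logically equivalent and it suffices to prove both implications of (ii). For ``$\Leftarrow$'': if $S_\sA(C,X)=s$ with $\rank DS_\sA(C,X)=|\sA|$ (weights may be taken positive), then $S_\sA$ is a submersion at $(C,X)$, so by the inverse function theorem its image contains a neighbourhood of $s$ and $s\in\mathrm{int}\,\cS_{\cat_\sA}$. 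This is the contrapositive of (iii)``$\Rightarrow$'', which can also be seen directly: by Proposition~\ref{prop:zerosSupport} a boundary point admits $p\in\Pos(\cA,\cX)$, $p\neq0$, with $L_s(p)=0$ and all representing measures supported on $\cZ(p)$; each atom is then a global minimum of $p$ on $\rset^n$, so $p(x_i)=0$ and $\nabla p(x_i)=0$, i.e.\ the coefficient vector of $p$ is a nonzero left null vector of $DS_\sA(C,X)$ for every representation. For ``$\Rightarrow$'' I would let $U$ be the set of sequences admitting a regular representation; $U$ is open by the submersion argument and $U\subseteq\mathrm{int}\,\cS_{\cat_\sA}$, while Sard's Theorem (Theorem~\ref{densethm}) makes $U$ dense in the convex, hence connected, set $\mathrm{int}\,\cS_{\cat_\sA}$. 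Upgrading this density to $U=\mathrm{int}\,\cS_{\cat_\sA}$ is the main obstacle, because a limit of regular representations may degenerate (rank is only lower semicontinuous and atoms can escape to infinity); I expect to need a maximal-rank representation together with a local constant-rank normal form, or the properness supplied by closedness of the $\cS_k$.

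\textbf{(i).} Since $\cN_\sA\le k$, fix a regular $k$-atomic $(C_0,X_0)$ with positive weights; the submersion argument places $s_0:=S_{k,\sA}(C_0,X_0)$ in $\mathrm{int}\,\cS_k$, so $\cat_\sA\le k$ on a ball about $s_0$. As $k\le\cat_\sA$, \eqref{stricticl} gives $\cS_{k-1}\subsetneq\cS_k$, and $\cS_{k-1}$ is closed; it therefore suffices to find a point of $\mathrm{int}\,\cS_k$ lying outside $\cS_{k-1}$, for a small ball around it lies in $\cS_k$ (so $\cat_\sA\le k$) and misses the closed $\cS_{k-1}$ (so $\cat_\sA\ge k$), forcing $\cat_\sA\equiv k$ there. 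The crux is $\mathrm{int}\,\cS_k\not\subseteq\cS_{k-1}$. For $k=\cN_\sA$ this is immediate, as $\cS_{k-1}$ has Lebesgue measure zero by Theorem~\ref{densethm}. In general I would use that the regular points are dense in the domain (valid whenever the $|\sA|\times|\sA|$ minors of $DS_{k,\sA}$ do not vanish identically, e.g.\ for analytic or polynomial $\sA$), whence $\cS_k=\overline{\mathrm{int}\,\cS_k}$; then $\mathrm{int}\,\cS_k\subseteq\cS_{k-1}$ would give $\cS_k\subseteq\overline{\cS_{k-1}}=\cS_{k-1}$, contradicting the strict inclusion.

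\textbf{(iv) and (v).} For (iv), fix $k$ with $\cN_\sA\le k<\cat_\sA$. By (i) applied with $k+1$, choose $q\in\mathrm{int}\,\cS_{\cat_\sA}\setminus\cS_k$, and let $p_0\in\mathrm{int}\,\cS_k\subseteq\mathrm{int}\,\cS_{\cat_\sA}$ be the image of a regular $k$-point. The segment $[p_0,q]$ stays in the convex set $\mathrm{int}\,\cS_{\cat_\sA}$ and, $\cS_k$ being closed, meets $\partial\cS_k$ at some $s$. Then $s\in\partial\cS_k\cap\mathrm{int}\,\cS_{\cat_\sA}$: the contrapositive of the submersion argument at level $k$ shows every $k$-atomic representation of $s$ is singular, while (ii)``$\Rightarrow$'' gives $s$ a regular representation, which must use at least $k+1$ atoms since a regular representation with $\le k$ atoms would place $s$ in $\mathrm{int}\,\cS_k$. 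For (v), suppose for contradiction that $\partial\cS_{\cat_\sA}\subseteq\cS_{\cat_\sA-1}$. As $\cS_{\cat_\sA}$ is closed and $\neq\rset^{|\sA|}$, we have $\rset^{|\sA|}=\mathrm{int}\,\cS_{\cat_\sA}\sqcup\partial\cS_{\cat_\sA}\sqcup(\rset^{|\sA|}\setminus\cS_{\cat_\sA})$; removing $\cS_{\cat_\sA-1}\supseteq\partial\cS_{\cat_\sA}$ expresses $\rset^{|\sA|}\setminus\cS_{\cat_\sA-1}$ as the disjoint union of the two open sets $\mathrm{int}\,\cS_{\cat_\sA}\setminus\cS_{\cat_\sA-1}$ and $\rset^{|\sA|}\setminus\cS_{\cat_\sA}$. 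The former is nonempty by (i) with $k=\cat_\sA$, the latter because $\cS_{\cat_\sA}\neq\rset^{|\sA|}$, so $\rset^{|\sA|}\setminus\cS_{\cat_\sA-1}$ is disconnected, contradicting path-connectedness. Hence some $s\in\partial\cS_{\cat_\sA}\setminus\cS_{\cat_\sA-1}$ exists, i.e.\ $s\in\partial\cS_{\cat_\sA}$ with $\cat_\sA(s)=\cat_\sA$.
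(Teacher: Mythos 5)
You have located the decisive gap yourself: the implication (ii)``$\Rightarrow$'' is never proved, only reduced to a density statement (regular images dense in $\mathrm{int}\,\cS_{\cat_\sA}$ via Sard) that you admit you cannot upgrade to equality. The paper closes this with a short trick that avoids limits, properness and constant-rank normal forms entirely: fix a regular measure $\nu$ with positive weights (it exists with $\cN_\sA$ atoms by Definition \ref{NAdefinition}); if $s\in\mathrm{int}\,\cS_{\cat_\sA}$, then $s':=s-\varepsilon S_\sA(\nu)$ is still a moment sequence for small $\varepsilon>0$; take \emph{any} representing measure $\mu'$ of $s'$ and set $\mu:=\mu'+\varepsilon\nu$. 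Then $\mu$ represents $s$, and $DS_\sA(\mu)$ contains the columns of $DS_\sA(\nu)$ up to rescaling by positive weights (also when atoms of $\mu'$ and $\nu$ coincide), hence has full rank $|\sA|$. Note that the resulting $(C,X)$ may have more than $\cat_\sA$ atoms, which statement (ii) permits, and that no closedness of the $\cS_k$ is needed --- important, since (ii) and (iii) carry no such hypothesis. This missing idea also leaves a hole in your (iv), which (like the paper's proof of (iv)) invokes (ii)``$\Rightarrow$'' to produce the regular representation with at least $k+1$ atoms; once (ii) is repaired, your (iv) goes through exactly as in the paper.

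There is a second, smaller defect in (i): you establish the crux $\mathrm{int}\,\cS_k\not\subseteq\cS_{k-1}$ only for $k=\cN_\sA$ (via Theorem \ref{densethm}) or under an added hypothesis (real-analytic or polynomial $\sA$) that is not among the assumptions of the proposition. The paper instead argues by a purely topological contradiction: if $\mathrm{int}\,\cS_k\subseteq\cS_{k-1}$, then taking closures yields $\cS_{k-1}=\overline{\cS_{k-1}}\supseteq\overline{\cS_k}\supseteq\overline{\cS_{k-1}}=\cS_{k-1}$, so $\cS_k\subseteq\cS_{k-1}$, contradicting the strict inclusion (\ref{stricticl}). (To be fair, this closure step tacitly uses $\cS_k\subseteq\overline{\mathrm{int}\,\cS_k}$, which for the non-convex sets $\cS_k$, $\cN_\sA\leq k<\cat_\sA$, is essentially the point you were worried about; but the paper's route requires no analyticity.) The parts you did complete are correct and close to the paper: (ii)``$\Leftarrow$'' is the same submersion argument, your left-null-vector derivation of (iii) from Proposition \ref{prop:zerosSupport} is a nice direct alternative to the paper's ``(iii) follows from (ii)'', and your disconnection argument for (v) is the same as the paper's path-crossing argument, phrased via an open partition of $\rset^{|\sA|}\setminus\cS_{\cat_\sA-1}$.
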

\begin{proof}


(i): Fix such a number  $k$ and assume $(\mathrm{int}\,\cS_k)\setminus\cS_{k-1}=\emptyset$. Then we have $\cS_{k-1}\supseteq\mathrm{int}\,\cS_{k}\supseteq\mathrm{int}\,\cS_{k-1}$. Taking the closure,   $\cS_{k-1} = \overline{\cS_{k-1}} \supseteq \overline{\cS_{k}} \supseteq \overline{\cS_{k-1}} = \cS_{k-1}$, so that  $\cS_k\subseteq\cS_{k-1}$ which contradicts (\ref{stricticl}). Thus, $(\mathrm{int}\,\cS_k)\setminus\cS_{k-1}\neq\emptyset$.

(ii): ``$\Leftarrow$'': Let $(C,X)$ be a full rank measure of $s$. Then a neighborhood $U$ of $(C,X)$ is mapped onto a neighborhood of $s$, that is, $s$ is an inner point.

``$\Rightarrow$'': Let $s$ be an inner point. Choose $\nu$ such that $S_\sA(\nu)$ has full rank. Since $s$ is an inner point, there exists  $\varepsilon>0$ such that $s':=s-\varepsilon\cdot S_\sA(\nu)$ is also an inner point. In particular, $s'$  is a moment sequence. Let $\mu'$ be a representing measure of $s'.$ Then $\mu = \mu' + \varepsilon\cdot\nu$ is a representing measure of $s$ and has full rank, since already $DS_\sA(\nu)$ has full rank.

(iii) follows from (ii).

(iv): Let $s\in\mathrm{int}\,\cS_k\subseteq\mathrm{int}\,\cS_{\cat_\sA}$.  By (i),  there exists $t\in (\mathrm{int}\,\cS_{\cat_\sA})\setminus\cS_k$ for all $k=\cN_\sA,...,\cat_\sA-1$. Then $[s,t]:=\{\lambda s + (1-\lambda) t \,|\, \lambda\in[0,1]\}\subseteq\mathrm{int}\,\cS_{\cat_\sA}$ by the convexity of $\cS_{\cat_\sA}$. Therefore, since $s\in\mathrm{int}\,\cS_k$ but $s\not\in\mathrm{int}\,\cS_k$, we have
\[\mathrm{int}\,\cS_{\cat_\sA}\cap\partial\cS_k \supseteq [s,t]\cap\partial\cS_k \neq \emptyset.\]
Hence there exists  $s\in\mathrm{int}\,\cS_{\cat_\sA}\cap\partial\cS_k$. Then all  $k$-atomic representing measures of $s$ are singular.  Otherwise, a full rank $k$-atomic measures implies that $s$ is an inner point of $\cS_k$. But, by (iv),  $s$ has a regular $l$-atomic measure with $l>k$.

(v): Let $s\in\mathrm{int}\,\cS_{\cat_\sA}\setminus\cS_{\cat_\sA-1}$ by (i) and $t\in\rset^{|\sA|}\setminus\cS_{\cat_\sA}$. Since  $s,t\in\rset^{|\sA|}\setminus\cS_{\cat_\sA-1}$, they are path-connected, so there exists a continuous path $\gamma:[0,1]\rightarrow\rset^{|\sA|}\setminus\cS_{\cat_\sA-1}$ with $\gamma(0)=s$ and $\gamma(1)=t$. But since $s=\gamma(0)\in\mathrm{int}\,\cS_{\cat_\sA}$, $t=\gamma(1)\not\in\mathrm{int}\,\cS_{\cat_\sA}$, and $\gamma([0,1])\subseteq\rset^{|\sA|}\setminus\cS_{\cat_\sA-1}$, we have $\gamma([0,1])\cap (\partial\cS_{\cat_\sA}\setminus\cS_{\cat_\sA-1})\neq\emptyset$. Therefore, $\partial\cS_{\cat_\sA}\setminus\cS_{\cat_\sA-1}\neq\emptyset$.
\end{proof}

In Sections \ref{monoone} and \ref{carzeropos} we derive upper bounds of $\cat_\sA$ by using Proposition \ref{propstoboundary} and the inequality (\ref{caboundary}). As (v) implies, this inequality can be strict, since  the Carath\'eodory number $\cat_\sA$ can be attained at a boundary point, see the following example.

\begin{exa}\label{exmp:CaratheodoryOnBoundary}
The (homogeneous) Motzkin polynomial
\[M(x,y,z) = z^6 + x^4 y^2 + x^2 y^4 - 3x^2 y^2 z^2\]
has the $6$ projective roots
\[\cZ(M) = \{(1,1,1),(1,1,-1),(1,-1,1),(1,-1,-1),(1,0,0),(0,1,0)\}.\]
We consider the truncated moment problem on the projective space $\pset(\rset^2) $ for 
\[\sB := \{z^6 + x^4 y^2 + x^2 y^4 - 3x^2 y^2 z^2,x^6,y^6,z^6,x^5 y, x^5 z, x^4 y z\}.\]
Clearly, $M\in\lin\sB$. Since $M$ is non-negative and has a discrete set of roots, $s = \sum_{\xi\in\cZ(M)} s_\sB(\xi)$ is a boundary point of the closed moment cone. The matrix
\[(s_\sB(\xi))_{\xi\in\cZ(M)} = \begin{pmatrix}
0 & 0 & 0 & 0 & 0 & 0\\
1 & 1 & 1 & 1 & 1 & 0\\
1 & 1 & 1 & 1 & 0 & 1\\
1 & 1 & 1 & 1 & 0 & 0\\
1 & 1 & -1 & -1 & 0 & 0\\
1 & -1 & 1 & -1 & 0 & 0\\
1 & -1 & -1 & 1 & 0 & 0
\end{pmatrix}\]
has rank $6$, i.e., the set $\{s_\sB(\xi)\}_{\xi\in\cZ(M)}$ is linearly independent. Hence $\cat_\sB(s) = 6$ by Theorem \ref{thm:LinIndepZeros}  and $\cat_\sB \leq 6 = |\sB| - 1$ by Theorem \ref{worstupperbound}. Thus, $\cat_\sB = \cat_\sB(s) = 6$, that is, the Carath\'eodory number is attained at the boundary moment sequence $s$.
\end{exa}

Next we derive an upper bound for the Carath\'eodory number in terms of zeros of positive elements of $\cA$. For the rest of this section we assume that $\cX$ is a closed subset of 
$\rset^n$ or $\pset(\rset^n)$ and $\sA \subseteq C^1(\cX,\rset)$. By the latter we mean that there exists an open subset $\cU$ of $\rset^n$ or $\pset(\rset^n)$ such that $\cX\subseteq \cU$ and $\sA \subseteq C^1(\cU,\rset).$

\begin{dfn}\label{defintionm_a}
 Let $\cM_\sA$ be the largest number $k$  obeying  the following property: \\
$(*)_k$:\, There  exist $f\in \cA$ and  $x_1,\dots,x_k\in \cZ(f)$ such that $f(x)\geq 0$ on $\cX$ and $\{s_\sA(x_i)\}_{i=1,...,k}$ is linearly independent ($DS_{k,\sA}((1,...,1),(x_1,...,x_k))$ does not have full rank).
\end{dfn}

From the definition it is clear that $\cM_\sA$ is the largest dimension an exposed face of $\cS_\sA$.

\begin{prop}\label{propboundmsa}
For each   $s\in \partial \cS_\sA\cap \cS_\sA$ we have $\cat_\sA(s)\leq \cM_\sA.$
\end{prop}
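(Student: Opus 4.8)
The plan is to take an arbitrary boundary point $s\in\partial\cS_\sA\cap\cS_\sA$ and show that it admits a representing measure with at most $\cM_\sA$ atoms, by exploiting the supporting positive element furnished by Proposition~\ref{prop:zerosSupport}. First I would invoke Proposition~\ref{prop:zerosSupport}: since $s$ is a boundary point, there exists $p\in\Pos(\cA,\cX)$, $p\neq 0$, with $L_s(p)=0$, and every representing measure of $s$ is supported on the zero set $\cZ(p)$. Fix a finitely atomic representing measure $\mu=\sum_{i=1}^k c_i\delta_{x_i}$ of $s$, which exists by Proposition~\ref{richtercor}; all its atoms $x_i$ lie in $\cZ(p)$, and I may assume the $c_i$ are strictly positive and the $x_i$ pairwise distinct, so that $k=\cat_\sA(s)$ for a minimal choice.

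The central step is to argue that for a \emph{minimal} representing measure the vectors $\{s_\sA(x_i)\}_{i=1}^k$ must be linearly independent. If they were linearly dependent, there would be a nontrivial relation $\sum_i \lambda_i s_\sA(x_i)=0$, and the standard atom-reduction move — replacing $c_i$ by $c_i - t\lambda_i$ and increasing $t$ until some coefficient first hits zero while all remain nonnegative — would produce a representing measure of $s$ with strictly fewer atoms, contradicting minimality. (All the new atoms automatically still lie in $\cZ(p)$, so positivity on $\cX$ and the support condition are preserved.) Hence the atoms $x_1,\dots,x_k$ of the minimal measure satisfy: $p\in\cA$ is nonnegative on $\cX$, each $x_i\in\cZ(p)$, and $\{s_\sA(x_i)\}_{i=1}^k$ is linearly independent. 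This is precisely the configuration in property $(*)_k$ of Definition~\ref{defintionm_a}, so by the very definition of $\cM_\sA$ as the largest such $k$ we obtain $k\le\cM_\sA$, i.e.\ $\cat_\sA(s)=k\le\cM_\sA$.

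The step I expect to be the main obstacle is the atom-reduction argument that forces linear independence at a minimal measure; I must be careful that the reduction stays inside the admissible class. The parameter $t$ can be pushed in whichever sign direction causes a positive coefficient to vanish first (a dependence relation always permits this), and because the support only ever shrinks, the reduced measure remains supported on $\cZ(p)$ and hence represents the same $s$ with fewer than $k$ atoms, contradicting the minimality of $k=\cat_\sA(s)$. A secondary point to handle cleanly is the correspondence between linear dependence of $\{s_\sA(x_i)\}$ and the rank deficiency of $DS_{k,\sA}((1,\dots,1),(x_1,\dots,x_k))$ recorded parenthetically in Definition~\ref{defintionm_a}; since the columns of $DS_{k,\sA}$ include the vectors $s_\sA(x_i)$, linear independence of the latter is exactly what $(*)_k$ requires, so no extra work beyond identifying the configuration is needed.
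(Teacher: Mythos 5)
Your strategy coincides with the paper's: Proposition \ref{prop:zerosSupport} yields a nonzero $p\in\Pos(\cA,\cX)$ with $L_s(p)=0$, forcing all atoms of a minimal representing measure into $\cZ(p)$, and one then checks that this configuration witnesses property $(*)_k$ of Definition \ref{defintionm_a}, giving $\cat_\sA(s)=k\leq\cM_\sA$. Your atom-reduction argument for the linear independence of $\{s_\sA(x_i)\}_{i=1}^k$ at a minimal measure is correct; it is in fact the argument the paper itself uses in the proof of Theorem \ref{thm:caraBounds}, and on this point your write-up is more solid than the paper's own step, which cites Theorem \ref{thm:LinIndepZeros} (a statement that has linear independence as a hypothesis, not as a conclusion).

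The gap is in your final paragraph. Property $(*)_k$ contains, besides linear independence, the clause that $DS_{k,\sA}((1,\dots,1),(x_1,\dots,x_k))$ does \emph{not} have full rank $m=|\sA|$, and the paper's proof treats this as a condition to be verified. It is neither equivalent to, nor implied by, linear independence of $\{s_\sA(x_i)\}$: the matrix $DS_{k,\sA}$ also contains the derivative columns $\partial_j s_\sA(x_i)$, and the clause demands rank $<m$, while independence of the $k$ vectors $s_\sA(x_i)$ only gives rank $\geq k$. So your assertion that ``linear independence of the latter is exactly what $(*)_k$ requires, so no extra work is needed'' is a misreading, and as written your argument only bounds $\cat_\sA(s)$ by the possibly larger number obtained by dropping the rank clause from the definition. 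The missing step is supplied in the paper by Proposition \ref{prop:moreOnMomentCone}(iii): since $s\in\partial\cS_\sA\cap\cS_\sA$, \emph{every} representing measure $(C,X)$ of $s$ is singular, i.e.\ $\rank DS_{k,\sA}(C,X)<|\sA|$; and since all $c_i>0$, rescaling the derivative columns by $c_i^{-1}$ shows $\rank DS_{k,\sA}((1,\dots,1),X)=\rank DS_{k,\sA}(C,X)<|\sA|$. Inserting these two observations, applied to your minimal measure, completes the verification of $(*)_k$ and repairs the proof.
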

\begin{proof}
In this proof we abbreviate $N:=\cat_\sA(s)$.
Let $\mu = \sum_{i=1}^N c_i \delta_{x_i}$ be an $N$-atomic representing measure of $s$.
Since $s\in \partial \cS_\sA\cap \cS_\sA$, there exists $f\in \cA, f\neq 0,$ such that  $f(x)\geq 0$ on $\cX$ and $L_s(f)=0$. From the latter it follows    that  $\supp\mu\subseteq\cZ(f)$ and hence $x_1,\dots,x_N\in \cZ(f).$ Further, by Proposition \ref{prop:moreOnMomentCone}(iii),  $s\in \partial \cS_\sA\cap \cS_\sA$  implies that $DS_{N,\sA}(C,X)$ does not have full rank $|\sA|$. Since $c_i>0$ for all $i$, we have ${\rm rank}\, DS_{N,\sA}(C,X)=\rank DS_{N,\sA}((1,\dots,1),X).$ 
Finally, by Theorem \ref{thm:LinIndepZeros} the set $\{s_\sA(x_i)\}_{i=1,...,N}$ is linearly independent. Thus,  property $(*)_N$  in Definition \ref{defintionm_a} holds, so that   $\cat_\sA(s)=N\leq \cM_\sA$.
\end{proof}

\begin{thm}
Suppose that $\cX$ is a compact subset of $\rset^n$ or $\pset(\rset^n)$, condition (\ref{cond+}) is satisfied, and $\sA\subseteq C^1(\cX,\rset)$. Then
\[\cat_\sA \leq \cM_\sA + 1.\]
\end{thm}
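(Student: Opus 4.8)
The plan is to combine the two key results that immediately precede this theorem: Proposition~\ref{propstoboundary}, which provides the inequality $\cat_\sA\leq \max\{\cat_\sA(s): s\in\partial\cS_\sA\}+1$ when $\cX$ is compact, and Proposition~\ref{propboundmsa}, which bounds $\cat_\sA(s)$ by $\cM_\sA$ for every boundary point $s\in\partial\cS_\sA\cap\cS_\sA$. Since $\cX$ is assumed compact and condition~(\ref{cond+}) holds, Proposition~\ref{propstoboundary} applies directly and the supremum in~(\ref{supcx}) is attained, so the inequality~(\ref{caboundary}) is available.

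First I would invoke the hypotheses to check that both auxiliary results are applicable: compactness of $\cX$ together with condition~(\ref{cond+}) gives, via Proposition~\ref{propstoboundary}, that $\cS_\sA$ is closed in $\rset^m$ and that inequality~(\ref{caboundary}) holds, namely
\[
\cat_\sA \leq \max\{\cat_\sA(s) : s\in\partial\cS_\sA\} + 1.
\]
Because $\cS_\sA$ is closed, every boundary point $s\in\partial\cS_\sA$ actually lies in $\partial\cS_\sA\cap\cS_\sA$. Next I would apply Proposition~\ref{propboundmsa} to each such $s$, which yields $\cat_\sA(s)\leq\cM_\sA$ for all $s\in\partial\cS_\sA\cap\cS_\sA=\partial\cS_\sA$. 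Taking the maximum over all boundary points therefore gives $\max\{\cat_\sA(s): s\in\partial\cS_\sA\}\leq\cM_\sA$, and substituting this into~(\ref{caboundary}) produces the desired bound $\cat_\sA\leq\cM_\sA+1$.

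The only subtlety worth flagging is ensuring the hypotheses of Proposition~\ref{propboundmsa} are genuinely met at every boundary point, that is, confirming $\partial\cS_\sA\subseteq\cS_\sA$; this is exactly where the compactness of $\cX$ enters a second time, since it is compactness that guarantees $\cS_\sA$ is closed (as established in Proposition~\ref{propstoboundary}). I do not expect any real obstacle here: the theorem is essentially the composition of the two preceding propositions, and the proof amounts to chaining their conclusions. The main point to state carefully is that the closedness of $\cS_\sA$ makes the boundary set on which Proposition~\ref{propboundmsa} operates coincide with the boundary set appearing in~(\ref{caboundary}).
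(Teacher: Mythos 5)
Your proposal is correct and follows essentially the same route as the paper, whose proof is exactly the two-step chaining of Proposition~\ref{propstoboundary} (giving closedness of $\cS_\sA$ and inequality~(\ref{caboundary})) with Proposition~\ref{propboundmsa}. Your explicit remark that closedness of $\cS_\sA$ guarantees $\partial\cS_\sA\subseteq\cS_\sA$, so that Proposition~\ref{propboundmsa} indeed applies to every point over which the maximum in~(\ref{caboundary}) is taken, is a detail the paper leaves implicit but is exactly the right point to flag.
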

\begin{proof}
The assumptions of this theorem ensure that Proposition \ref{propstoboundary} applies. Hence the assertion follows by combining  Proposition \ref{propboundmsa} with the inequality (\ref{caboundary}). 
\end{proof}

\section{Carath\'eodory Numbers: One-dimensional Monomial Case}\label{monoone}

For the one-dimensional truncated moment problem the number $\cN_\sA$ can be calculated from the formula for the Vandermonde determinant.

\begin{lem}\label{lem:onedimNA}
Let  $\sA := \{1,x,...,x^n\}$, where  $n\in \nset$.
\begin{itemize}
\item[\em (i)] If $n= 2k-1, k\in \nset$, then
\begin{equation}\label{eq:oneDimDet1}
\det DS_{k,\sA} = c_1\cdots  c_k \cdot \prod_{1\leq i < j \leq k} (x_j - x_i)^4.
\end{equation}
\item[\em (ii)] If $n=2k, k\in \nset$, then
\begin{equation}\label{eq:oneDimDet2}
\det (DS_{k-1,\sA},s_\sA(x_k)) = c_1\cdots  c_{k-1} \cdot \prod_{1\leq i < j \leq k-1} (x_j - x_i)^4 \cdot \prod_{i=1}^{k-1} (x_k - x_i)^2.
\end{equation}
\item[\em (iii)] $\cN_\sA = \left\lfloor\frac{n}{2}\right\rfloor+1 =\left\lceil\frac{n+1}{2}\right\rceil$.
\end{itemize}
\end{lem}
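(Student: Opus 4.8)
The plan is to treat (i) and (ii) as explicit determinant evaluations and then read off (iii) by combining them with a dimension count. In both (i) and (ii) each atom $x_i$ contributes the two columns $s_\sA(x_i)$ and $c_i\, s_\sA'(x_i)$ to $DS_{k,\sA}$, where $s_\sA(x)=(1,x,\dots,x^n)^T$ and $s_\sA'(x)=(0,1,2x,\dots,nx^{n-1})^T$. First I would pull the scalar $c_i$ out of each derivative column by multilinearity of the determinant; this produces the factors $c_1\cdots c_k$ (resp.\ $c_1\cdots c_{k-1}$) and leaves a matrix whose columns are the values $s_\sA(x_i)$ and first derivatives $s_\sA'(x_i)$. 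That matrix is precisely a confluent (Hermite) Vandermonde matrix: in case (i) every node $x_1,\dots,x_k$ carries multiplicity $2$ (value and derivative), while in case (ii) the nodes $x_1,\dots,x_{k-1}$ carry multiplicity $2$ and the extra column $s_\sA(x_k)$ adds a node of multiplicity $1$.

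The key computational step is the confluent Vandermonde identity: for distinct nodes $y_1,\dots,y_r$ with multiplicities $\mu_1,\dots,\mu_r$ and columns built from the undivided derivatives of $(1,x,\dots,x^{N-1})^T$ with $N=\sum_i\mu_i$, the determinant equals $\prod_{1\le i<j\le r}(y_j-y_i)^{\mu_i\mu_j}$ up to a product of factorials $\prod_i\prod_{t=0}^{\mu_i-1}t!$, which equals $1$ whenever all $\mu_i\le 2$ since $0!=1!=1$. I would prove this by coalescence: start from an ordinary Vandermonde determinant on the perturbed nodes obtained by splitting each $y_i$ into $y_i,y_i+\varepsilon,\dots$, which evaluates to $\prod_{p<q}(z_q-z_p)$; then subtract adjacent columns within each cluster and divide by the appropriate powers of $\varepsilon$ so that the perturbed columns converge to the derivative columns, and let $\varepsilon\to 0$. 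Matching the within-cluster differences $\varepsilon$ against the $\varepsilon$-divisions leaves the limit $\prod_{i<j}(y_j-y_i)^{\mu_i\mu_j}$. Specializing to multiplicities all equal to $2$ gives the exponent $4$ in (i), and to multiplicities $(2,\dots,2,1)$ gives the exponents $4$ and $2$ in (ii).

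Finally I would deduce (iii). In $\rset$ each atom adds two columns, so $DS_{j,\sA}$ has $2j$ columns while $m=n+1$ is the number of rows. For $n=2k-1$ we have $m=2k$: fewer than $k$ atoms give fewer than $m$ columns, so the rank is less than $m$ and $\cN_\sA\ge k$; by (i) the square matrix $DS_{k,\sA}$ has nonzero determinant as soon as the $x_i$ are pairwise distinct and the $c_i$ are nonzero, so $\cN_\sA\le k$, giving $\cN_\sA=k=\lfloor n/2\rfloor+1$. For $n=2k$ we have $m=2k+1$: with $k$ atoms there are only $2k<m$ columns, so $\cN_\sA\ge k+1$; and by the computation behind (ii), applied to the $k$ double nodes $x_1,\dots,x_k$ and the single node $x_{k+1}$, the $(2k+1)\times(2k+1)$ matrix $(DS_{k,\sA},s_\sA(x_{k+1}))$ has determinant $c_1\cdots c_k\prod_{i<j}(x_j-x_i)^4\prod_i(x_{k+1}-x_i)^2\neq 0$ for distinct nodes, whence $\cN_\sA\le k+1$ and $\cN_\sA=k+1=\lfloor n/2\rfloor+1$. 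The two expressions $\lfloor n/2\rfloor+1$ and $\lceil (n+1)/2\rceil$ agree for both parities.

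I expect the main obstacle to be the bookkeeping in the confluent Vandermonde evaluation: matching the powers of $\varepsilon$ produced by the within-cluster node differences against those removed by the column divisions, and checking that the factorial prefactor is trivial for multiplicities at most $2$. Once the identity $\prod_{i<j}(y_j-y_i)^{\mu_i\mu_j}$ is established, both determinant formulas and the value of $\cN_\sA$ follow directly.
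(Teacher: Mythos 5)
Your proposal is correct and takes essentially the same route as the paper: after extracting $c_1\cdots c_k$ by multilinearity, the paper evaluates the same confluent Vandermonde determinant by exactly the coalescence limit you describe (replacing derivative columns by difference quotients, dividing by $h_1\cdots h_k$, and matching these against the within-cluster Vandermonde factors), and it deduces (iii) from the nonvanishing of these determinants at pairwise distinct nodes. Two minor points in your favor: your reading of the even case via the square matrix $(DS_{k,\sA},s_\sA(x_{k+1}))$ for $n=2k$ is the consistent one (the paper's statement of (ii) carries a harmless index shift relative to Lemma \ref{lem:onedimNA2}), and your explicit column-count lower bound is the same dimension argument recorded in Proposition \ref{NAlowerbound}.
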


\begin{proof}
We carry out the proofs in the odd case $n = 2k - 1$. The even case $n=2k$ is derived in a similar manner. 

We have  $\partial_{c_i} S_k(C,X) = s_\sA(x_i)$ and $\partial_{x_i} S_k(C,X) = c_i s_\sA'(x_i)$. Therefore,
\begin{align}\label{DS_k}
\det DS_{k,\sA}=c_1\cdots c_k\det(s_\sA(x_1),s_\sA'(x_1),...,s_\sA(x_k),s_\sA'(x_k))
\end{align}
and we compute
\begin{align*}
&\det(s_\sA(x_1),s_\sA'(x_1),...,s_\sA(x_k),s_\sA'(x_k))\\
 &= \lim_{h_1\rightarrow 0} ... \lim_{h_k\rightarrow 0} \det\left(s_\sA(x_1),\frac{s_\sA(x_1+h_1)-s_\sA(x_1)}{h_1},...,\frac{s_\sA(x_k+h_k)-s_\sA(x_k)}{h_k}\right)\\
&= \lim_{h_1\rightarrow 0} ... \lim_{h_k\rightarrow 0} \frac{\det(s_\sA(x_1),s_\sA(x_1+h_1),...,s_\sA(x_k+h_k))}{h_1\cdots h_k}\\
&= \lim_{h_1\rightarrow 0} ... \lim_{h_k\rightarrow 0} \frac{\prod_{i=1}^k\left( h_i \prod_{j=i+1}^k (x_j{+}h_j {-} x_i)(x_j{-}x_i)(x_j {-} x_i {-} h_i) (x_j {+} h_j {-} x_i {-} h_i) \right)}{h_1\cdots h_k}\\
&= \prod_{1\leq i < j \leq k} (x_j - x_i)^4.
\end{align*}
Combined with (\ref{DS_k}), this yields (\ref{eq:oneDimDet1}).

We choose the numbers $x_i$ pairwise different and all $c_i$ positive. Then the determinants in (\ref{eq:oneDimDet1}) and (\ref{eq:oneDimDet2}) are non-zero. Hence $\det{DS_{k,\sA}}\neq 0$ and therefore $\cN_\sA = k = \left\lfloor\frac{n}{2}\right\rfloor + 1$.
\end{proof}

\begin{exa}\label{exmp:one-dimCatnumber}
H.\ Richter \cite{richte57} has shown that for the one-dimensional truncated moment problem $\sA=\{1,x,...,x^d\}$ the Carath\'eodory number is $\cat_{\sA} =  \left\lceil\frac{d+1}{2}\right\rceil$. This result will also follow from Theorem \ref{thm:onedimCara} below. If we take this equality for granted and  combine it with Lemma \ref{lem:onedimNA}(iii), then we obtain 
\begin{align*}
\cN_{\sA} =  \left\lfloor\frac{d}{2}\right\rfloor+1 =\left\lceil\frac{d+1}{2}\right\rceil = \cat_{\sA}.
\end{align*}
\end{exa}

Now we turn to the general case and assume that
\begin{align}\label{sageneralcase}
{\sA}=\{x^{d_1},...,x^{d_m}\}, ~~\text{where}~~ 0\leq d_1 < d_2 <...<d_m,~~d_1,\dots,d_m\in \nset_0. 
\end{align}
Then we compute
\begin{align*}
 f_{\sA}(x_1,&...,x_m) :=\det(s_{\sA}(x_1),...,s_{\sA}(x_m)) = |(x_i^{d_j})_{i,j=1,...,m}|\nonumber \\ &= \begin{vmatrix}
x_1^{d_1} & x_2^{d_1} & \cdots & x_m^{d_1}\\
x_1^{d_2} & x_2^{d_2} & \cdots & x_m^{d_2}\\
\vdots & \vdots & \ddots & \vdots\\
x_1^{d_m} & x_2^{d_m} & \cdots & x_m^{d_m}
\end{vmatrix}
= (x_1\cdots x_m)^{d_1} \cdot
\begin{vmatrix}
1 & 1 & \cdots & 1\\
x_1^{d_2-d_1} & x_2^{d_2-d_1} & \cdots & x_m^{d_2-d_1}\\
\vdots & \vdots & \ddots & \vdots\\
x_1^{d_m-d_1} & x_2^{d_m-d_1} & \cdots & x_m^{d_m-d_1}
\end{vmatrix} .
\end{align*}
From the latter equation it follows that each linear polynomial $x_j - x_i$,  $j\neq i$, divides the polynomial $f_{\sA}$. Hence there exists a polynomial $p_{\sA}$ such that 
\begin{align}\label{eq:determantproductOneDim}
f_{\sA}(x_1,...,x_m) & =|(x_i^{d_j})_{i,j=1,...,m}|\nonumber\\ &= (x_1\cdots  x_m)^{d_1}  \prod_{1\leq i < j \leq m} (x_j {-} x_i) \cdot p_{\sA}(x_1,...,x_m)
\end{align}
The polynomial $p_{\sA}$ is uniquely determined by (\ref{eq:determantproductOneDim}). It  is homogeneous with degree
\begin{equation}
\deg p_{\sA}= d_1 + \dots + d_m - m d_1 - \frac{m(m-1)}{2}.
\end{equation}
Such polynomials $p_{\sA}$ are called Schur polynomials. They are well studied in the literature, see e.g.\ \cite{macdonSymFuncHallPoly}. For these Schur polynomials  it is known that 
\begin{equation}\label{Young}
p_{\sA}(x_1,...,x_m) = \sum_\alpha x^\alpha,
\end{equation}
where $\alpha$ ranges over some Young tableaux. In particular, (\ref{Young}) implies that all non-zero coefficients of $p_{\sA}$ are positive.

\begin{exa}\label{exmps:SchurPolynomials}
\begin{enumerate}
\item ${\sA}=\{x,x^4,x^7\}$. Then we compute 
\[\det(s_{\sA}(x_1),s_{\sA}(x_2),s_{\sA}(x_3)) = x_1 x_2 x_3 \prod_{1\leq i < j\leq 3}(x_j - x_i)\cdot p_{\sA}(x_1,x_2,x_3),\]
where
\[p_{\sA}(x_1,x_2,x_3) = (x_1^2 + x_1 x_2 + x_2^2)(x_1^2 + x_1 x_3 + x_3^2)(x_2^2 + x_2 x_3 + x_3^2).\]

\item ${\sA}=\{x,x^2,x^6\}$. Then 
\[\det(s_{\sA}(x_1),s_{\sA}(x_2),s_{\sA}(x_3)) = x_1 x_2 x_3 \prod_{1\leq i < j\leq 3}(x_j - x_i)\cdot p_{\sA}(x_1,x_2,x_3),\]
where
\begin{align*}
p_{\sA}(x_1,x_2,x_3) =&\ x_1^3 + x_1^2 x_2 + x_1^2 x_3 + x_1 x_2^2 + x_1 x_2 x_3
+ x_1 x_3^2 + x_2^3\\ &+ x_2^2 x_3 + x_2 x_3^2 + x_3^3.
\end{align*}

\item ${\sA}=\{1,x,x^2,x^6\}$. Then 
\[\det(s_{\sA}(x_1),s_{\sA}(x_2),s_{\sA}(x_3),s_{\sA}(x_4)) = \prod_{1\leq i < j\leq 3}(x_j - x_i)\cdot p_{\sA}(x_1,x_2,x_3,x_4)\]
with
\begin{align*}
p_{\sA}(x_1,x_2,x_3,x_4) = &\ x_1^3 + x_1^2 x_2 + x_1^2 x_3 + x_1^2 x_4 + x_1 x_2^2 + x_1 x_2 x_3 + x_1 x_2 x_4\\
&+ x_1 x_3^2 + x_1 x_3 x_4 + x_1 x_4^2 + x_2^3 + x_2^2 x_3 + x_2^2 x_4 + x_2 x_3^2\\
&+ x_2 x_3 x_4 + x_2 x_4^2 + x_3^3 + x_3^2 x_4 + x_3 x_4^2 + x_4^3.
\end{align*}

\item $\sA = \{1,x^2,x^3,x^5,x^6\}$. Then
\[\det((s_{\sA}(x_i))_{i=1}^5) = \prod_{1\leq i < j\leq 5}(x_j - x_i)\cdot p_{\sA}(x_1,x_2,x_3,x_4,x_5)\]
with
\[p_{\sA}(x_1,x_2,x_3,x_4) = \sum_{\alpha\in \Omega} x^\alpha + 3\sum_{\alpha\in \Phi} x^\alpha,\]
$\Omega = \{\text{all permuations of}\ (2,2,1,1,0)\}$, $\Phi = \{\text{all permuations of}\ (2,1,1,1,1)\}$.
\end{enumerate}
\end{exa}

\begin{dfn}\label{defintionq_a}
Assume that $\sA$ is as in  (\ref{sageneralcase}) and  $p_{\sA}$ is defined by  (\ref{eq:determantproductOneDim}). Set
\[q_{\sA}(x_1,...,x_k):= p_{\sA}(x_1,x_1,...,x_k,x_k)\]
if $m=2k$ is even and
\[q_{{\sA},i}(x_1,...,x_k):= p_{\sA}(x_1,x_1,...,x_{i-1},x_{i-1},x_i,x_{i+1},x_{i+1},...,x_k,x_k)\]
for all $i=1,...,k$ if $m = 2k-1$ is odd.
\end{dfn}

\begin{lem}\label{symlemma0}
\begin{enumerate}
\item[\em (i)] If $m$ is even then $q_{\sA}$ is symmetric.
\item[\em (ii)] If $m$ is odd then $q_{\sA,i}(x_1,...,x_k) = q_{\sA,k}(x_1,...,x_{i-1},x_{i+1},...,x_k,x_i)$ for all $i=1,...,k$.
\end{enumerate}
\end{lem}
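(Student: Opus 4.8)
The whole lemma reduces to one fact that I would establish first: the polynomial $p_\sA$ of (\ref{eq:determantproductOneDim}) is \emph{symmetric} in $x_1,\dots,x_m$. This is precisely the assertion that $p_\sA$ is a Schur polynomial, but I would give a self-contained reason. For a permutation $\sigma$ of $\{1,\dots,m\}$, permuting the arguments permutes the columns of the determinant $f_\sA=|(x_i^{d_j})|$, so $f_\sA(x_{\sigma(1)},\dots,x_{\sigma(m)})=\sign(\sigma)\,f_\sA(x_1,\dots,x_m)$; the Vandermonde factor $\prod_{i<j}(x_j-x_i)$ transforms with the same sign, while the factor $(x_1\cdots x_m)^{d_1}$ is invariant. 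Solving (\ref{eq:determantproductOneDim}) for $p_\sA$, the two signs cancel, giving $p_\sA(x_{\sigma(1)},\dots,x_{\sigma(m)})=p_\sA(x_1,\dots,x_m)$. Thus $p_\sA$ is invariant under every permutation of its $m$ arguments, i.e.\ its value depends only on the multiset of arguments.

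For (i) I would then argue as follows. By definition $q_\sA(x_1,\dots,x_k)=p_\sA(x_1,x_1,\dots,x_k,x_k)$, where $x_j$ occupies the two argument slots $2j-1$ and $2j$. Transposing $x_a$ and $x_b$ in $q_\sA$ amounts to applying to the $m=2k$ arguments of $p_\sA$ the permutation exchanging the slots $2a-1\leftrightarrow 2b-1$ and $2a\leftrightarrow 2b$; by the symmetry of $p_\sA$ this leaves the value unchanged. Since transpositions generate the symmetric group on $\{1,\dots,k\}$, $q_\sA$ is symmetric.

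For (ii) I would simply compare the two lists of arguments fed to $p_\sA$. On the left, $q_{\sA,i}(x_1,\dots,x_k)$ feeds $p_\sA$ the list in which every $x_j$ with $j\neq i$ is doubled and $x_i$ appears once. On the right, $q_{\sA,k}(x_1,\dots,x_{i-1},x_{i+1},\dots,x_k,x_i)$ doubles its first $k-1$ entries $x_1,\dots,x_{i-1},x_{i+1},\dots,x_k$ and keeps its last entry $x_i$ single; this again doubles every $x_j$ with $j\neq i$ and uses $x_i$ once. The two argument lists coincide as multisets and differ only in their ordering, so the symmetry of $p_\sA$ yields the claimed identity.

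The only substantive ingredient is the symmetry of $p_\sA$, and after that both statements are bookkeeping about which variable occupies which slot. I expect the one place to be careful is the index shift in (ii): moving $x_i$ to the last slot of $q_{\sA,k}$ shifts the entries originally after position $i$ down by one, and one must check this produces exactly the pattern ``doubled everywhere except at $x_i$''. No deeper obstacle is anticipated.
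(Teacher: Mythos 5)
Your proof is correct and takes essentially the same route as the paper: both arguments reduce (i) and (ii) to the symmetry of $p_\sA$ and then perform the slot bookkeeping you describe. The only difference is that you prove the symmetry of $p_\sA$ directly from the determinant identity (the standard ratio-of-alternants argument), whereas the paper simply invokes the fact that $p_\sA$ is a Schur polynomial and hence symmetric.
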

\begin{proof}
(i): Since the Schur polynomial $p_{\sA}$ is symmetric, so is $q_{\sA}$.

(ii): We derive
\begin{align*}
q_{\sA,i}(x_1,...,x_k)
&= p_{\sA}(x_1,x_1,...,x_{i-1},x_{i-1},x_i,x_{i+1},x_{i+1},...,x_k,x_k)\\
&= p_{\sA}(x_1,x_1,...,x_{i-1},x_{i-1},x_{i+1},x_{i+1},...,x_k,x_k,x_i)\\
&= q_{\sA,k}(x_1,...,x_{i-1},x_{i+1},...,x_k,x_i).\qedhere
\end{align*}
\end{proof}

In the odd  case it suffices to prove formula (\ref{eq:oneDimDet2general}) below. All other determinants are then obtained by interchanging variables and Lemma \ref{symlemma0}(ii).

\begin{lem}\label{lem:onedimNA2}
Suppose that $\sA$ is of the form (\ref{sageneralcase}).
\begin{itemize}
\item[\em (i)]  If $m= 2k$ is even, then

\begin{equation}\label{eq:oneDimDet1general}
\begin{split}
& \det DS_{k,{\sA}}(c_1,...,c_k,x_1,...,x_k)=\\
& c_1\cdots c_k \cdot (x_1\cdots x_k)^{2d_1} \prod_{1\leq i < j \leq k} (x_j - x_i)^4 \cdot q_{\sA}(x_1,...,x_k).
\end{split}
\end{equation}

\item[\em (ii)] If $m=2k-1$ is odd, then
\begin{equation}\label{eq:oneDimDet2general}
\begin{split}
& \det (DS_{k-1,{\sA}},s_{\sA}(x_k))=\\
& c_1\cdots c_{k-1}\cdot (x_1\cdots x_{k-1})^{2d_1} x_k^{d_1} \cdot \prod_{1\leq i < j \leq k-1} (x_j - x_i)^4 \cdot \prod_{i=1}^{k-1} (x_k - x_i)^2 \cdot q_{{\sA},k}(x_1,...,x_k).
\end{split}
\end{equation}

\item[\em (iii)] $\cN_{\sA} = \left\lceil \frac{m}{2} \right\rceil$.
\end{itemize}
\end{lem}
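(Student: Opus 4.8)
The plan is to compute the relevant determinant explicitly by specializing the general Schur-polynomial factorization \eqref{eq:determantproductOneDim} to the confluent situation where atoms collide, exactly as in the monomial case of Lemma \ref{lem:onedimNA}. First I would treat the even case $m=2k$. Recall from \eqref{DS_k} that columns of $DS_{k,\sA}$ come in pairs $(s_\sA(x_i), c_i s_\sA'(x_i))$, so that
\[
\det DS_{k,\sA} = c_1\cdots c_k \cdot \det(s_\sA(x_1),s_\sA'(x_1),\dots,s_\sA(x_k),s_\sA'(x_k)).
\]
I would then express the Wronskian-type determinant on the right as the confluent limit of the ordinary Vandermonde-Schur determinant $f_\sA$ from \eqref{eq:determantproductOneDim}, replacing the $2k$ independent variables by $k$ merging pairs. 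Concretely, I would write $s_\sA'(x_i) = \lim_{h_i\to 0} h_i^{-1}(s_\sA(x_i+h_i)-s_\sA(x_i))$, pull the $h_i^{-1}$ factors out, and take the limit $h_i\to 0$ of $f_\sA(x_1,x_1+h_1,\dots,x_k,x_k+h_k)/(h_1\cdots h_k)$.

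The key computation is to track what each factor in \eqref{eq:determantproductOneDim} contributes under this confluence. The prefactor $(x_1\cdots x_m)^{d_1}$ becomes $(x_1\cdots x_k)^{2d_1}$ since each variable is doubled. The Vandermonde product $\prod_{i<j}(x_j-x_i)$ splits into three groups: the within-pair differences $(x_i+h_i)-x_i = h_i$, which exactly cancel the $h_1\cdots h_k$ denominator; and the two families of cross-pair differences, which in the limit each yield $\prod_{1\le i<j\le k}(x_j-x_i)^2$, combining with a further square from the two copies to give the total $\prod_{i<j}(x_j-x_i)^4$, matching the exponent already seen in \eqref{eq:oneDimDet1}. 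Finally, the Schur factor $p_\sA(x_1,x_1+h_1,\dots)$ converges to $p_\sA(x_1,x_1,\dots,x_k,x_k) = q_\sA(x_1,\dots,x_k)$ by the definition in Definition \ref{defintionq_a} and continuity. Assembling these gives \eqref{eq:oneDimDet1general}. For the odd case $m=2k-1$, I would set up the analogous limit but merge only $k-1$ pairs and leave $x_k$ as a single column $s_\sA(x_k)$; the bookkeeping produces the extra factor $\prod_{i=1}^{k-1}(x_k-x_i)^2$ (cross differences between $x_k$ and each merged pair), the lone prefactor $x_k^{d_1}$, and the Schur factor $q_{\sA,k}$, yielding \eqref{eq:oneDimDet2general}. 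Lemma \ref{symlemma0}(ii) then handles the other determinants by permuting variables.

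For part (iii), I would argue that $\cN_\sA = \lceil m/2\rceil$ by exhibiting a choice of atoms making the determinant nonzero, combined with the lower bound from Proposition \ref{NAlowerbound}. Since \eqref{Young} guarantees all nonzero coefficients of $p_\sA$ are positive, the specialized polynomials $q_\sA$ and $q_{\sA,k}$ have only nonnegative coefficients and are not identically zero, so choosing the $x_i$ pairwise distinct and, say, strictly positive forces $q_\sA(x_1,\dots,x_k) > 0$; with the $c_i$ positive and the Vandermonde factors nonzero, the full determinant in \eqref{eq:oneDimDet1general} or \eqref{eq:oneDimDet2general} is nonzero. This shows $DS_{\lceil m/2\rceil,\sA}$ attains full rank, giving $\cN_\sA \le \lceil m/2\rceil$; the matching lower bound $\cN_\sA \ge \lceil m/2\rceil = \lceil |\sA|/1 \cdot \tfrac{1}{2}\rceil$ follows from the $n=1$ case of Proposition \ref{NAlowerbound}, since $DS_{k,\sA}$ has $m$ rows and each atom supplies only $2$ columns.

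The main obstacle I anticipate is the careful bookkeeping of the confluent limit, namely verifying that the cross-pair Vandermonde differences collapse to exactly the fourth powers $\prod_{i<j}(x_j-x_i)^4$ and that no spurious factors of $h_i$ survive or vanish; this is precisely the step where the nontrivial structure lives, while the positivity argument for (iii) is then immediate from \eqref{Young}. One must also confirm that the iterated limits $h_i \to 0$ may be taken independently, which follows because after cancelling the $h_i$ denominators the resulting expression is a polynomial in the $h_i$, hence continuous.
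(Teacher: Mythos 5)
Your proposal is correct and follows essentially the same route as the paper: both reduce to the confluent limit of the Vandermonde--Schur factorization \eqref{eq:determantproductOneDim}, with the within-pair differences cancelling the $h_i$ denominators, the cross-pair differences collapsing to $\prod_{i<j}(x_j-x_i)^4$, the prefactor doubling to $(x_1\cdots x_k)^{2d_1}$, and the Schur factor specializing to $q_\sA$ (resp.\ $q_{\sA,k}$, with the remaining determinants handled via Lemma \ref{symlemma0}(ii)). Your part (iii) argument, positivity of the coefficients from \eqref{Young} giving a nonvanishing determinant at distinct positive atoms plus the column-count lower bound of Proposition \ref{NAlowerbound}, is exactly the paper's reasoning, stated slightly more explicitly.
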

\begin{proof}
(i): Clearly, $\partial_{c_i} S_k(C,X) = s_{\sA}(x_i)$ and $\partial_{x_i} S_k(C,X) = c_i s_{\sA}'(x_i)$. By the linearity of the determinant, the factor $c_1 \cdots c_k$ can be taken out, so we can assume without loss of generality that $c_1=...=c_k=1$. 

Let $m=2k$. We proceed in a similar manner as in the proof of Lemma \ref{lem:onedimNA}. Using (\ref{eq:determantproductOneDim}) we derive
\begin{align*}
&\det(s_\sA(x_1),s_\sA'(x_1),...,s_\sA'(x_k))\\
 &= \lim_{h_1\rightarrow 0} ... \lim_{h_k\rightarrow 0} \det\left(s_\sA(x_1),\frac{s_\sA(x_1+h_1)-s_\sA(x_1)}{h_1},...,\frac{s_\sA(x_k+h_k)-s_\sA(x_k)}{h_k}\right)\\
&= \lim_{h_1\rightarrow 0} ... \lim_{h_k\rightarrow 0} \frac{\det(s_\sA(x_1),s_\sA(x_1+h_1),...,s_\sA(x_k+h_k))}{h_1\cdots h_k}\\
&= \lim_{h_1\rightarrow 0} ... \lim_{h_k\rightarrow 0} \frac{f_{\sA}(x_1,x_1+h_1,x_2,x_2+h_2,...,x_k+h_k)}{h_1 \cdots h_k}\\&= \lim_{h_1\rightarrow 0} ... \lim_{h_k\rightarrow 0} \frac{\prod_{i=1}^k\left( h_i \prod_{j=i+1}^k (x_j{+}h_j {-} x_i)(x_j{-}x_i)(x_j {-} x_i {-} h_i) (x_j {+} h_j {-} x_i {-} h_i) \right)}{h_1\cdots h_k}\\ &\quad \quad \times (x_1 (x_1+h_1) \cdots x_k (x_k+h_k))^{d_1}~ p_{\sA}(x_1,x_1+h_1,x_2,x_2+h_2,...,x_k,x_k+h_k)\\
&= (x_1 \cdots x_k)^{2d_1} \cdot \prod_{i<j} (x_j - x_i)^4 \cdot p_{\sA}(x_1,x_1,x_2,x_2,...,x_k)\\
&= (x_1 \cdots x_k)^{2d_1} \cdot \prod_{i<j} (x_j - x_i)^4 \cdot q_{\sA}(x_1,...,x_k).
\end{align*}

(ii): The proof in the odd case $n=2k-1$ is similar.

(iii): Since  $q_{\sA}$ is not the zero polynomial and all nonzero coefficients are positive,  there are $x_1,...,x_k$ such that $\det(DS_{k,\sA})(x_1,...,x_k)\neq 0.$ Then $DS_{k,\sA}$ has full rank, so that  $\cN_\sA=k=\lceil m/2\rceil .$
\end{proof}

Now we turn to the homogeneous case and set 
\begin{align}\label{definiions}
{\sB} = \{x^{d_1} y^{d_m-d_1},\dots,x^{d_m}\},~~ \text{where}~ 0\leq d_1 < d_2 < ... < d_m,\, d_i\in \nset_0.
\end{align}

\begin{exa}\label{exmps:SchurPolynomialsHom}
\begin{enumerate}
\item In the case $\sB = \{x y^7, x^4 y^4,x^7 y\}$ we have
\[\det((s_{\sB}(x_i,y_i))_{i=1}^3) = x_1 y_1 x_2 y_2 x_3 y_3 \cdot \prod_{1\leq i< j\leq 3} (x_j y_i - x_i y_j)\cdot p_{\sB}(x_1,y_1,x_2,y_2,x_3,y_3)\]
with
\[p_{\sB}(x_1,y_1,x_2,y_2,x_3,y_3) = \prod_{1\leq i < j \leq 3}(x_i^2 y_j^2 + x_i y_i x_j y_j + x_j^2 y_i^2).\]

\item $\sB = \{x y^5, x^2 y^4, x^6\}$. Then we have
\[\det((s_{\sB}(x_i,y_i))_{i=1}^3) = x_1 x_2 x_3 \cdot \prod_{1\leq i< j\leq 3} (x_j y_i - x_i y_j)\cdot p_{\sB}(x_1,y_1,x_2,y_2,x_3,y_3)\]
with
\begin{align*}
p_{\sB}(x_1,x_2,x_3) =&\ x_1^3 y_2^3 y_3^3 + x_1^2 y_1 x_2 y_2^2 y_3^3 + x_1^2 y_1 y_2^3 x_3 y_3^2 + x_1 y_1^2 x_2^2 y_2 y_3^3\\
&+ x_1 y_1^2 x_2 y_2^2 x_3 y_3^2 + x_1 y_1^2 y_2^3 x_3^2 y_3 + y_1^3 x_2^3 y_3^3 + y_1^3 x_2^2 y_2 x_3 y_3^2\\
&+ y_1^3 x_2 y_2^2 x_3^2 + y_1^3 y_2^3 x_3^3.
\end{align*}
\end{enumerate}
\end{exa}

\begin{dfn}
For even $m=2k$ we define
\begin{equation}\label{defq_sB}
q_{\sB}(x_1,y_1,\dots,x_k,y_k):= 
(y_1\cdots y_k)^{2(d_m-d_1-m) +3} q_{\sA}\left(\frac{x_1}{y_1},\cdots,\frac{x_k}{y_k}\right).
\end{equation}
For odd $m=2k-1$ we set
\begin{equation}\label{defqbkpoly}
q_{{\sB},k}(x_1,y_1,...,x_k,y_k) := (y_1\cdots y_{k-1})^{2d_m-2d_1-3m+6} y_k^{d_m-d_1-m+1} \,  q_{{\sA},k}\left(\frac{x_1}{y_1},\dots,\frac{x_k}{y_k}\right).
\end{equation}
\end{dfn}

\begin{lem}\label{lemmainhosB}
Let $\sB$ be of the form (\ref{definiions}).
\begin{itemize}
\item[\em (i)] If $m=2k$ is even, then
\begin{equation*}
\begin{split}
 &\det D_{c,x} S_{k,\sB}(c_1,...,c_k,x_1,y_1,...,x_k,y_k)\\
 &= c_1\cdots c_k \cdot (x_1\cdots x_k)^{2d_1}\cdot \prod_{1\leq i < j \leq k} (x_j y_i - x_i y_j)^4 \cdot q_{\sB}(x_1,y_1,\dots,x_k,y_k).
\end{split}
\end{equation*}

\item[\em (ii)] If  $m=2k-1$ is odd, then
\begin{equation*}
\begin{split}
&\det (D_{c,x}S_{k-1,{\sB}}(c_1,...,c_{k-1},x_1,y_1,...,y_{k-1}),s_{\sB}(x_k,y_k))= c_1\cdots c_{k-1} (x_1\cdots x_{k-1})^{2d_1}\\
&\qquad\times x_k^{d_1} \prod_{1\leq i < j \leq k-1} (x_j y_i {-} x_i y_j)^4\cdot  \prod_{i=1}^{k-1} (x_k y_i {-} x_i y_k)^2 \cdot q_{{\sB},k}(x_1,...,x_k).
\end{split}
\end{equation*}

\item[\em (iii)] $q_\sB$ in (\ref{defq_sB}) and $q_{\sB,k}$ in (\ref{defqbkpoly}) are in $\nset_0[x_1,y_1,...,x_k,y_k]$.
\item[\em (iv)] $\cN_{\sB} = \left\lceil\frac{m}{2}\right\rceil$.
\end{itemize}
\end{lem}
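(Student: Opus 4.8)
The plan is to deduce every assertion from its inhomogeneous counterpart, Lemma~\ref{lem:onedimNA2}, via the dehomogenization identity
\[
s_{\sB}(x,y) = y^{d_m}\, s_{\sA}(x/y), \qquad \partial_x s_{\sB}(x,y) = y^{d_m-1}\, s_{\sA}'(x/y),
\]
where $\sA = \{x^{d_1},\dots,x^{d_m}\}$ is the one-variable family obtained by setting $y=1$. The first identity is immediate from $y^{d_m}(x/y)^{d_j}=x^{d_j}y^{d_m-d_j}$, and the second follows by differentiating in $x$ with $y$ held fixed, which is the only variable differentiated in $D_{c,x}S_{k,\sB}$.

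For (i) I would write $t_i := x_i/y_i$ and note that the columns of $D_{c,x}S_{k,\sB}$ are $s_{\sB}(x_i,y_i) = y_i^{d_m}\, s_{\sA}(t_i)$ and $c_i\,\partial_x s_{\sB}(x_i,y_i) = c_i\,y_i^{d_m-1}\, s_{\sA}'(t_i)$. Pulling the scalar $\prod_{i=1}^k c_i\, y_i^{2d_m-1}$ out of the determinant by multilinearity leaves exactly $\det(s_{\sA}(t_1),s_{\sA}'(t_1),\dots,s_{\sA}(t_k),s_{\sA}'(t_k))$, which Lemma~\ref{lem:onedimNA2}(i) (all coefficients set to $1$) evaluates as $(t_1\cdots t_k)^{2d_1}\prod_{i<j}(t_j-t_i)^4\, q_{\sA}(t_1,\dots,t_k)$. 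Substituting back $t_i=x_i/y_i$ turns $(t_1\cdots t_k)^{2d_1}$ into $(x_1\cdots x_k)^{2d_1}$ and $\prod_{i<j}(t_j-t_i)^4$ into $\prod_{i<j}(x_jy_i-x_iy_j)^4$, each step releasing negative powers of the $y_i$. Collecting all surviving powers of $y_i$ and comparing with the definition~(\ref{defq_sB}) of $q_{\sB}$ yields the claimed formula. The odd case (ii) runs identically: one uses the mixed determinant, observes that the last column $s_{\sB}(x_k,y_k)=y_k^{d_m}s_{\sA}(t_k)$ carries no factor $c_k$, invokes Lemma~\ref{lem:onedimNA2}(ii), and matches the outcome to the definition~(\ref{defqbkpoly}) of $q_{\sB,k}$.

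For (iii) the point is that the homogenizing exponent of each $y_i$ in (\ref{defq_sB}) and (\ref{defqbkpoly}) is large enough to clear every denominator produced by the substitution $t_i=x_i/y_i$ in $q_{\sA}$ resp.\ $q_{\sA,k}$. Since $p_{\sA}$ is a Schur polynomial whose largest part is $\lambda_1 = d_m-d_1-(m-1)$, its degree in any single variable equals $\lambda_1$; hence $q_{\sA}$, in which each variable occurs in two slots, has degree at most $2\lambda_1$ in each $x_i$, while in $q_{\sA,k}$ the variable $x_k$ (one slot) has degree at most $\lambda_1$. In each case the corresponding $y$-exponent is at least as large as these per-variable degrees, so $q_{\sB}$ and $q_{\sB,k}$ are genuine polynomials. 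Nonnegativity and integrality of their coefficients are inherited from $q_{\sA}$ and $q_{\sA,k}$, which in turn inherit them from the positivity of the Schur coefficients recorded in~(\ref{Young}), since setting variables equal and multiplying by monomials both preserve membership in $\nset_0[\,\cdot\,]$.

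Finally, (iv) follows from (i)--(iii). The lower bound $\cN_{\sB}\geq\left\lceil\frac{m}{2}\right\rceil$ is Proposition~\ref{NAlowerbound}, applicable because all functions in $\sB$ are homogeneous of the common degree $d_m$ in $n=2$ variables. For the reverse inequality, the determinants computed in (i)--(ii) have $q_{\sB}$ (resp.\ $q_{\sB,k}$) as a factor, which is a nonzero polynomial with nonnegative coefficients by (iii); hence these determinants do not vanish identically, so $DS_{k,\sB}$ with $k=\left\lceil\frac{m}{2}\right\rceil$ attains full rank $m$ at suitable points and $\cN_{\sB}\leq\left\lceil\frac{m}{2}\right\rceil$. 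I expect the exponent bookkeeping in (i)--(ii)---matching the leftover powers of the $y_i$ to the exact prefactors fixed in the definitions---to be the main nuisance, together with the verification in (iii) that this same exponent dominates the per-variable degree of the Schur polynomial and hence secures polynomiality.
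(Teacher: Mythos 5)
Your handling of (i), (ii) and (iv) is essentially the paper's own argument: dehomogenize via $s_{\sB}(x,y)=y^{d_m}s_{\sA}(x/y)$ and $\partial_x s_{\sB}(x,y)=y^{d_m-1}s_{\sA}'(x/y)$, pull the scalars out by multilinearity, invoke Lemma \ref{lem:onedimNA2}, convert $(t_j-t_i)$ into $(x_jy_i-x_iy_j)$, and for (iv) combine the column count of Proposition \ref{NAlowerbound} with non-vanishing of the computed determinants. Part (iii) is where you genuinely depart from the paper: the paper makes no degree estimates at all; it notes that the determinant is a polynomial (Laplace expansion) and shows that a hypothetical term of $q_{\sB}$ with a maximal negative power $y_k^{-l}$ could not cancel in the product formula, a contradiction. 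Your Schur-theoretic degree count ($\deg_{x_i}p_{\sA}=\lambda_1=d_m-d_1-(m-1)$ per variable, hence at most $2\lambda_1$ per variable of $q_{\sA}$ and at most $\lambda_1$ in $x_k$ for $q_{\sA,k}$) is more explicit and quantifies exactly how much homogenizing power of $y$ is needed, at the price of requiring this extra input about Schur polynomials.

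However, the ``exponent bookkeeping'' you defer is not a mere nuisance; in the odd case it is precisely where your written claims break down. Matching degrees in (ii) (the determinant is homogeneous of degree $2d_m-1$ in each pair $(x_i,y_i)$, $i<k$) forces the $y_i$-exponent of $q_{\sB,k}$ to be
\[
(2d_m-1)-2d_1-4(k-2)-2 \;=\; 2d_m-2d_1-4k+5 \;=\; 2(d_m-d_1-m)+3,
\]
whereas (\ref{defqbkpoly}) prints $2d_m-2d_1-3m+6$; the two agree only for $m=3$, the only odd case appearing in the paper's examples. Consequently your step ``matches the outcome to the definition (\ref{defqbkpoly})'' fails for odd $m\geq 5$, and so does the inequality you need in (iii): it would require $2d_m-2d_1-3m+6\geq 2\lambda_1=2d_m-2d_1-2m+2$, i.e.\ $m\leq 4$. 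Concretely, for $\sA=\{1,x^2,x^3,x^5,x^6\}$ (the family of Example \ref{exmp:WORKS}, $m=5$, $k=3$, $d_1=0$, $d_m=6$) the Schur polynomial contains the monomial $x_1^2x_2^2x_3x_4$, so $q_{\sA,3}(x_1,x_2,x_3)=p_{\sA}(x_1,x_1,x_2,x_2,x_3)$ contains $x_1^4x_2^2$, of degree $4$ in $x_1$; the printed exponent is $2d_m-2d_1-3m+6=3<4$, and since all coefficients are nonnegative nothing cancels, so the right-hand side of (\ref{defqbkpoly}) is not even a polynomial. This is an inconsistency (a typo) in the paper itself --- parts (ii), (iii) and (\ref{defqbkpoly}) are mutually incompatible for $m\neq 3$ on homogeneity grounds alone --- rather than a defect of your method: with the corrected exponent $2(d_m-d_1-m)+3=2\lambda_1+1$ your bound $2\lambda_1$ suffices for $i<k$, and for $x_k$ the exponent $d_m-d_1-m+1=\lambda_1$ matches your bound exactly, so your degree argument (and hence (iv), which only needs some nonzero polynomial factor) closes. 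But a complete write-up must actually carry out this bookkeeping and correct, or at least flag, the exponent in (\ref{defqbkpoly}); as it stands, the proposal asserts a match that does not hold.
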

\begin{proof}
(i): Again it suffices to prove the formulas in the case $c_1=...=c_k=1$. We set  $u = \frac{x}{y}$ and $u_i = \frac{x_i}{y_i}$. Using the relation  $\partial_x = y^{-1} \partial_u$ and equations (\ref{eq:oneDimDet1}) and (\ref{defq_sB}) we compute 
\begin{align*}
&\det D_{c,x} S_{k,{\sB}}(c_1,...,c_k,x_1,y_1,...,x_k,y_k)\\
&= \det (s_{\sB}(x_1,y_1),\partial_x s_{\sB}(x_1,y_1),...,s_{\sB}(x_k,y_k),\partial_x s_{\sB}(x_k,y_k))\\
&= (y_1\cdots  y_k)^{2d_m} \cdot \det(s_{\sA}(u_1),\partial_x s_{\sA}(u_1),...,s_{\sA}(u_k),\partial_x s_{\sA}(u_k))\\
&= (y_1\cdots  y_k)^{2d_m-1} \cdot \det(s_{\sA}(u_1),\partial_u s_{\sA}(u_1),...,s_{\sA}(u_k),\partial_u s_{\sA}(u_k))\\
&= (y_1\cdots y_k)^{2d_m-1} (u_1\cdots u_k)^{2d_1} \cdot \prod_{1\leq i < j\leq k} (u_j-u_i)^4 \cdot q_{\sA}(u_1,...,u_k)\\
&=  (x_1\cdots x_k)^{2d_1}(y_1\cdots y_k)^{2(d_m-d_1-m)+3} \prod_{1\leq i < j\leq k} (x_j y_i-x_i y_j)^4 \cdot q_{\sA}\left(\frac{x_1}{y_1},...,\frac{x_k}{y_k}\right)\\
&=  (x_1\cdots x_k)^{2d_1} \prod_{1\leq i < j\leq k} (x_j y_i-x_i y_j)^4  \cdot q_{{\sB}}(x_1,y_1,...,x_k,y_k).
\end{align*}

(ii): We proceed in a similar manner and derive
\begin{align*}
&\det (s_{\sB}(x_1,y_1),\partial_x s_{\sB}(x_1,y_1),...,s_{\sB}(x_k,y_k))\\
&= (y_1\cdots  y_{k-1})^{2d_m} \cdot y_k^{d_m}\cdot \det(s_{\sA}(u_1),\partial_x s_{\sA}(u_1),...,s_{\sA}(u_k))\\
&= (y_1\cdots  y_{k-1})^{2d_m-1} \cdot y_k^{d_m}\cdot \det(s_{\sA}(u_1),\partial_u s_{\sA}(u_1),...,s_{\sA}(u_k))\\
&= (u_1\cdots u_{k-1})^{2d_1} u_k^{d_1} (y_1\cdots y_{k-1})^{2d_m-1} y_k^{d_m} \prod_{1\leq i < j\leq k-1} (u_j-u_i)^4 \prod_{i=1}^{k-1} (u_k-u_i)^2\\
&\qquad\times q_{\sA,k}(u_1,...,u_k)\\
&=  (x_1\cdots x_{k-1})^{2d_1} x_k^{d_1} (y_1\cdots y_{k-1})^{2d_m-2d_1-3m+6} y_k^{d_m-d_1-m+1} \prod_{1\leq i < j\leq k-1} (x_j y_i-x_i y_j)^4\\
&\qquad\times \prod_{i=1}^{k-1} (x_k y_i - x_i y_k)^2 \cdot q_{\sA,k}\left(\frac{x_1}{y_1},...,\frac{x_k}{y_k}\right)\\
&=  (x_1\cdots x_{k-1})^{2d_1} x_k^{d_1} \prod_{1\leq i < j\leq k} (x_j y_i-x_i y_j)^4  \cdot \prod_{i=1}^{k-1} (x_k y_i - x_i y_k)^2 \cdot q_{{\sB},k}(x_1,y_1,...,x_k,y_k).
\end{align*}

(iii): First we show that $q_\sB$ and $q_{\sB,k}$ are polynomials. That they are polynomials in $x_1,...,x_k$ is clear, since $q_\sA$ and $q_{\sA,k}$ are polynomials in the coordinates and all $x_i$ appear only with non-negative exponent in the definitions. Therefore, it suffices to show that they are also polynomials in all $y_i$. We will only prove the statement for $q_\sB$, for $q_{\sB,k}$ the same chain of arguments holds.

Assume  the contrary. Then  $q_\sB$ contains a term with $y_k^{-l}$ for some $l>0$ with non-zero coefficient.  Let $l$ be  the largest  such $l$ and let $f(x,y):=\sum_i a_i x^{\alpha_i} y^{\beta_i}$ be the factor of $y_k^{-l}$  in $q_\sB$. Since $f$ is non-zero by assumption,  there are  $Z = (x_1,y_1,...,x_k,y_k) \in \rset^{2k}$ and  $\varepsilon>0$ such that $f$ is non-zero on  the ball $B_\varepsilon(Z)$ centered at $Z$ with radius $\varepsilon$.

On the other hand, we expand $\prod_{1\leq i < j\leq k} (x_j y_i-x_i y_j)^4$ and let $g(x,y) :=\sum_i b_i x^{\gamma_i} y^{\delta_i}$ be the sum of all terms therein which contain no $y_k$.  Then $g$ is a polynomial in all $x_i$ and $y_i$ and $g$ is not the zero-polynomial. Hence $g$ is not identically zero on $B_\varepsilon(Z)$ and so is
\[fg = \sum_{i,j} a_i b_j x^{\alpha_i + \gamma_j} y^{\beta_i+\delta_i}.\]
From the Laplace expansion it follows that the determinant
\[\det (s_{\sB}(x_1,y_1),\partial_x s_{\sB}(x_1,y_1),...,s_{\sB}(x_k,y_k),\partial_x s_{\sB}(x_k,y_k))\tag{$*$}\]
is a polynomial in  $x_i, y_j$. Further, $fg$ appears in the expansion of the product
\[(x_1\cdots x_k)^{2e_1} \prod_{1\leq i < j\leq k} (x_j y_i-x_i y_j)^4  \cdot q_{{\sB}}(x_1,y_1,...,x_k,y_k)\tag{$**$}\]
and by the maximality of $l$ it does not cancel. Hence ($*$) does not contain a term  with $y_k^{-l}$ but ($**$) does. Since both are equal by (i), we get a contradiction.  Thus, $q_\sB$ is a polynomial in all $y_i$.

It remains to show that all coefficients of $q_\sB$ are in $\nset_0$. Since $q_\sA$  comes from the Schur polynomial $p_\sA$ (see  (\ref{Young})), its coefficients are in $\nset_0$. This is not changed by multiplication with $(y_1\cdots y_k)^{2(d_m-d_1-m) +3}$, so $q_\sB$ has $\nset_0$-coefficients as well.

(iv): By (iii) all nonzero coefficients of $q_{\sB}$ are positive integers. Hence, by (i) and (ii), we can find real numbers $x_1,y_1,\dots,x_k,y_k$ such that the corresponding determinants are non-zero. Hence $\cN_{\sB} = k=\left\lceil\frac{m}{2}\right\rceil$.
\end{proof}

\begin{exa}\label{exmps:SchurPolynomialsHom2}
\begin{enumerate}
\item Let $\sB = \{x y^7, x^4 y^4, x^7 y\}$. Then we have
\begin{equation*}
\begin{split}
&\det(s_{\sB}(x_1,y_1),\partial_x s_{\sB}(x_1,y_1),s_{\sB}(x_2,y_2))\\ &= 3x_1^4 y_1^5 x_2 y_2 (x_1 y_2 - x_2 y_1)^2 (x_1^2 y_2^2 + x_1 x_2 y_1 y_2 + x_2^2 y_1^2)^2.
\end{split}
\end{equation*}

\item $\sB = \{x y^5, x^2 y^4, x^6\}$. Then
\begin{equation*}
\begin{split}
&\det(s_{\sB}(x_1,y_1),\partial_x s_{\sB}(x_1,y_1),s_{\sB}(x_2,y_2))\\
&= x_1^2 x_2 y_1^4 (x_1 y_2 - x_2 y_1)^2(4 x_1^3 y_2^3 + 3x_1^2 x_2 y_1 y_2^2 + 2 x_1 x_2^2 y_1^2 y_2 + x_2^3 y_1^3).
\end{split}
\end{equation*}
\end{enumerate}
\end{exa}

The following theorem is the main result of this section. It gives sufficient conditions for the validity of formula (\ref{qbnb}) concerning the Carath\'eodory number $ \cat_{\sB}$.

\begin{thm}\label{thm:onedimCara}
Let $m,d_1,d_2,\dots,d_m,d\in\nset$ be such that $0= d_1 <...<d_m=2d$, put $\sA=\{1,x^{d_2},...,x^{d_m}\}$, ${\sB}=\{y^{2d},x^{d_2}y^{2d-d_2},...,x^{d_{m-1}}y^{2d-d_{m-1}},x^{2d}\}$, and $\cZ := \cZ(q_{{\sA}})$ if $m$ is even or $\cZ := \cZ(q_{\sA,1})\cap ...\cap \cZ(q_{{\sA},k})$ if $m=2k-1$ is odd, where $q_{\sA}$ and $ q_{{\sA},j}$ are as in Definition \ref{defintionq_a}. Suppose that
\begin{equation}\label{condition:one-dim-Caratheodory}
(x_1,...,x_k)\in\cZ\folgt\exists i\neq j: x_i = x_j.
\end{equation}
Then
\begin{align}\label{qbnb}
\cat_\sA =\cat_{\sB} = \cN_{\sA} = \cN_\sB =\left\lceil \frac{m}{2}\right\rceil.
\end{align}
\end{thm}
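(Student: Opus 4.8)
The plan is to establish the two lower bounds and the two upper bounds separately, deriving everything from a single Jacobian computation. Put $k:=\lceil m/2\rceil$. Since $\sA$ and $\sB$ consist of polynomials, hence lie in $C^\infty$, the hypothesis $r>\cN_\sA(n+1)-m$ of Theorem \ref{densethm} is vacuously satisfied; together with $\cN_\sA=\cN_\sB=k$ from Lemmas \ref{lem:onedimNA2}(iii) and \ref{lemmainhosB}(iv) this gives the lower bounds $\cat_\sA\ge k$ and $\cat_\sB\ge k$. Thus only the reverse inequalities $\cat_\sA\le k$ and $\cat_\sB\le k$ remain, and everything hinges on exploiting hypothesis (\ref{condition:one-dim-Caratheodory}).

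The technical heart is the following rigidity statement: under (\ref{condition:one-dim-Caratheodory}), \emph{every} configuration of $N\ge k$ pairwise distinct nodes with positive weights is regular, i.e.\ $DS_{N}$ has full rank $m$. I would prove this by selecting any $k$ of the $N$ nodes and exhibiting a nonzero maximal minor of the corresponding $DS_{k}$ from the explicit product formulas. In the even case $m=2k$ the relevant minor is the full determinant of Lemma \ref{lem:onedimNA2}(i) (resp.\ \ref{lemmainhosB}(i)), whose only possibly vanishing factor is $q_\sA$ (resp.\ $q_\sB$); by the contrapositive of (\ref{condition:one-dim-Caratheodory}) distinct nodes force $q_\sA\ne 0$, so the minor, and hence $DS_N$, has full rank. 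In the odd case $m=2k-1$ the maximal minors are the determinants of Lemma \ref{lem:onedimNA2}(ii) (resp.\ \ref{lemmainhosB}(ii)) obtained by deleting one derivative column, and these are proportional to the $q_{\sA,i}$; again (\ref{condition:one-dim-Caratheodory}) guarantees that for distinct nodes at least one $q_{\sA,i}$ is nonzero, so some minor survives. For $\sB$ one must also allow the single projective node $[1:0]$ at infinity: there the factor $(x_jy_i-x_iy_j)$ reduces to $\pm y_j\ne 0$ and, because $d_1=0$, the prefactor $(x_1\cdots x_k)^{2d_1}$ is trivial, so the argument goes through once one checks that $q_\sB$ (a genuine polynomial with nonnegative integer coefficients by Lemma \ref{lemmainhosB}(iii)) does not vanish at such a configuration.

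For the upper bound on $\cat_\sB$ I would exploit the compactness of $\pset(\rset^1)$. Condition (\ref{cond+}) holds with $e=x^{2d}+y^{2d}\in\lin\sB$, so Proposition \ref{propstoboundary} and inequality (\ref{caboundary}) apply. Let $s\in\partial\cS_\sB$ and let $\mu=\sum_{i=1}^N c_i\delta_{\xi_i}$ be a minimal representing measure, so $N=\cat_\sB(s)$, the $\xi_i$ are distinct and $c_i>0$. By Proposition \ref{prop:moreOnMomentCone}(iii) the matrix $DS_{N,\sB}$ does not have full rank; but the rigidity statement shows that $N\ge k$ would force full rank, a contradiction. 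Hence $\cat_\sB(s)\le k-1$ for every boundary sequence, and (\ref{caboundary}) yields $\cat_\sB\le (k-1)+1=k$, so $\cat_\sB=k$.

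Finally, to obtain $\cat_\sA\le k$ I would split $\cS_\sA$ into its boundary and its interior. For $s\in\partial\cS_\sA\cap\cS_\sA$ the argument is identical to the previous paragraph: a minimal representing measure lives on $\rset$, its nodes are finite and distinct, Proposition \ref{prop:moreOnMomentCone}(ii)--(iii) denies full rank (no compactness is needed, since it only uses that $s$ is not an interior point), and rigidity forces $\cat_\sA(s)\le k-1$. For $s\in\mathrm{int}\,\cS_\sA$ I would pass to the compactification: since $\overline{\cS_\sA}=\cS_\sB$ (the only extra ray being the node at $[1:0]$), $s$ is also interior to $\cS_\sB$, so $\cat_\sB\le k$ produces a $k$-atomic regular representing measure $\nu$ on $\pset(\rset^1)$ (regular by rigidity). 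If $\nu$ has no atom at $[1:0]$ we dehomogenize and are done; otherwise the full rank of $\nu$ lets the implicit function theorem slide the node at infinity to a large finite point while adjusting the remaining nodes so that $\nu$ still represents $s$, the weights staying positive by openness and the node count staying $k$. This yields an at most $k$-atomic representing measure supported in $\rset$, whence $\cat_\sA(s)\le k$; combining the two cases gives $\cat_\sA\le k$. The main obstacle I anticipate is precisely this transfer from $\sB$ back to $\sA$: controlling the node at infinity $[1:0]$ and the non-compactness of $\rset$. The two delicate points are verifying that $q_\sB$ does not vanish at configurations involving $[1:0]$ (so that rigidity, hence the relocation, applies there) and ensuring a genuinely $k$-atomic regular projective representation exists before relocating; both rest on the homogeneous determinant formula of Lemma \ref{lemmainhosB} and on the normalization $d_1=0$, $d_m=2d$ built into the statement.
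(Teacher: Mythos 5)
Your lower bounds and the finite-node part of your ``rigidity'' argument coincide with the paper's proof: $\cN_\sA=\cN_\sB=\left\lceil m/2\right\rceil$ from Lemmas \ref{lem:onedimNA2}(iii) and \ref{lemmainhosB}(iv), the inequality $\cN_\sA\leq\cat_\sA,\cat_\sB$ from Theorem \ref{densethm}, and the observation that condition (\ref{condition:one-dim-Caratheodory}) together with the product formulas forces every singular configuration of pairwise distinct \emph{finite} nodes to have fewer than $k$ atoms. The genuine gap is the node at infinity, and both places where you flag it as delicate actually break down. Your hoped-for check that $q_\sB$ does not vanish at configurations containing $[1{:}0]$ is false: with $d_1=0$ every entry of $s_\sB$ except the last carries a positive power of $y$, so $s_\sB(1,0)$ and $\partial_x s_\sB(1,0)$ are both multiples of $(0,\dots,0,1)^T$ and the determinant of Lemma \ref{lemmainhosB}(i) vanishes identically on such configurations, while the factors $(x_jy_i-x_iy_j)$ there equal $1$ for nodes $(x_i,1)$ and $(1,0)$; hence $q_\sB$ itself must vanish there. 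Worse, rigidity at infinity is not merely unverifiable by this route but false in general: in the correct local coordinate $u=y/x$ at $[1{:}0]$ one has $\partial_u s_\sB(1,0)=0$ unless $d_{m-1}=2d-1$, so whenever $d_{m-1}<2d-1$ (which the hypotheses allow) \emph{every} configuration containing $[1{:}0]$ is singular. Consequently your claim that $\cat_\sB(s)\leq k-1$ for every boundary sequence is unproven---a minimal representing measure with an atom at infinity escapes the contradiction---and your argument only yields $\cat_\sB\leq k+1$ via (\ref{caboundary}). The same defect undermines the transfer to $\cat_\sA$; moreover, in the even case $m=2k$ the matrix $DS_{k,\sB}$ is square ($m$ rows, $2k$ parameters), so full rank would make the representing measure locally \emph{unique}, leaving no implicit-function-theorem freedom to ``slide'' the atom at infinity to a finite point.

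The paper avoids all of this by never analyzing an arbitrary minimal measure of an arbitrary boundary point. For any $s\in\cS_\sB$ it applies Proposition \ref{propstoboundary} with $x=(1,0)$ \emph{first}: by maximality of $c_s(1,0)$, the boundary sequence $s'=s-c_s(1,0)s_\sB(1,0)$ has no representing measure containing $(1,0)$ as an atom, so all atoms of all its representing measures are finite and distinct, Proposition \ref{prop:moreOnMomentCone}(iii) plus (\ref{condition:one-dim-Caratheodory}) gives $\cat_\sB(s')\leq k-1$, and hence $\cat_\sB(s)\leq k$. For $s\in\mathrm{int}\,\cS_\sA$ the paper then perturbs the subtraction direction from $s_\sB(1,0)$ to a nearby $s_\sB(1,\delta)$ (the tube argument), so that the decomposition $s=s'_\delta+c_s(1,\delta)s_\sB(1,\delta)$ consists entirely of atoms in $\rset$, giving $\cat_\sA(s)\leq k$. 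If you want to repair your proof, this reordering---subtract the (possibly perturbed) mass at infinity \emph{before} invoking singularity of boundary measures, rather than attempting a regularity statement at $[1{:}0]$---is the missing idea.
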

\begin{proof}
Recall that $\cS_\sA$ and $\cS_\sB$ denote the moment cones of $\sA$ and $\sB$, respectively. We  set $\partial^* \cS_\sA := \partial\cS_\sA\cap\cS_\sA$.

By Lemmas \ref{lem:onedimNA2}(iii) and \ref{lemmainhosB}(iv) we have $\cN_{\sA} = \cN_{\sB} = \left\lceil \frac{m}{2}\right\rceil$. Further,  $\cN_{\sA}\leq \cat_\sA$ and $\cN_{\sA}\leq \cat_\sB$ by Theorem \ref{densethm}. Therefore, it suffices to show that $\cat_\sA\leq\cN_\sA$ and 
$\cat_\sB\leq\cN_\sA$.

First we prove that $\cat_\sB\leq\cN_\sA.$ 

Let $s\in\cS_\sB$. Since $\cX=\pset(\rset)$ is compact and condition (\ref{cond+}) is satisfied (with $e(x,y):=x^{2d}+y^{2d}\in {\cB}$),  Proposition \ref{propstoboundary} applies with $x=(1,0).$ Hence the supremum $c_s(1,0):=\sup\, \{ c\in \rset: s- c s_{\sB}(1,0) \in \cS_\sB  \}$ is attained and $s':= s - c_s(1,0) s_{\sB}(1,0)\in\partial\cS_\sB$. By Proposition \ref{prop:moreOnMomentCone}(iii) all representing measures $(C',X')$ of $s'$ are singular. They do not contain $(1,0)$ as an atom. (Indeed,  otherwise $c_s(1,0)$ could be increased which contradicts to the maximality of $c_s(1,0)$.) Since the polynomials of $\sB$ are homogeneous, we can assume without loss of generality  that\ $X_i' = (x_i',1)$ with $x_i'$ pairwise different, say $x_1' < x_2' < ... < x_l'$, and $s'\in\partial^*\cS_\sA$, i.e., $s'$ is a boundary moment sequence of $\cS_\sA$. But from (\ref{condition:one-dim-Caratheodory}) and Lemma \ref{lemmainhosB},(i) and (ii), it follows that $l<\cN_\sA$, that is, $\cat_{\sB}(s) \leq l + 1 \leq \cN_\sB = \cN_\sA$. This completes the proof  of the inequality $\cat_\sB\leq \cN_\sA$.

Next we show that $\cat_\sA\leq\cN_\sA.$ 

If $s\in\partial^*\cS_\sA$, then   $\cat_\sA(s)<\cN_\sA$ by the preceding proof. Now let   $s\in\mathrm{int}\, \cS_\sA = \mathrm{int}\, \cS_\sB$. Then $\cat_\sB(s)\leq\cN_\sA$  by the preceding paragraph  and it suffices  to show that $s$ has an at most $\cN_\sA$-atomic representing measure which does not have an atom at $(1,0)$. We choose $\varepsilon > 0$ such that\ $B_\varepsilon(s)\subseteq \mathrm{int}\ \cS_\sA$.   

Let $c_t(x)$ be defined by (\ref{supcx}).  
Since $t\mapsto L_t$ is a continuous map of $\rset^m\to \cA^*$, $t\mapsto L_t(e)$ is continuous. Hence,  $c_t(x)\leq e(x)^{-1}L_t(e)$ (by Proposition \ref{propstoboundary}) is bounded from above on $ \overline{B_\varepsilon(s)}$.  Then the supremum $C$ of $c_t(1,0)$ on $\overline{B_\varepsilon(s)}$ is finite. Let
\[ T := \bigcup_{c\in[0,C+1]} \overline{B_\varepsilon(s-c\cdot s_\sB(1,0))}\]
be the $\varepsilon$-tube around the line $\gamma := s-[0,C+1]\cdot s_\sB(1,0)$. Write $T = T_1 \cup T_2 \cup T_3$ with $T_2 := T\cap\partial\cS_\sB$, $T_1 := T\cap\mathrm{int}\ \cS_\sB$, and $T_3 := T\setminus (T_1\cup T_2)$, i.e., $T_1$ is the part inside $\cS_\sB$, $T_3$ is the part outside $\cS_\sB$, and $T_2$ is the boundary part of $\cS_\sB$ in $T$. Since $\cS_\sB$ is closed and convex, $T_2$ is closed and every path in $T$ starting in $T_1$ and ending in $T_3$ contains at least one point in $T_2$. By construction, $t' := t - c_t(1,0) s_\sB(1,0)\in T_2$ for all $t\in T_1$ and no representing measure of $t'$ contains $(1,0)$ as an atom, i.e., $T_2\subset \partial^*\cS_\sA$. Then $\gamma = s - [0,1]\cdot (C+1) s_\sB(1,0) \subset T$ and $s\in\gamma\cap T_1$ and $s - (C+1) s_\sB(1,0)\in\gamma\cap T_3$, so that $s' = s - c_s(1,0) s_\sB(1,0)\in T_2$. Since $s_\sB$ is continuous and $C<\infty$ there is a $\delta> 0$ such that
\[\|(s - (C+1) s_\sB(1,0)) - (s - (C+1) s_\sB(1,\delta)) \| = (C+1)\| s_\sB(1,0) - s_\sB(1,\delta) \| <\varepsilon.\]
Thus, $s - (C+1) s_\sB(1,\delta)\in T_3$. Then $\gamma_\delta := s - [0,1]\cdot (C+1) s_\sB(1,\delta) \subset T$ and $s\in\gamma_\delta\cap T_1$ and $s - (C+1) s_\sB(1,\delta)\in\gamma_\delta\cap T_3$, i.e.,
\[s'_\delta = s - c_s(1,\delta) s_\sB(1,\delta)\in T_2\subset \partial^* \cS_\sA.\]
Summarizing, $s = s'_\delta + c_s(1,\delta) s_\sB(1,\delta)$ and $s'_\delta$ has a $k$-atomic representing measure ($k<\cN_\sA$) which has no atom at  $(1,0)$. Therefore, $s$ has an  $l$-atomic presenting measure ($l\leq \cN_\sA$) which has no atom at $(1,0)$. This proves $\cat_\sA(s)\leq\cN_\sA$.
\end{proof}

We illustrate the preceding by the following examples.

\begin{exa}\label{exmp:WORKS}
Let $\sA = \{1,x^2,x^3,x^5,x^6\}$ and  $\sB = \{y^6,x^2 y^4, x^3 y^3, x^5 y, x^6\}$, that is, $m=5$. Then we have
\begin{equation}\label{eq:WORKS-determinant}
\det(s_{\sA}(x),s_{\sA}'(x),s_{\sA}(y),s_{\sA}'(y),s_{\sA}(z))
= (x-y)^4(x-z)^2(y-z)^2 \cdot f(x,y,z),
\end{equation}
where
\begin{equation}
\begin{split}
f(x,y,z):=&\ xy(x^3 y + 4x^2 y^2 + xy^3 + 2x^3 z + 10x^2 y z + 10 x y^2 z\\
&\qquad + 2y^3 z + 4x^2 z^2 + 7 x y z^2 + 4y^2 z^2).
\end{split}
\end{equation}
This implies $\cN_{\sB}=\cN_\sA = 3$ as also  proved in Lemma \ref{lem:onedimNA} and \ref{lemmainhosB}. Hence $\cat_{\sA} \geq 3$. From the Richter--Tchakaloff Theorem (Proposition \ref{richtercor}) we find $\cat_{\sA} \leq m = 5$, while Theorem \ref{worstupperbound} gives a better bound $\cat_{\sA} \leq m - 1 = 4$.

To apply Theorem \ref{thm:onedimCara} we have to check that the assumptions are satisfied. Clearly, $d_1 = 0$ and $d_m=6$ is even. It remains to show that (\ref{condition:one-dim-Caratheodory}) is true. By symmetry it  suffices to verify (\ref{condition:one-dim-Caratheodory}) for
\[f_1(x,y,z) := f(x,y,z),\ f_2(x,y,z) := f(y,z,x),\ \text{and}\ f_3(x,y,z) := f(z,x,y).\]
Set $\cZ := \cZ(f_1)\cap\cZ(f_2)\cap\cZ(f_3)$ and let $X=(x,y,z)\in\cZ$. If $X=0$, then (\ref{condition:one-dim-Caratheodory}) holds. Now let $X\neq 0$. Since $f$ is homogeneous, we can scale $X$ such that $x^2 + y^2 + z^2 = 1$. Then we derive (for instance, by using spherical coordinates) 
%
%
\begin{equation}\label{eq:zeroSetWORKSexample}
\cZ(f_1)\cap\cZ(f_2)\cap\cZ(f_3) = \{(\pm 1,0,0),(0,\pm 1,0),(0,0,\pm 1)\},
\end{equation}
so (\ref{condition:one-dim-Caratheodory}) is fulfilled. Therefore, by Theorem \ref{thm:onedimCara} we have $\cat_\sA = 3$.
\end{exa}

A nice application of the preceding example is the following corollary.

\begin{cor}
Let $p(x) = a + b x^2 + c x^3 + d x^5 + e x^6$ be a non-negative polynomial which is not the zero polynomial. Then $p$ has at most $2$ distinct  real zeros.
\end{cor}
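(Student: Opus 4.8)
The plan is to argue by contradiction: assume the non-negative polynomial $p\in\cA=\lin\{1,x^2,x^3,x^5,x^6\}$ has three pairwise distinct real zeros $x_1,x_2,x_3$, and derive a contradiction. First I would note that since $p\geq 0$ on $\rset$ and $p$ is differentiable, each real zero $x_i$ is a global minimizer of $p$ and hence a critical point, so that $p(x_i)=p'(x_i)=0$ for $i=1,2,3$. Writing $p=\sum_\alpha \lambda_\alpha f_\alpha$ with coefficient vector $\lambda\in\rset^5\setminus\{0\}$ (where $f_\alpha$ runs over $\sA$), these six conditions read $\langle\lambda,s_\sA(x_i)\rangle=0$ and $\langle\lambda,s_\sA'(x_i)\rangle=0$ for $i=1,2,3$. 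Thus the six vectors $s_\sA(x_1),s_\sA'(x_1),s_\sA(x_2),s_\sA'(x_2),s_\sA(x_3),s_\sA'(x_3)$ in $\rset^5$ possess a common nonzero orthogonal vector, so their rank is at most $4$; in particular every $5\times 5$ minor formed from five of these six columns must vanish.

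The key step is to identify the three relevant minors with the polynomial $f$ of Example \ref{exmp:WORKS}. Deleting the column $s_\sA'(x_3)$ leaves exactly the determinant in (\ref{eq:WORKS-determinant}), namely $(x_1-x_2)^4(x_1-x_3)^2(x_2-x_3)^2\,f(x_1,x_2,x_3)=(x_1-x_2)^4(x_1-x_3)^2(x_2-x_3)^2\,f_1(x_1,x_2,x_3)$. Deleting instead $s_\sA'(x_1)$ or $s_\sA'(x_2)$ and reordering the columns yields, by the same formula (\ref{eq:WORKS-determinant}) applied to the corresponding permutation of $(x_1,x_2,x_3)$ together with the symmetry $f(x,y,z)=f(y,x,z)$ (which one reads off directly from the explicit expression for $f$), the analogous products with $f_1$ replaced by $f_2(x_1,x_2,x_3)$ and $f_3(x_1,x_2,x_3)$ respectively, where $f_1,f_2,f_3$ are as in Example \ref{exmp:WORKS}. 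Since the $x_i$ are pairwise distinct, all Vandermonde-type prefactors $(x_i-x_j)$ are nonzero; hence the vanishing of these three minors forces $f_1(X)=f_2(X)=f_3(X)=0$ for $X=(x_1,x_2,x_3)$, i.e.\ $X\in\cZ(f_1)\cap\cZ(f_2)\cap\cZ(f_3)$.

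Finally I would invoke the explicit computation (\ref{eq:zeroSetWORKSexample}) of this common zero set. As $x_1,x_2,x_3$ are distinct, the point $X$ is nonzero, so after scaling it to the unit sphere it must, by (\ref{eq:zeroSetWORKSexample}), be one of $(\pm 1,0,0),(0,\pm 1,0),(0,0,\pm 1)$. Each of these has at least two vanishing coordinates, which would force two of the $x_i$ to coincide, contradicting their distinctness. Therefore $p$ cannot have three distinct real zeros, so it has at most two, as claimed.

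I expect the main obstacle to be purely bookkeeping: carefully matching each $5\times 5$ minor (and the sign and column ordering it produces) to the correct permutation $f_1,f_2,f_3$ of $f$, and verifying the symmetry $f(x,y,z)=f(y,x,z)$. Once this matching is in place, the fact that the common zero set is $\{(\pm 1,0,0),(0,\pm 1,0),(0,0,\pm 1)\}$ — equivalently, condition (\ref{condition:one-dim-Caratheodory}), already established in Example \ref{exmp:WORKS} — immediately supplies the required contradiction.
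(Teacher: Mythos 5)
Your proof is correct, and it reaches the conclusion by a genuinely more elementary route than the paper's. The paper argues through the geometry of the moment cone: it forms the moment sequence $s$ of $\delta_{x_1}+\delta_{x_2}+\delta_{x_3}$, observes that $L_s(p)=0$ with $p\geq 0$, $p\neq 0$ forces $s\in\partial\cS_\sA$, and then gets a contradiction because nonvanishing of the determinant (\ref{eq:WORKS-determinant}) gives $DS_{3,\sA}$ full rank, making $s$ an inner point by Proposition \ref{prop:moreOnMomentCone}(ii). You bypass the moment cone entirely: the first-order condition at an interior global minimum gives $p(x_i)=p'(x_i)=0$, so the coefficient vector $\lambda\neq 0$ of $p$ is orthogonal to all six vectors $s_\sA(x_i),s_\sA'(x_i)$, their rank is at most $4$, and every $5\times 5$ minor vanishes; this is then incompatible with the identity (\ref{eq:WORKS-determinant}) and the zero-set computation (\ref{eq:zeroSetWORKSexample}) when the $x_i$ are pairwise distinct. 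Both proofs share the same computational core, namely (\ref{eq:WORKS-determinant}) together with (\ref{eq:zeroSetWORKSexample}), but yours is self-contained (no convex geometry, no Proposition \ref{prop:moreOnMomentCone}), and it is in fact more careful on one point: the paper asserts that the single determinant (\ref{eq:WORKS-determinant}) is nonzero, yet that determinant carries only $f_1=f$, which is divisible by $x_1x_2$ and therefore can vanish at triples of pairwise distinct points (e.g.\ when $x_1=0$); what (\ref{eq:zeroSetWORKSexample}) actually rules out is the simultaneous vanishing of all three permuted minors. Your bookkeeping with $f_1,f_2,f_3$ (including the check that $f(x,y,z)=f(y,x,z)$, which indeed holds and is forced by the block structure of the determinant) supplies exactly the step the paper's terse proof glosses over, so the rank-deficiency argument closes without appeal to any moment-theoretic machinery.
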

\begin{proof}
Assume to the contrary that $p$ has   three distinct zeros, say $x,y,z$. Let $s$ be the moment sequence of the measure $\mu = \delta_{x} + \delta_{y} + \delta_{z}$. Then $L_s(p)=0$, so $s$ is a boundary point of the moment cone. But from  (\ref{eq:zeroSetWORKSexample}) it follows that the determinant (\ref{eq:WORKS-determinant}) is non-zero, so $s$ is an inner point, a contradiction.
\end{proof}

In the following example the assumption (\ref{condition:one-dim-Caratheodory}) of Theorem \ref{thm:onedimCara} is not satisfied and the assertion (\ref{qbnb}) does not hold.

\begin{exa}\label{exmp:OneDimThmDoesNotWork}
Let $\sA=\{1,x,x^2,x^6\}$ and $\sB=\{y^6,xy^5,x^2y^4,x^6\}$. From Theorem \ref{worstupperbound},  $\cat_{\sB}\leq m-1=3$, while Theorem \ref{densethm} and 
\[\det(s_{\sA}(x),s_{\sA}'(x),s_{\sA}(y),s_{\sA}'(y)) = 2 (y-x)^4 (x+y) (2x^2 + xy + 2y^2)\]
yield $2=\cN_{\sB} \leq \cat_A$, so that $\cat_{\sB} \in\{2,3\}$. We prove that $\cat_{\sB}=3$.  
Let $\nu := \frac{1}{4}(\delta_{-2} + \delta_{-1} + \delta_1 + \delta_2)$. Then
\[s = (s_0, s_1,s_2,s_6)^T  = (s_\sA(-2)+s_\sA(-1)+ s_\sA(1) + s_\sA(2))/4 =
(1, 0, 2.5, 32.5)^T.\]
%
By some straightforward computations it can be shown that $s$ has no $k$-atomic representing measure with $k\leq 2$. Therefore, since $\cat_{\sB} \in\{2,3\}$, we have $\cat_{\sB}=3\neq \left\lceil \frac{3}{2}\right\rceil$. 

Note that $\cN_\sB = 2=\left\lceil \frac{3}{2}\right\rceil$. Thus,  the equality  (\ref{qbnb}) fails.
%
%
%
%
%
%
\end{exa}

\section{Carath\'eorody Numbers: Multidimensional Monomial Case}\label{monomulti}

\begin{dfn}
For $n,d\in\nset$ set
\begin{align}\label{eq:AndDef}
\sA_{n,d} &:= \{x^\alpha : \alpha\in\nset_0^n,\ |\alpha|\leq d\},\\
\sB_{n,d} &:= \{x^\alpha : \alpha\in\nset_0^n,\ |\alpha| = d\}.
\end{align}
\end{dfn}

Note that $|\sA_{n,d}| = \left(\begin{smallmatrix}n+d \\ d\end{smallmatrix}\right)$ and $|\sB_{n,d}| = \left(\begin{smallmatrix}n+d-1 \\ d\end{smallmatrix}\right).$

Throughout this section, we assume the following:
For the polynomials $\cA_{n,d}:=\Lin\, {\sA}_{n,d}$  we consider the truncated moment problem on $\cX=\rset^n$, while for the homogeneous polynomials $\cB_{n,d}:={\Lin}\, \sB_{n,d}$ the moment problem is treated on the real projective space $\cX:=\pset(\rset^{n-1})$. Let $S^{n-1}$ be the unit sphere in $\rset^n, n\geq 2,$ and  $S^{n-1}_+$  the set of points $x\in S^{n-1}$ for which the first non-vanishing coordinate is positive. We consider $S^{n-1}_+$ as a realization of the projective space $\pset(\rset^{n-1})$.

The following simple fact is often used without mention: A polynomial of $\cB_{n,2d}$ is non-negative on $S^{n-1}_+$,  equivalently on $\pset(\rset^{n-1})$, if and only if it is on $\rset^{n-1}$.

The following example shows how differential geometric methods can be used for the truncated moment problem.

\begin{exa}\label{exmp:simpleexampleforNAanduniqueness}
Let $n=d=k=2$, $x^\alpha = (x^{(1)})^{\alpha_1} (x^{(2)})^{\alpha_2}$ and
\begin{align*}
\sA_{2,2} &= \{x^\alpha : \alpha\in\nset_0^2,\ |\alpha|\leq 2\}\\
&= \{x^\alpha : \alpha = (0,0),(0,1),(0,1),(2,0),(1,1),(0,2)\}.
\end{align*}
Then
\[DS_{2,\sA}(C,X) = \begin{pmatrix}
1  & 0 & 0 & 1 & 0 & 0\\
x_1^{(1)} & c_1 & 0 & x_2^{(1)} & c_2 & 0\\
x_1^{(2)} & 0 & c_1 & x_2^{(2)} & 0 & c_2\\
(x_1^{(1)})^2 & 2 c_1 x_1^{(1)} & 0 & (x_2^{(1)})^2 & 2 c_2 x_2^{(1)} & 0\\
x_1^{(1)} x_1^{(2)} & c_1 x_1^{(2)} & c_1 x_1^{(1)} & x_2^{(1)} x_2^{(2)} & c_2 x_2^{(2)} & c_2 x_2^{(1)}\\
(x_1^{(2)})^2  & 0 & 2 c_1 x_1^{(2)} &(x_2^{(2)})^2  & 0 & 2 c_2 x_2^{(2)}
\end{pmatrix},\]
where $C=(c_1,c_2)$ and $X=(x_1,x_2)$,  $x_i = (x_i^{(1)},x_i^{(2)})$. From this we find that
\[\ker DS_{2,\sA}(C,X) = \rset\cdot 
v(C,X) \qquad\text{with}\qquad v(C,X):=\begin{pmatrix}
-2\\
c_1^{-1}(x_1^{(1)}-x_2^{(1)})\\
c_1^{-1}(x_1^{(2)}-x_2^{(2)})\\
2\\
c_2^{-1}(x_1^{(1)}-x_2^{(1)})\\
c_2^{-1}(x_1^{(2)}-x_2^{(2)})
\end{pmatrix}.\]
Hence  $\rank DS_{2,\sA_{2,2}} = 5$ at each point $(x_1,x_2),$  $x_1\neq x_2,$ so the local rank theorem of differential geometry applies. Fix $(C,X)$ as above.  The local rank theorem \cite[Proposition 1, p. 309]{hildeb03} implies  that there is a one-parameter family $(C(t),X(t))$ which has the same moments as $(C,X)$ satisfying the differential equations $\dot{\gamma}(t) = v(C(t),X(t))$ with initial condition $(C(0),X(0))=(C,X)$. This system is
\begin{align*}
\dot{c}_1 &= -2 & \dot{c}_2 &= 2\\
c_1\cdot\dot{x}_1^{(1)}&=x_1^{(1)}-x_2^{(1)} & c_2\cdot\dot{x}_2^{(1)}&=x_1^{(1)}-x_2^{(1)}\\
c_1\cdot\dot{x}_1^{(2)}&=x_1^{(2)}-x_2^{(2)} & c_2\cdot\dot{x}_2^{(2)}&=x_1^{(2)}-x_2^{(2)}
\end{align*}
and its  solution is given by
\begin{align*}
c_1(t) &= c_{1,0} - 2t & x_1^{(i)}(t) &= \gamma_{1,i} + \frac{\gamma_{2,i}}{c_{1,0}+c_{2,0}}\cdot \sqrt{\frac{c_{2,0}+2t}{c_{1,0}-2t}}\\
c_2(t) &= c_{2,0} + 2t &
x_2^{(i)}(t) &= \gamma_{2,i} - \frac{\gamma_{2,i}}{c_{1,0}+c_{2,0}}\cdot \sqrt{\frac{c_{1,0}-2t}{c_{2,0}+2t}}
\end{align*}
with $t\in (-\frac{c_{2,0}}{2},\frac{c_{1,0}}{2}).$ Here $C=(c_{1,0},c_{2,0})$ and $X=((\gamma_{1,1},\gamma_{1,2}),(\gamma_{2,1},\gamma_{2,2}))$ are the initial values at $t=0$. It should be noted that the corresponding moment sequence is indeterminate, but it is a boundary point of the moment cone.
%
\end{exa}

Recall that $\cN_\sA \leq \cat_\sA$ by Theorem \ref{densethm}. There are various other lower bounds for  Carath\'eodory numbers in the literature, see e.g.\ \cite[p.\ 366]{davis84}. In the case $\sA_{2,2k-1}$,  M.\ M\"oller \cite{moller76} obtained the lower bound
\begin{align*}
\mathrm{M\ddot{o}}(2,2k-1) &:= \left(\begin{matrix} k+1\\ 2\end{matrix}\right) + \left\lfloor\frac{k}{2}\right\rfloor.
\end{align*} 
The following result improves  M\"oller's  lower bound.

\begin{prop} 
\begin{equation}\label{eq:NAcomparisson1}
 \mathrm{M\ddot{o}}(2,2k-1) \leq \left\lceil \frac{|\sA_{2,2k-1}|}{3} \right\rceil \leq \cN_{\sA_{2,2k-1}} \leq \cat_{\sA_{2,2k-1}} \quad \text{for}~~k\in \nset.
\end{equation}
For $k\geq 4$ we have
\begin{equation}\label{eq:NAcomparisson2}
\left\lceil \frac{|\sA_{2,2k-1}|}{3} \right\rceil - \mathrm{M\ddot{o}}(2,2k-1) \geq \frac{(k-2)^2-4}{6}.
\end{equation}
\end{prop}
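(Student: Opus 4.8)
The plan is to split (\ref{eq:NAcomparisson1}) into its three inequalities and to observe that (\ref{eq:NAcomparisson2}) is merely a quantitative form of the leftmost one. The two rightmost inequalities come for free from results already available: Proposition \ref{NAlowerbound} with $n=2$ gives $\lceil |\sA_{2,2k-1}|/3\rceil \leq \cN_{\sA_{2,2k-1}}$, and since the monomials lie in $C^r$ for every $r$ the hypothesis $r>\cN_\sA\cdot(n+1)-m$ of Theorem \ref{densethm} is satisfiable, so that $\cN_{\sA_{2,2k-1}} \leq \cat_{\sA_{2,2k-1}}$. Hence only the leftmost inequality and the sharpening (\ref{eq:NAcomparisson2}) remain, and both are purely arithmetic.

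First I would record the two explicit quantities
\[
|\sA_{2,2k-1}| = \binom{2k+1}{2k-1} = \binom{2k+1}{2} = 2k^2+k, \qquad \mathrm{M\ddot{o}}(2,2k-1) = \frac{k^2+k}{2} + \left\lfloor \frac{k}{2}\right\rfloor .
\]
Then I would bound the gap from below by dropping the ceiling and the floor in the direction that helps:
\begin{align*}
\left\lceil \frac{|\sA_{2,2k-1}|}{3}\right\rceil - \mathrm{M\ddot{o}}(2,2k-1)
&\geq \frac{2k^2+k}{3} - \frac{k^2+k}{2} - \frac{k}{2} \\
&= \frac{2(2k^2+k) - 3(k^2+2k)}{6} = \frac{k^2-4k}{6} = \frac{(k-2)^2-4}{6},
\end{align*}
where I used $\lceil x\rceil \geq x$ and $\lfloor k/2\rfloor \leq k/2$. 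This is precisely (\ref{eq:NAcomparisson2}); the computation is in fact valid for every $k$, and the restriction $k\geq 4$ only serves to make the right-hand side nonnegative.

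The leftmost inequality of (\ref{eq:NAcomparisson1}) then follows for $k\geq 4$, since for those $k$ the right-hand side above is $\geq 0$. For the three exceptional values $k\in\{1,2,3\}$ I would simply substitute: one checks that the two sides coincide (they equal $1$, $4$, and $7$ respectively), which incidentally shows the inequality is tight there. There is no genuine obstacle in this argument; the only points demanding care are the correct handling of the floor and the ceiling and the separate treatment of the small cases, where the loose estimate $\lfloor k/2\rfloor \leq k/2$ alone does not yield nonnegativity of the gap and the rounding-up of the ceiling must be used.
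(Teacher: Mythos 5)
Your proposal is correct and takes essentially the same approach as the paper: the two right-hand inequalities are quoted from Proposition \ref{NAlowerbound} (with $n=2$) and Theorem \ref{densethm}, and the gap estimate is obtained by the identical computation---dropping the ceiling via $\lceil x\rceil\geq x$, bounding $\lfloor k/2\rfloor\leq k/2$, and simplifying to $\tfrac{(k-2)^2-4}{6}$. The cases $k=1,2,3$ are settled by direct substitution exactly as the paper indicates (it omits the details; your values $1$, $4$, $7$ are the correct ones).
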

\begin{proof}
The second inequality  of (\ref{eq:NAcomparisson1})   has been stated in Proposition \ref{NAlowerbound}. It reamins to  prove the first inequality of (\ref{eq:NAcomparisson1}). In the cases $k=1,2,3$ it is verified by  direct computations; we omit the details. For $k\geq 4$ it follows from the following computation:
\begin{align*}
\left\lceil \frac{1}{3}  \begin{pmatrix}2k+1\\ 2\end{pmatrix} \right\rceil  &- \left(\begin{pmatrix} k+1\\ 2\end{pmatrix} + \left\lfloor\frac{2}{k}\right\rfloor\right)
\geq \frac{1}{3} \begin{pmatrix}2k+1\\ 2\end{pmatrix} - \left(\begin{pmatrix} k+1\\ 2\end{pmatrix} + \frac{k}{2}\right)\\
&= \frac{(2k+1)k}{3} - \frac{(k+1)k}{2} - \frac{k}{2}
= \frac{(k-2)^2 - 4}{6}.\qedhere
\end{align*}
\end{proof}

Before we turn to our next result we restate the Alexander--Hirschowitz Theorem \cite{alexa95}. We denote by  $V_{n,d,r}$  the vector space of polynomials in $n$ variables of degree at most $d$  having singularities at $r$ general points in $\rset^n$.

\begin{prop}
The subspace $V_{n,d,r}$ has the expected codimension
\[\min \left( r(n+1), \begin{pmatrix} n+d\\ d\end{pmatrix} \right)\]
except for the following cases:
\begin{itemize}
\item[\it (i)] $d=2$; $2\leq r\leq n$, $\codim V_{n,2,r} = r(n+1)- r(r-1)/2$;
\item[\it (ii)] $d=3$; $n=4$, $r=7$, $\dim V_{4,3,7}=1$;
\item[\it (iii)] $d=4$; $(n,r)=(2,5),(3,9),(4,14)$, $\dim V_{n,4,r}=1$.
\end{itemize}
\end{prop}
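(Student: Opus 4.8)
This statement is the Alexander--Hirschowitz theorem, a deep result of projective algebraic geometry whose complete proof occupies the cited reference \cite{alexa95}; I would not attempt to reproduce it, but I can outline the strategy and point to where the real difficulty sits. The plan begins with the trivial bound. Requiring a singular (double) point at a fixed $x$ means imposing $f(x)=0$ together with the $n$ conditions $\partial_i f(x)=0$, i.e.\ $n+1$ linear conditions on the $\binom{n+d}{d}$-dimensional space of polynomials of degree $\le d$. Hence $r$ such points impose at most $r(n+1)$ conditions, and at most $\binom{n+d}{d}$ in total, so $\codim V_{n,d,r}\le \min\bigl(r(n+1),\binom{n+d}{d}\bigr)$ holds automatically. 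All the content lies in the reverse inequality for $r$ \emph{general} points, i.e.\ in showing that the double-point conditions are as independent as possible outside the listed exceptions.

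The second step is to recast the problem via secant varieties. Let $\nu_d\colon\pset^n\to\pset^N$, $N=\binom{n+d}{d}-1$, be the $d$-uple Veronese embedding, under which $\cA_{n,d}$ is identified with the linear system cut out on $\nu_d(\pset^n)$. By Terracini's lemma, the tangent space to the $r$-th secant variety $\sigma_r(\nu_d(\pset^n))$ at a general point is the linear span of the affine tangent spaces to the Veronese at $r$ general points, and this span is precisely the annihilator of $V_{n,d,r}$. Thus the codimension defect of $V_{n,d,r}$ equals the dimension defect of $\sigma_r$, and the theorem becomes the classification of defective Veronese secant varieties; this is also the bridge to the real Waring-rank material of Section~\ref{realwaringrank}. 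Since $\codim V_{n,d,r}$ is the rank of the condition matrix, it is lower semicontinuous in the configuration, so its generic value is the maximal one; it therefore suffices, for each admissible triple $(n,d,r)$ not in (i)--(iii), to exhibit a single special configuration of $r$ points attaining the expected (maximal) codimension.

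That construction is carried out by a double induction on $n$ and $d$ through the Horace method: one specialises some of the points onto a hyperplane $H\cong\pset^{n-1}$ and uses the Castelnuovo restriction--residuation exact sequence
\[
0\longrightarrow \mathcal{I}_{Z'}(d-1)\longrightarrow \mathcal{I}_Z(d)\longrightarrow \mathcal{I}_{Z\cap H,\,H}(d)\longrightarrow 0,
\]
which reduces non-speciality in degree $d$ on $\pset^n$ to non-speciality in degree $d$ on $\pset^{n-1}$ (the trace) together with degree $d-1$ on $\pset^n$ (the residue). The crux, and the genuine innovation of Alexander and Hirschowitz, is that a naive specialisation of a $2$-fat point onto $H$ yields an unmanageable non-reduced trace; one must instead use the \emph{differential Horace method}, splitting each degenerating double point into a double point inside $H$ and a simple point transverse to $H$ in a carefully chosen limiting sense, so that both the trace and residual systems stay within the inductive hypothesis.

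The hard part is precisely the bookkeeping of this induction: choosing how many points to place on $H$ so that the numerics close, verifying the base cases, and then isolating and treating by hand the genuinely exceptional families. These are exactly the places where the induction breaks: the quadrics of (i), where $\sigma_r$ of the quadratic Veronese corresponds to symmetric matrices of rank $\le r$ and is defective by $r(r-1)/2$ for $2\le r\le n$, and the four sporadic triples of (ii)--(iii). Confirming that the defect formula is correct in those cases \emph{and} that no further exceptions occur is the true depth of the theorem, and the reason it is invoked here rather than proved.
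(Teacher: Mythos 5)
Your proposal is correct and takes essentially the same route as the paper: the paper does not prove this proposition at all, but simply restates the Alexander--Hirschowitz theorem and cites \cite{alexa95}, exactly as you do. Your supplementary outline (the trivial upper bound $\codim V_{n,d,r}\le\min\bigl(r(n+1),\binom{n+d}{d}\bigr)$, the reduction to secant-variety defectivity via Terracini's lemma, semicontinuity of the rank, the differential Horace induction, and the symmetric-matrix rank computation giving the defect $r(r-1)/2$ in case (i)) is accurate, but it is extra exposition rather than a divergence in method.
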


\begin{thm}\label{numbernaexplicit}
We have
\begin{equation}\label{eq:NAforAllHomPoly}
\cN_{\sA_{n,d}} = \left\lceil \frac{1}{n+1}\begin{pmatrix}n+d\\ n\end{pmatrix} \right\rceil,
\end{equation}
except for the following cases
\begin{itemize}
\item[\em (i)] $d=2$: $\cN_{\sA_{n,2}} = n+1$.
\item[\em (ii)] $n=4$, $d=3$: $\cN_{\sA_{4,3}} = 8$.
\item[\em (iii)] $n=2$, $d=4$: $\cN_{\sA_{2,4}} = 6$.
\item[\em (iv)] $n=3$, $d=4$: $\cN_{\sA_{3,4}} = 10$.
\item[\em (v)] $n=d=4$: $\cN_{\sA_{4,4}} = 15$.
\end{itemize}
\end{thm}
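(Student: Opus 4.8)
The plan is to recast the rank condition defining $\cN_{\sA_{n,d}}$ as a statement about polynomials with prescribed singularities and then invoke the Alexander--Hirschowitz Theorem quoted above. Fix $k$ and a point $(C,X)$ with all $c_i\neq 0$. Scaling the columns of $DS_{k,\sA_{n,d}}(C,X)$ does not change its rank, so we may discard the factors $c_i$; by (\ref{eq:totalderivative}) the rank then equals the dimension of the span of the vectors $s_{\sA_{n,d}}(x_i)$ and $\partial_j s_{\sA_{n,d}}(x_i)$, $1\le i\le k$, $1\le j\le n$. A coefficient vector $a=(a_\alpha)_{|\alpha|\le d}$ lies in the orthogonal complement of this span exactly when the polynomial $p_a=\sum_\alpha a_\alpha x^\alpha\in\cA_{n,d}$ satisfies $p_a(x_i)=0$ and $\nabla p_a(x_i)=0$ for every $i$, i.e.\ when $p_a$ has a singularity at each $x_i$. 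Hence the corank of $DS_{k,\sA_{n,d}}(C,X)$ equals the dimension of the space of degree-$\le d$ polynomials singular at $x_1,\dots,x_k$, and the maximal rank of $DS_{k,\sA_{n,d}}$ over all $(C,X)$ equals $\binom{n+d}{n}-\dim V_{n,d,k}$, where the points are taken in general position. Consequently
\[
\cN_{\sA_{n,d}}=\min\{k\in\nset : \dim V_{n,d,k}=0\}=\min\Big\{k\in\nset : \codim V_{n,d,k}=\textstyle\binom{n+d}{n}\Big\}.
\]

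First I would treat the generic case. By the Alexander--Hirschowitz Theorem, outside the listed exceptions one has $\codim V_{n,d,k}=\min\!\big(k(n+1),\binom{n+d}{n}\big)$, so $\dim V_{n,d,k}=0$ precisely when $k(n+1)\ge\binom{n+d}{n}$. The smallest such $k$ is $\big\lceil\frac{1}{n+1}\binom{n+d}{n}\big\rceil$, which yields (\ref{eq:NAforAllHomPoly}). One must also check that at $k-1$ the dimension is still positive: since $(k-1)(n+1)<\binom{n+d}{n}$ and the triple $(n,d,k-1)$ is non-exceptional, the full codimension has not yet been reached, so $\dim V_{n,d,k-1}>0$.

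Next I would dispose of the five exceptional families by reading the defect off the Alexander--Hirschowitz list. For $d=2$ and $2\le r\le n$ the formula $\codim V_{n,2,r}=r(n+1)-\binom{r}{2}$ gives $\dim V_{n,2,n}=\binom{n+2}{2}-\frac{n(n+3)}{2}=1$, so $n$ general double points do not suffice; at $r=n+1$, which is no longer in the exceptional range and satisfies $(n+1)^2\ge\binom{n+2}{2}$, the full codimension is attained, whence $\cN_{\sA_{n,2}}=n+1$. In each of the cases (ii)--(v) the value of $k$ predicted by the ceiling formula is exactly the exceptional $r$ at which $\dim V_{n,d,r}=1$, namely $(n,d,r)=(4,3,7),(2,4,5),(3,4,9),(4,4,14)$; one general double point too few remains. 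Passing to $r+1$, which is non-exceptional and satisfies $(r+1)(n+1)>\binom{n+d}{n}$, forces $\dim V_{n,d,r+1}=0$, giving $\cN_{\sA_{n,d}}=r+1$, i.e.\ $8,6,10,15$ respectively.

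The substantive input is the Alexander--Hirschowitz Theorem itself, which is assumed; the only genuine step requiring care is the identification of the corank of $DS_{k,\sA_{n,d}}$ with $\dim V_{n,d,k}$, together with the observation that full rank is attained for \emph{some} $(C,X)$ if and only if it is attained for \emph{generic} $(C,X)$, so that the generic dimension computed by Alexander--Hirschowitz is indeed the relevant one (the function $X\mapsto\dim V_{n,d,k}(X)$ is upper semicontinuous, with generic value equal to its minimum). Everything else reduces to elementary bookkeeping with binomial coefficients.
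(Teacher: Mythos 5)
Your proposal is correct and follows essentially the same route as the paper: identify the corank of $DS_{k,\sA_{n,d}}$ with $\dim V_{n,d,k}$, invoke the Alexander--Hirschowitz Theorem for the generic formula, and settle the five exceptional families by reading the defect (dimension $1$) off the exceptional list and passing to one more point. Your write-up is in fact more careful than the paper's, since you make explicit both the orthogonal-complement identification of singular polynomials with the kernel of $DS_{k,\sA_{n,d}}^T$ and the semicontinuity argument showing that full rank at some $(C,X)$ is equivalent to full rank at generic $(C,X)$, both of which the paper leaves implicit.
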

\begin{proof}
From the corresponding definitions of  $V_{n,d,r}$ and $ DS_{k,\sA_{n,d}}$ we obtain
\[\codim V_{n,d,r} = |A_{n,d}| - \dim V_{n,d,r} = \rank DS_{k,\sA_{n,d}}((1,...,1),X).\]
Therefore, apart from exceptional cases, (\ref{eq:NAforAllHomPoly})  follows at once from the Alexander--Hirschowitz Theorem. Next we treat the exceptions.

(i): Note that $\cN_{\sA_{n,2}}\geq \lceil n/2\rceil +1$. Since for all $k$ satisfying  $\lceil n/2\rceil +1 \leq k \leq n$ the matrix $DS_{k,\sA_{n,2}}((1,...,1),X)$ has not the expected full rank for any $X$, the first $k$ with full rank is $k=n+1$.

(ii): Since $\cN_{\sA_{4,3}}\geq 7$ and $DS_{7,\sA_{4,3}}((1,...,1),X)$ has not the expected full rank for any $X$, $\cN_{\sA_{4,3}}= 8$.

(iii): We have $\cN_{\sA_{2,4}}\geq 5$ and $DS_{5,\sA_{2,4}}((1,...,1),X)$ has not the expected full rank for any $X$. Hence  $\cN_{\sA_{2,4}}= 6$.

(iv): Then $\cN_{\sA_{3,4}}\geq 9$ and $DS_{9,\sA_{3,4}}((1,...,1),X)$ has not the expected full rank for any $X$. Thus $\cN_{\sA_{3,4}}= 10$.

(v): Then $\cN_{\sA_{4,4}}\geq 14$ and  $DS_{14,\sA_{4,4}}((1,...,1),X)$ has not the expected full rank for any $X$. Therefore, $\cN_{\sA_{4,4}}= 15$.
\end{proof}

For the homogeneous case we have

\begin{cor}
$\cN_{\sB_{n+1,d}} = \cN_{\sA_{n,d}}$.
\end{cor}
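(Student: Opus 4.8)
The plan is to reduce everything to the classical homogenization bijection between $\sA_{n,d}$ and $\sB_{n+1,d}$ and to observe that, read in a suitable affine chart of $\pset(\rset^n)$, the two total derivative matrices of Definition~\ref{NAdefinition} literally coincide. Denote the $n+1$ homogeneous variables by $(x^{(1)},\dots,x^{(n)},y)$. The assignment $x^\alpha\mapsto x^\alpha y^{d-|\alpha|}$, with $\alpha\in\nset_0^n$ and $|\alpha|\le d$, is a bijection of $\sA_{n,d}$ onto $\sB_{n+1,d}$, so in particular $|\sA_{n,d}|=|\sB_{n+1,d}|=:m$. First I would fix the affine chart $U:=\{[x:y]\in\pset(\rset^n):y\neq 0\}$ and the local coordinates $\xi=(\xi^{(1)},\dots,\xi^{(n)})$, $\xi^{(j)}:=x^{(j)}/y$, so that $\xi\mapsto[\xi:1]$ parametrizes $U$ by $\rset^n$.

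The central observation is that on this chart the two moment curves agree. Evaluating the homogenized basis at $[\xi:1]$ gives $(\xi^\alpha\cdot 1^{d-|\alpha|})_{|\alpha|\le d}=(\xi^\alpha)_{|\alpha|\le d}$, hence $s_{\sB_{n+1,d}}([\xi:1])=s_{\sA_{n,d}}(\xi)$, and differentiating in the chart coordinates yields $\partial_{\xi^{(j)}}s_{\sB_{n+1,d}}([\xi:1])=\partial_{\xi^{(j)}}s_{\sA_{n,d}}(\xi)$ for $j=1,\dots,n$. Since $\dim\pset(\rset^n)=n$, each atom contributes $n+1$ columns in both cases, so for any $k$-atomic data $(C,X)$ whose atoms all lie in $U$ the matrix $DS_{k,\sB_{n+1,d}}(C,X)$, computed in the chart coordinates, is column-for-column equal to $DS_{k,\sA_{n,d}}(C,\xi(X))$, where $\xi(X)\in(\rset^n)^k$ is the corresponding affine configuration. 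In particular any full-rank configuration for $\sA_{n,d}$ transports to a full-rank configuration for $\sB_{n+1,d}$, giving $\cN_{\sB_{n+1,d}}\le\cN_{\sA_{n,d}}$ at once.

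For the reverse inequality I would compare generic ranks. For each fixed $k$ the maximum of $\rank DS_{k,\sB_{n+1,d}}(C,X)$ over all $(C,X)\in\rset_{>0}^k\times\pset(\rset^n)^k$ equals its maximum over those data whose atoms all lie in $U$: the rank is lower semicontinuous, so its maximum is attained on an open subset of the parameter space, and because $U^k$ is dense (its complement is contained in the union of the hyperplanes $\{y_i=0\}$, which is of lower dimension) this open subset must meet $U^k$. By the chart identity of the previous paragraph this maximal rank equals the generic rank of $DS_{k,\sA_{n,d}}$. Since $\cN_\sA$ is by definition the least $k$ at which the generic rank first reaches $m$, the two numbers $\cN_{\sB_{n+1,d}}$ and $\cN_{\sA_{n,d}}$ coincide.

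The only genuinely delicate point is this last density--semicontinuity step, which guarantees that a full-rank projective configuration may be chosen with no atom on the hyperplane at infinity $\{y=0\}$; I expect it to be the main (though mild) obstacle. It rests solely on the lower semicontinuity of the rank and on $U$ being dense in $\pset(\rset^n)$, and once it is in place the column-for-column identification of the two total derivatives finishes the proof.
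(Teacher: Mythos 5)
Your proof is correct and follows essentially the same route as the paper's: one inequality by lifting a full-rank affine configuration into the chart $\{y=1\}$, the other by a continuity/density argument that moves a full-rank projective configuration off the hyperplane at infinity. The only real difference is bookkeeping: the paper computes $DS_{k,\sB_{n+1,d}}$ in homogeneous coordinates (so each atom contributes $n+2$ columns) and then invokes the Euler relation to discard the column $\partial_{n+1}s_{\sB_{n+1,d}}(Y_i)$ as a linear combination of $s_{\sB_{n+1,d}}(Y_i)$ and $\partial_{j}s_{\sB_{n+1,d}}(Y_i)$, $j=1,\dots,n$, whereas you work with the intrinsic chart derivative from the start, so the two matrices coincide column-for-column and no column needs to be removed.
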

\begin{proof}
Let $X=(X_1,...,X_k)\in\rset^{nk}$, $k=\cN_{\sA_{n,d}}$, be such that $DS_{k,\sA_{n,d}}(1,X)$ has full rank. Then $DS_{k,\sB_{n+1,d}}(1,Y)$ with $Y=((X_1,1),...,(X_k,1))$ has full rank, so that $\cN_{\sA_{n,d}} = k \geq \cN_{\sB_{n+1,d}}$.

On the other hand, let $Y=(Y_1,...,Y_k)\in\rset^{(n+1)k}$, $k=\cN_{\sB_{n+1,d}}$, be such that $DS_{k,\sB_{n+1,d}}(1,Y)$ has full rank. We can assume that all $(n+1)$-th coordinates of $Y_i$ are non-zero by the continuity of the determinant and therefore they can be chosen to be 1, since we are in $\pset(\rset^{n})$. 
The column $\partial_{n+1}s_{\sB_{n+1,d}}(Y_i)$  depends linearly on $s_{\sB_{n+1,d}}(Y_i)$ and $\partial_{j}s_{\sB_{n+1,d}}(Y_i)$, $j=1,...,n$. Therefore, omitting this column does not change the rank.  Hence  $DS_{k,\sA_{n,d}}(1,X)$ with $Y_i=(X_i,1)$ has full rank, that is,  $\cN_{\sA_{n,d}} \leq k=\cN_{\sB_{n+1,d}}$.
\end{proof}


\section{Carath\'eodory Numbers and Zeros of positive Polynomials}\label{carzeropos}

For $f\in {\cB}_{3,2d}$, $\cZ_\pset(f)$ denotes the projective zero set of $f$. Set
\begin{equation}
\alpha(2d) := \frac{3}{2}d(d-1) + 1.
\end{equation}

In this section we use the following proposition of  Choi, Lam, and Reznick \cite{choi80}.

\begin{prop}\label{factoripolpos}
Let $f\in {\cB}_{3,2d}$. Suppose that $f\in {\Pos}(\rset^3)$ and  $|\cZ_{\pset}(f)|>\alpha(2d).$ Then   $|\cZ_{\pset}(f)|$ is infinite and there are polynomials  $p\in {\cB}_{3,2d_1}$, $q\in {\cB}_{3,d_2}$ such that~ $f=pq^2 $,  where $d_1+d_2=d$, $p\in {\Pos}(\rset^3)$, $|\cZ_{\pset}(p)|<\infty $, $q$ is indefinite, and $|\cZ_{\pset}(q)|$ is infinite.  (It is possible that $p$  is a positive real constant; in this case $d_1=0$  and we set ${\cB}_{3,0}:=\rset$.)
\end{prop}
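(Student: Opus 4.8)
The plan is to recover the result from the local geometry of the real zero set of a positive semidefinite (psd) form, following the ideas of Choi--Lam--Reznick.

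First I would record the basic local fact: since $f\in\Pos(\rset^3)$ attains its minimal value $0$ at every point of $\cZ_\pset(f)$, the gradient of $f$ vanishes there, so each real projective zero is an \emph{isolated or singular} point of the complex curve $V_\cset(f)=\{f=0\}\subseteq\pset(\cset^3)$, and the lowest-order term of the Taylor expansion of $f$ at such a point is a psd binary form, hence of even order $\geq 2$. Two consequences follow. First, an $\rset$-irreducible psd form has only finitely many real projective zeros, because these are isolated singular points of an irreducible plane curve, of which there are finitely many. Second, an indefinite ternary form has infinitely many real projective zeros: the open sets $\{f>0\}$ and $\{f<0\}$ on $S^2$ are both nonempty, and their common boundary $\{f=0\}$ is nonempty and at least one-dimensional.

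Next the factorization. Write the real irreducible factorization $f=c\prod_i h_i^{a_i}$ with $c>0$ and the $h_i$ pairwise non-proportional. I claim each indefinite factor $h_i$ occurs to an even power. Indeed, its real zero set $\Gamma_i$ is one-dimensional, while by B\'ezout the remaining factors meet $\Gamma_i$ in finitely many points; hence on a smooth arc of $\Gamma_i$ disjoint from the other factors, $f$ agrees up to a nonvanishing factor with $h_i^{a_i}$, and $h_i$ changes sign across this arc. As $f\geq 0$, the exponent $a_i$ is even. Putting $q:=\prod_{h_i\ \mathrm{indefinite}} h_i^{a_i/2}$ and $p:=c\prod_{h_i\ \mathrm{psd}} h_i^{a_i}$ gives $f=pq^2$. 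Here $p$ is a product of psd forms times a positive constant, hence psd, and its zero set is a finite union of finite sets, so $|\cZ_\pset(p)|<\infty$ (with $p$ a positive constant and $d_1=0$ when no psd factor occurs). Comparing degrees, $2\deg q+\deg p=2d$, that is $d_1+d_2=d$ with $\deg p=2d_1$ and $\deg q=d_2$. In the case $|\cZ_\pset(f)|=\infty$ there must be at least one indefinite factor, since the psd factors alone contribute only finitely many zeros; thus $q$ is nonconstant, carries the sign-changing factors, and is therefore indefinite with $|\cZ_\pset(q)|$ infinite.

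It remains to prove the dichotomy that is the first conclusion, namely that \emph{finitely many zeros forces at most $\alpha(2d)$ of them}; its contrapositive is exactly ``$|\cZ_\pset(f)|>\alpha(2d)$ implies $|\cZ_\pset(f)|$ infinite''. When $\cZ_\pset(f)$ is finite, the second consequence above shows $f$ has no indefinite factor, so $f=p$ and every zero is an isolated, even-multiplicity singular point of $V_\cset(f)$. I would bound their number by a genus/B\'ezout count: the arithmetic genus of a plane curve of degree $2d$ is $\binom{2d-1}{2}$, and a point of multiplicity $m$ absorbs at least $\binom{m}{2}$ of it. The decisive refinement is that a psd zero has \emph{even} multiplicity $m\geq 2$, so its delta-invariant is large relative to the single real point it contributes; tracking this, together with the way reducibility distributes the genus among components, is what sharpens the crude estimate $(2d-1)^2$ down to the exact value $\alpha(2d)=\tfrac32 d(d-1)+1$. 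This quantitative step is the genuine obstacle — the structural factorization above is comparatively routine once ``psd zero $=$ singular point'' is in hand — and its sharpness is witnessed by the Robinson form ($d=3$), which realizes the extremal count $10=\alpha(6)$.
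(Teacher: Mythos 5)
The paper contains no proof of this proposition at all: it is imported verbatim from Choi--Lam--Reznick \cite{choi80}, so your reconstruction has to be measured against their argument. Your structural half is correct and is indeed the routine part: psd zeros are singular points of the complex curve, an $\rset$-irreducible psd factor has only finitely many projective zeros, an indefinite ternary form has infinitely many, and indefinite irreducible factors of a psd form occur to even powers, giving $f=pq^2$. The genuine gap is exactly where you locate it, but your sketch for closing it cannot succeed. The whole force of the hypothesis $|\cZ_\pset(f)|>\alpha(2d)$ lies in the bound ``finitely many zeros $\Rightarrow$ at most $\alpha(2d)$ zeros'', and genus/B\'ezout bookkeeping of the kind you describe does not produce the constant $\tfrac{3}{2}d(d-1)+1$. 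The generic zero of a psd form is an acnode: multiplicity exactly $2$, two complex-conjugate branches, delta-invariant exactly $1$; the evenness of the multiplicity --- your ``decisive refinement'' --- adds nothing beyond $\delta_P\geq 1$ at such a point. The genus formula therefore bounds the number of real zeros of an irreducible degree-$2d$ curve only by $\binom{2d-1}{2}=2d^2-3d+1$, which strictly exceeds $\alpha(2d)=\tfrac{3}{2}d^2-\tfrac{3}{2}d+1$ for every $d\geq 4$, and reducible $f$ only makes the count larger (splittings add B\'ezout intersection points beyond what the component genera absorb). What Choi, Lam, and Reznick actually use is a global topological tool of a different nature: Petrovskii's inequality, which bounds the number of even minus odd ovals of a smooth real projective plane curve of degree $2d$ by $\tfrac{3}{2}d(d-1)+1$; it is applied to a small perturbation $f-\varepsilon e$ with $e$ positive definite, each isolated zero of $f$ spawning a small oval of the perturbed curve. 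No local singularity count reproduces this constant, so the step you defer as ``the genuine obstacle'' is not merely unproven in your write-up --- the route you propose for it is quantitatively inadequate.

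A secondary flaw: the final inference ``$q$ carries the sign-changing factors, and is therefore indefinite'' is a non sequitur. If every indefinite irreducible factor $h_j$ occurs in $f$ to a power $a_j$ divisible by $4$, then $q=\prod_j h_j^{a_j/2}$ is a perfect square, hence psd, and unique factorization leaves no other admissible choice of $q$. The simplest instance is $f=x^4$ (so $d=2$, $|\cZ_\pset(f)|=\infty>\alpha(4)=4$): the only factorization with $p$ psd and $|\cZ_\pset(p)|<\infty$ is $p$ constant and $q=cx^2$, which is not indefinite. This edge case is really a defect of the statement as transcribed in the paper rather than of your strategy; what your argument legitimately yields --- and what the paper actually uses later, e.g.\ in the proof of Theorem \ref{catzeroschoi} --- is that $q$ is a nonconstant product of indefinite irreducible forms, whence $|\cZ_\pset(q)|$ is infinite, but $q$ itself need not be indefinite.
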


The main aim of this section is to derive  {\it upper bounds} for  the Carath\'eodory number  $\cat_{\sB_{n,2d}}$,  $n=3$. The first approach (Theorem \ref{thm:caraBounds}) applies also to cases with $n>3$ (see Theorem \ref{thm:cara44}). The  second approach  (Theorem \ref{catzeroschoi}) is based on Bezout's Theorem and gives better bounds.

For $d\in \nset$ let $\beta(2d)$ denote the maximum of $|\cZ_\pset(f)|$, where $f\in {\cB}_{3,2d}$,  $f\in {\Pos}(\rset^2)$ and $\cZ_\pset(f)$ is finite. By the Choi--Lam--Reznick Theorem (Proposition \ref{factoripolpos}), $\beta(d)\leq \alpha(d)$ for  $d\in \nset$. We abbreviate $\cat_{2d} := \cat_{\sB_{3,2d}}$.

\begin{thm}
\begin{align}\label{theorem47}
\cat_{2d} \leq\max_{k=0,...,d} \left\{ \begin{pmatrix} 2d+2\\ 2\end{pmatrix} - \begin{pmatrix} 2d+2-k\\ 2\end{pmatrix} + \beta(2(d-k))\right\} + 1.
\end{align}
\end{thm}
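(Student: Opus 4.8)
The plan is to combine the boundary reduction of Proposition \ref{propstoboundary} with the Choi--Lam--Reznick factorization (Proposition \ref{factoripolpos}), re-atomizing the part of a representing measure that sits on a curve. First I would record that $\cX=\pset(\rset^2)$ is compact and that condition (\ref{cond+}) holds with $e:=x^{2d}+y^{2d}+z^{2d}\in\cB_{3,2d}$. Hence Proposition \ref{propstoboundary} applies and gives
\[\cat_{2d}\leq\max\{\cat_{\sB_{3,2d}}(s):s\in\partial\cS_{\sB_{3,2d}}\cap\cS_{\sB_{3,2d}}\}+1,\]
so it suffices to bound $\cat_{\sB_{3,2d}}(s)$ for a boundary sequence $s$ by the maximum in (\ref{theorem47}). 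By Proposition \ref{prop:zerosSupport} there is a nonzero $f\in\cB_{3,2d}$ with $f\geq 0$ on $\cX$, $L_s(f)=0$, and with every representing measure of $s$ supported on $\cZ_\pset(f)$.

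If $\cZ_\pset(f)$ is finite, then by the definition of $\beta$ we have $|\cZ_\pset(f)|\leq\beta(2d)$, so any representing measure of $s$, being supported on $\cZ_\pset(f)$, has at most $\beta(2d)$ atoms; this is exactly the $k=0$ term of the maximum.

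If $\cZ_\pset(f)$ is infinite, then $|\cZ_\pset(f)|>\alpha(2d)$ and Proposition \ref{factoripolpos} yields $f=pq^2$ with $p\in\cB_{3,2d_1}$, $q\in\cB_{3,d_2}$, $d_1+d_2=d$, where $p\geq 0$ has finite $\cZ_\pset(p)$ and $\cZ_\pset(q)$ is infinite. Let $\mu$ be a representing measure of $s$; since $\supp\mu\subseteq\cZ_\pset(f)=\cZ_\pset(p)\cup\cZ_\pset(q)$, I would split $\mu=\mu_c+\mu_p$ with $\mu_c:=\mu|_{\cZ_\pset(q)}$ and $\mu_p:=\mu|_{\cZ_\pset(p)\setminus\cZ_\pset(q)}$. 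The part $\mu_p$ lives on the finite set $\cZ_\pset(p)$, hence has at most $|\cZ_\pset(p)|\leq\beta(2d_1)=\beta(2(d-d_2))$ atoms. For $\mu_c$ I would pass to restrictions: the restriction map $R:\cB_{3,2d}\to C(\cZ_\pset(q))$ has kernel containing $q\cdot\cB_{3,2d-d_2}$, whence
\[\dim R(\cB_{3,2d})\leq\binom{2d+2}{2}-\dim\bigl(q\cdot\cB_{3,2d-d_2}\bigr)=\binom{2d+2}{2}-\binom{2d+2-d_2}{2}=:r,\]
the last step because multiplication by $q\neq0$ is injective. As $\cZ_\pset(q)$ is a compact subset of $\cX$ and, for measures carried by $\cZ_\pset(q)$, one has $\int f_j\,d\nu=\int R(f_j)\,d\nu$, the Richter--Tchakaloff Theorem (Proposition \ref{richtercor}) applied to the $r$-dimensional space $R(\cB_{3,2d})$ on $\cZ_\pset(q)$ furnishes an atomic measure $\mu_c'$ on $\cZ_\pset(q)$ with at most $r$ atoms and the same $\sB_{3,2d}$-moments as $\mu_c$. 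Then $\mu_c'+\mu_p$ represents $s$ with at most $r+\beta(2(d-d_2))$ atoms, which is the $k=d_2$ term of the maximum. Letting $d_2$ range over $\{1,\dots,d\}$ and adjoining the finite case $k=0$ yields the claimed bound.

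The two dimension counts are routine, so the substantive step is the re-atomization: the curve part $\mu_c$ must be replaced by a measure with at most $r$ atoms \emph{on the same curve} whose $\sB_{3,2d}$-moments are unchanged, which is what transfers the problem on the infinite zero set to a finite-dimensional Richter--Tchakaloff argument. I expect the only delicate conceptual point to be the relation between the real zero set $\cZ_\pset(q)$ and the complex curve $\{q=0\}$; pleasantly, it does not obstruct the argument, since we need only a \emph{lower} bound on $\dim\ker R$, and the inclusion $q\cdot\cB_{3,2d-d_2}\subseteq\ker R$ (every multiple of $q$ vanishes wherever $q$ does, in particular on $\cZ_\pset(q)$) already supplies it.
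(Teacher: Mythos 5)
Your proposal is correct, and it reaches the bound by the same overall strategy as the paper: the boundary reduction via Proposition \ref{propstoboundary} (with $e=x_1^{2d}+x_2^{2d}+x_3^{2d}$ and compactness of $\pset(\rset^2)$), the Choi--Lam--Reznick factorization $f=pq^2$ from Proposition \ref{factoripolpos}, the split of a boundary sequence into a part supported on the finite zero set of $p$ (bounded by $\beta(2(d-d_2))$) and a part supported on the curve $\cZ_\pset(q)$, and the identical dimension count $\dim\bigl(q\cdot\cB_{3,2d-d_2}\bigr)=\binom{2d+2-d_2}{2}$ coming from multiples of the indefinite factor. Where you genuinely differ is the mechanism for counting atoms on the curve. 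The paper works with an atomic representing measure, first reduces it so that the vectors $s_{\sB_{3,2d}}(x_i)$ of its atoms are linearly independent, and then bounds the number of atoms lying on $\cZ_\pset(q)$ by the rank of the matrix whose rows are these moment vectors: the coefficient vectors of the polynomials $x^\alpha q$ are linearly independent elements of its kernel, so that rank is at most $\binom{2d+2}{2}-\binom{2d+2-d_2}{2}$. You instead keep an arbitrary representing measure, restrict it to the curve, and re-atomize that restriction by a second application of Richter--Tchakaloff (Proposition \ref{richtercor}) to the restricted function space $R(\cB_{3,2d})\subseteq C(\cZ_\pset(q))$, whose dimension is bounded by the same codimension via rank--nullity. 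These are dual readings of one linear-algebra fact, but they are packaged differently: your version is more modular, needing no preliminary linear-independence reduction and applying to non-atomic representing measures directly, and it isolates the re-atomization-on-a-subvariety idea in a reusable form; the paper's version is more elementary (no second appeal to Tchakaloff) and produces the quantity $D(k)$ together with the linear independence of the surviving atoms, which is exactly the hypothesis needed elsewhere (Theorem \ref{thm:LinIndepZeros}) for lower bounds. Both arguments then close identically, the curve contributing the $\binom{2d+2}{2}-\binom{2d+2-d_2}{2}$ summand and the finite part the $\beta(2(d-d_2))$ summand of the $k=d_2$ term, with the finite-zero-set case giving the $k=0$ term.
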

\begin{proof}
Let $s\in\cS$. Since the projective space $\pset(\rset^{n-1})$ is compact and condition (\ref{cond+}) holds with $e:= x_1^{2d}+x_2^{2d}+x_3^{2d}$, it follows from Proposition \ref{propstoboundary} that  $\cat_{2d} \leq \max_{s\in\partial\cS} \cat_{2d}(s)+1$. Therefore, it is sufficient to show
\[\cat_{2d} (s) \leq\max_{k=0,...,d} \left\{ \begin{pmatrix} 2d+2\\ 2\end{pmatrix} - \begin{pmatrix} 2d+2-k\\ 2\end{pmatrix} + \beta(2(d-k))\right\} \tag{$*$}\]
for all $s\in\partial\cS$. 

Let $s = \sum_{i=1}^l c_i s_{\sB_{3,2d}}(x_i)$ be an $l$-atomic  representating measure of $s\in\partial\cS.$ Since $s\in\partial\cS,$ there exists a polynomial $p\in \cB_{3,2d}, p\neq 0$, such that $ p(x)\geq 0$ on $\pset(\rset^2)$ and $L_s(p)=0$. Then\ $\supp\mu \subseteq\cZ(p)$, that is, $x_1,\dots,x_l\in \cZ(p)$. 

We can assume without loss of generality that the set $\{s_{\sB_{3,2d}}(x_i)\}_{i=1,...,l}$ is linearly independent. Indeed, assume that these vectors are linearly dependent and let  $\sum_{i=1}^l d_i s_{\sB_{3,2d}}(x_i)=0$ be a non-trivial linear combination. Since all $c_i>0$,  there exists $\varepsilon >0$ such that $c_i + \varepsilon d_i \geq 0$ for all $i$ and $c_j + \varepsilon d_j=0$ for one $j$. Hence  $\mu' = \sum_{i=1}^l (c_i + \varepsilon d_i)\cdot s_{\sB_{3,2d}}(x_i)$ is a  $(l-1)$-atomic representing measure of $s$. 

The polynomial $p\in \cB_{3,2d}$ is non-negative  on $\pset(\rset^2)$, hence on $\rset^3$, so the Choi--Lam--Reznick Theorem (Proposition \ref{factoripolpos}) applies. There are two cases:
\begin{itemize}
\item[a)] $|\cZ(p)|\leq \beta(2d)$.
\item[b)] $p=h^2 q$, where $k:=\deg (h)\geq 1$.
\end{itemize}
In the case a) we have $|\cZ(p)|\leq \beta(2d)$ by the definition of $\beta(2d)$ and therefore $\cat_{2d}(s) \leq \beta(2d)$. This is the case $d=k$ in $(*)$.

Now we turn to case b). Then $k=1,\dots,d-1.$
Let $D(k)$ denote the largest $l$ for which there exist $y_1,\dots,y_l\in \cZ(h)$ such that the vector $s_{\sB_{3,2d}}(y_1),....,s_{\sB_{3,2d}}(y_l)$ are linearly independent. Then, by the  paragraph before last, we have
\begin{align}\label{catdkest}
\cat_{2d}(s) \leq D(k) + \beta(2(d-k)).
\end{align}
Let $y_1,...,y_l\in\cZ(h)$. We   define
\[M(y_1,...,y_l) := \begin{pmatrix}s_{\sB_{3,2d}}(y_1)^T\\ \vdots\\ s_{\sB_{3,2d}}(y_l)^T\end{pmatrix}\]
and $h_\alpha := x^\alpha h$ for  $\alpha\in \nset_0^3, |\alpha|= 2d-k$. Let $\tilde{h}_\alpha$ be the coefficient vector of $h_\alpha$, that is, $h_\alpha(\,\cdot\,) = \langle \tilde{h}_\alpha,s_{\sB_{3,2d}}(\,\cdot\,)\rangle$. Since $s_{\sB_{3,2d}}(y_i)^T\cdot \tilde{h}_\alpha = \langle \tilde{h}_\alpha, s_{\sB_{3,2d}}(y_i)\rangle = y_i^\alpha h(y_i) = 0$, we have  $\tilde{h}_\alpha\in\ker M(y_1,...,y_l)$. Clearly, the vectors  $\tilde{h}_\alpha$ are linearly independent.
Therefore, using (\ref{catdkest}) we derive
\begin{align*}
\cat_{2d}(s)&\leq D(k) + \beta(2(d-k))\\
&\leq \max\rank M(y_1,...,y_l) + \beta(2(d-k))\leq  |\sB_{3,2d}| - |\sB_{3,2d-k}| + \beta(2(d-k))\\
&= \begin{pmatrix} 2d+2\\ 2\end{pmatrix} - \begin{pmatrix} 2d+2-k\\ 2\end{pmatrix} + \beta(2(d-k))
\end{align*}
which is the $k$-th term in ($*$).  

Summarizing, we have $k=d$ in case a) and  $k=1,...,d-1$ in case b). Thus we have proved ($*$) for arbitrary $s\in\partial\cS$ which completes the proof.
\end{proof}

As far as the authors know, the numbers $\beta(2d)$ are not yet known for $d\geq 4$, but we have  $\beta(2d)\leq \alpha(2d)$ by Proposition \ref{factoripolpos}.

\begin{thm}\label{thm:caraBounds} 
For $d\in \nset $ we have
\begin{align}\label{cadalpgada}
\cat_{2d} \leq \alpha(2(d+1)) = \frac{3}{2}d(d+1) + 1.
\end{align}
\end{thm}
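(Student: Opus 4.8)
The plan is to obtain the bound directly from the preceding theorem, inequality~(\ref{theorem47}), by replacing the unknown quantities $\beta$ with the explicit upper bound $\alpha$ and then maximising the resulting expression in $k$. Recall that (\ref{theorem47}) reads
\[\cat_{2d} \leq \max_{k=0,\ldots,d}\left\{\binom{2d+2}{2} - \binom{2d+2-k}{2} + \beta(2(d-k))\right\} + 1.\]
Using the Choi--Lam--Reznick estimate $\beta(2m)\leq\alpha(2m)$ for $m\geq 1$ together with $\beta(0)=0$, it suffices to show that the bracketed term is at most $\tfrac32 d(d+1)=\alpha(2(d+1))-1$ for every $k$; adding the trailing $+1$ then gives $\cat_{2d}\leq\tfrac32 d(d+1)+1=\alpha(2(d+1))$. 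Concretely, for $k=0,\dots,d-1$ I would bound the term by
\[g(k) := \binom{2d+2}{2} - \binom{2d+2-k}{2} + \alpha(2(d-k)),\]
while for $k=d$ the honest value is $\binom{2d+2}{2}-\binom{d+2}{2}$, since here the finite-zero factor is a positive constant and contributes $\beta(0)=0$ rather than $\alpha(0)=1$.

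The main computation is to expand $g(k)$. Substituting $\binom{2d+2}{2}=(d+1)(2d+1)$, $\binom{2d+2-k}{2}=\tfrac12(2d+2-k)(2d+1-k)$ and $\alpha(2(d-k))=\tfrac32(d-k)(d-k-1)+1$ and collecting terms, the $d$-only and constant parts cancel against $\binom{2d+2}{2}$, leaving the clean quadratic
\[g(k) = k^2 - (d-3)k + \tfrac32 d(d-1) + 1.\]
This is an upward parabola in $k$, so on $\{0,\dots,d\}$ its maximum sits at an endpoint. One checks $g(0)=\alpha(2d)=\tfrac32 d(d-1)+1$, $g(d-1)=\tfrac32 d^2+\tfrac12 d-1$, and the genuine $k=d$ contribution $\binom{2d+2}{2}-\binom{d+2}{2}=\tfrac32 d(d+1)$; comparing shows $g(0),g(d-1)<\tfrac32 d(d+1)$, so the endpoint $k=d$ dominates and the maximum equals exactly $\tfrac32 d(d+1)$.

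The only real obstacle is the bookkeeping at the endpoint: had one carelessly set $\alpha(0)=1$, the value $g(d)$ would come out to $\tfrac32 d(d+1)+1=\alpha(2(d+1))$, and the subsequent $+1$ would overshoot the claimed bound by one. The decisive point is therefore that the worst case $k=d$ corresponds geometrically to the boundary polynomial $p$ being a positive multiple of a perfect square $h^2$ of a ternary form $h$ of degree $d$, so that all atoms lie on the real curve $\cZ(h)$ and there is no additional finite-zero factor to account for; this is exactly why $\beta(0)=0$ and not $\alpha(0)$ enters, recovering the sharp value $\tfrac32 d(d+1)$. No input beyond (\ref{theorem47}) and $\beta\leq\alpha$ is required.
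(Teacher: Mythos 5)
Your proof is correct and takes essentially the same route as the paper: both deduce the bound from (\ref{theorem47}) by replacing $\beta(2(d-k))$ with $\alpha(2(d-k))$ and checking that every bracketed term is at most $\tfrac{3}{2}d(d+1)$ --- the paper's identity $\tfrac{3}{2}d(d+1)=\binom{2d+2}{2}-\binom{2d+2-k}{2}+\alpha(2(d-k))+(d-k)(k+3)-1$ together with $(d-k)(k+3)-1\geq 0$ for $k=0,\dots,d-1$ is the same computation as your quadratic $g(k)=k^2-(d-3)k+\tfrac{3}{2}d(d-1)+1$. If anything, your handling of the endpoint $k=d$ (where one must take $\beta(0)=0$ rather than $\alpha(0)=1$, since the finite-zero factor is a positive constant) is more careful than the paper's, whose displayed estimate covers only $k\leq d-1$ and passes over the $k=d$ term in silence.
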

\begin{proof}
Since $\beta(2d)\leq \alpha(2d) = \frac{3}{2}d(d-1)+1$ and $(d-k)(k+3)-1\geq 0$ for all $d\in\nset$ and $k=0,...,d-1$, we have for (\ref{theorem47})
\begin{align*}
\frac{3}{2}d(d+1) &= \begin{pmatrix} 2d+2\\2\end{pmatrix} - \begin{pmatrix} d+2\\2\end{pmatrix}\\
&= \begin{pmatrix} 2d+2\\2\end{pmatrix} - \begin{pmatrix} 2d-k+2\\2\end{pmatrix} + \alpha(d-k) + (d-k)(k+3)-1\\
&\geq \begin{pmatrix} 2d+2\\2\end{pmatrix} - \begin{pmatrix} 2d-k+2\\2\end{pmatrix} + \alpha(d-k).
\end{align*}
Inserting the latter into  (\ref{theorem47}) we obtain the assertion.
\end{proof}

In Table \ref{tab:catBounds} we collect some numerical cases of Carath\'eodory bounds.

\begin{table}[!ht]
\begin{tabular}{r|r|rrrr|c}
 & \multicolumn{1}{|c|}{Lower} & \multicolumn{4}{c|}{Upper Bounds for $\cat_{2d}$ from} & known\\
\multicolumn{1}{c|}{$2d$}  & Bounds $\cN_{\sB_{3,2d}}$ & Prop.\ \ref{richtercor} & Thm.\ \ref{worstupperbound} & Thm.\ \ref{thm:caraBounds} &  Thm.\ \ref{catzeroschoi} & $\cat_{2d}$\\\hline
2    &      3 & 6      & 5      &      4 &      4 &  3 \cite{reznick92}\\
4    &      6 & 15     & 14     &     10 &      8 &  6 \cite{reznick92}\\
6    &     10 & 28     & 27     &     19 &     14 & 11 \cite{kunertPhD14}\\
8    &     15 & 45     & 44     &     31 &     22 & --\\
10   &     22 & 66     & 65     &     46 &     32 & --\\
12   &     31 & 91     & 90     &     64 &     47 & --\\
14   &     40 & 120    & 119    &     85 &     65 & --\\
16   &     51 & 153    & 152    &    109 &     86 & --\\
18   &     64 & 190    & 189    &    136 &    110 & --\\
20   &     77 & 231    & 230    &    166 &    137 & --\\
40   &    287 & 861    & 860    &    631 &    572 & --\\
100  &   1717 & 5151   & 5150   &   3826 &   3677 & --\\
1000 & 167167 & 501501 & 501500 & 375751 & 374252 & --\\
\end{tabular}
\caption{Bounds on the Carath\'eodory numbers $\cat_{2d}$ for $d=1,...,10,20,50,500$ from Proposition \ref{richtercor} and Theorems \ref{worstupperbound}, \ref{thm:caraBounds},  \ref{catzeroschoi}.\label{tab:catBounds}}
\end{table}

The next proposition is also due to Choi--Lam--Reznick \cite{choi80}. We will use it to derive a  bound for the Carath\'eodory number $\cat_{\sB_{4,4}}$.

\begin{prop}\label{prop:ChoiLamReznick2}
If $p\in \cB_{4,4}$  and $|\cZ_\pset(p)|>11$, then $p$ is a sum of at most six squares of quadratics.
\end{prop}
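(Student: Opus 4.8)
The plan is to prove the statement under the (evidently intended) standing hypothesis that $p$ is nonnegative, i.e.\ $p\in\Pos(\rset^4)\cap\cB_{4,4}$; without it the conclusion cannot hold, since a sum of squares is automatically nonnegative. I would work throughout with the degree-$2$ Veronese embedding. Let $\mathbf{m}(x)=(x_1^2,x_1x_2,\dots,x_4^2)$ be the vector of the $\binom{5}{2}=10$ monomials spanning $\cB_{4,2}$, so that every quartic admits Gram representations $p=\mathbf{m}(x)^T G\,\mathbf{m}(x)$ with $G$ a symmetric $10\times 10$ matrix, and $p$ is a sum of at most $r$ squares of quadratics if and only if it admits such a representation with $G\succeq 0$ and $\rank G\le r$. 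The two things I must extract from the hypothesis $|\cZ_\pset(p)|>11$ are: (a) that $p$ is a sum of squares at all, i.e.\ that the Gram spectrahedron $\{G=G^T\succeq 0:\mathbf m^TG\mathbf m=p\}$ is non-empty; and (b) that some member of it has rank at most $6$.

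For the rank bound in (b), the guiding observation is that zeros produce kernel vectors: if $G\succeq 0$ is any PSD Gram matrix of $p$, then for every $x\in\cZ_\pset(p)$ one has $\mathbf m(x)^T G\,\mathbf m(x)=p(x)=0$ and hence $\mathbf m(x)\in\ker G$. Consequently $\rank G\le 10-s$, where $s=\dim\operatorname{span}\{\mathbf m(x):x\in\cZ_\pset(p)\}$. By duality a linear relation among the $\mathbf m(x)$ is exactly a quadratic form vanishing on $\cZ_\pset(p)$, so $s=10-\dim\{q\in\cB_{4,2}:q|_{\cZ_\pset(p)}=0\}$. Thus the task reduces to showing that more than $11$ zeros force the space of quadrics through them to have dimension at most $6$, equivalently $s\ge 4$, which then yields $\rank G\le 6$ and the six-square bound.

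I would establish (a) and the estimate $s\ge 4$ together by a case analysis on the geometry of $\cZ_\pset(p)$, following Choi--Lam--Reznick \cite{choi80}. Note first that $s\le 9$ for any nonzero $p$, so more than $11$ zeros are automatically in special position; the issue is only to rule out $s\le 3$, i.e.\ the existence of a $7$-dimensional space of quadrics through all the zeros. A $7$-dimensional linear system of quadrics in $\pset(\rset^3)$ has a very restricted base locus — the extreme case being the $7$-dimensional system of quadrics containing a fixed line — so $s\le 3$ can occur only when the zeros accumulate on such a low-degree locus. Here I would exploit that each zero of a nonnegative quartic is a double point (its gradient vanishes, being a minimum): for instance, three distinct zeros on a line $\ell$ force the binary quartic $p|_\ell$ to vanish identically, whence $\ell\subseteq\cZ_\pset(p)$ and $p$ lies in the square of the ideal of $\ell$. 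This strong divisibility reduces the question to a genuinely lower-dimensional sum-of-squares problem and simultaneously forces the Gram rank down. Treating the finitely many such degenerate loci in this way disposes of both (a) and the bound $\rank G\le 6$.

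The main obstacle is exactly step (a) intertwined with the degenerate configurations: proving that a nonnegative quaternary quartic with more than $11$ zeros must be a sum of squares is the substantive part, and it is precisely where a PSD-but-not-SOS form could a priori hide. The number $11$ is sharp as the maximal number of projective zeros of a nonnegative quaternary quartic that fails to be a sum of squares, and pinning it down requires the structural boundary analysis of the PSD and SOS cones carried out in \cite{choi80}; I would invoke that analysis rather than reproduce it.
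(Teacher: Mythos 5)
The first thing to note is that the paper does not prove this proposition at all: it is stated as a known theorem and immediately attributed to Choi--Lam--Reznick (``The next proposition is also due to Choi--Lam--Reznick \cite{choi80}''), then used as a black box in the proof of Theorem \ref{thm:cara44}. So there is no proof in the paper to compare yours against step by step; the only question is whether your proposal constitutes a proof on its own.

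It does not, and you essentially say so yourself. The Gram-matrix apparatus you set up is correct: any PSD Gram matrix $G$ of $p$ satisfies $G\,\mathbf{m}(x)=0$ for every $x\in\cZ_\pset(p)$, hence $\rank G\le 10-s$ where $s$ is the dimension of the span of the Veronese images of the zeros, and by duality $s=10-\dim\{q\in\cB_{4,2}: q|_{\cZ_\pset(p)}=0\}$; likewise your observations that zeros of a nonnegative form are critical points, and that three collinear zeros force the whole line into $\cZ_\pset(p)$ with $p$ lying in the square of the line's ideal, are sound. But the two load-bearing steps are never established: (a) that $p$ is a sum of squares at all, and (b) that $s\ge 4$ whenever $|\cZ_\pset(p)|>11$, which requires ruling out or separately handling the degenerate configurations (e.g.\ zeros concentrated on a line, where $s=3$ and the kernel argument alone only gives rank $\le 7$). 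For (b) you offer a sketch (``treating the finitely many such degenerate loci \dots disposes of both''), and for (a) you explicitly invoke the boundary analysis of \cite{choi80} --- which is precisely the substance of the theorem being proved. So, judged as a self-contained argument, there is a genuine gap: the crux is cited, not proved; judged against the paper, your treatment amounts to the same appeal to \cite{choi80}, wrapped in a partial (correct but incomplete) reduction. One genuinely useful point you add: the statement as printed omits the hypothesis that $p$ be nonnegative, which is indeed required --- a sum of squares is automatically nonnegative --- and which is implicitly assumed in the paper's application of the proposition.
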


\begin{thm}\label{thm:cara44}
$\cat_{\sB_{4,4}}\leq 26$.
\end{thm}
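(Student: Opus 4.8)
The plan is to run the boundary-reduction of Proposition \ref{propstoboundary} and then bound the Carath\'eodory number of each boundary sequence by exploiting the Choi--Lam--Reznick decomposition of Proposition \ref{prop:ChoiLamReznick2}. First I would note that $\pset(\rset^3)$ is compact and that condition (\ref{cond+}) holds with $e:=x_1^4+x_2^4+x_3^4+x_4^4\in\cB_{4,4}$, so that Proposition \ref{propstoboundary} applies and by (\ref{caboundary}) it suffices to prove $\cat_{\sB_{4,4}}(s)\leq 25$ for every $s$ in the boundary $\partial\cS$ of the moment cone $\cS=\cS_{\sB_{4,4}}$. Fixing such an $s$, I take a representing measure $\mu=\sum_{i=1}^l c_i\delta_{x_i}$ with $l=\cat_{\sB_{4,4}}(s)$ atoms; exactly as in the reduction used for (\ref{theorem47}), minimality of $l$ forces $s_{\sB_{4,4}}(x_1),\dots,s_{\sB_{4,4}}(x_l)$ to be linearly independent, since otherwise a nontrivial linear relation among them could be used to drop one atom. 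Hence the $l\times 35$ matrix $M$ with rows $s_{\sB_{4,4}}(x_i)^T$ has $\rank M=l$, where $35=|\sB_{4,4}|=\binom{7}{4}$, and its kernel consists precisely of the coefficient vectors of the quartic forms in $\cB_{4,4}$ that vanish at all atoms.

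Next I invoke Proposition \ref{prop:zerosSupport}: since $s\in\partial\cS$ there is a nonzero $p\in\cB_{4,4}$, nonnegative on $\pset(\rset^3)$, with $L_s(p)=0$, so that all atoms lie in $\cZ_\pset(p)$. I then split into two cases according to $|\cZ_\pset(p)|$. If $|\cZ_\pset(p)|\leq 11$, the atoms are distinct points of $\cZ_\pset(p)$ and so $l\leq 11$. If $|\cZ_\pset(p)|>11$, Proposition \ref{prop:ChoiLamReznick2} lets me write $p=\sum_{j=1}^6 q_j^2$ with $q_j\in\cB_{4,2}$; because $p\neq 0$, at least one $q_j$, say $q_1$, is nonzero, and at each atom $x_i$ the identity $0=p(x_i)=\sum_j q_j(x_i)^2$ forces $q_1(x_i)=0$. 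Thus all atoms are common zeros of a fixed nonzero quadric.

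The crux is then a dimension count in $\ker M$. Every degree-$4$ form $x^\beta q_1$ with $|\beta|=2$ vanishes at all atoms, so its coefficient vector lies in $\ker M$; these forms span $q_1\cdot S_2$ inside $S_4$, where $S=\rset[x_1,\dots,x_4]$ and $S_d$ is its degree-$d$ part. Since $S$ is an integral domain and $q_1\neq 0$, multiplication by $q_1$ is injective on $S_2$, giving $\dim(q_1\cdot S_2)=\dim S_2=\binom{5}{2}=10$. Hence $\dim\ker M\geq 10$, and rank--nullity yields $l=\rank M\leq 35-10=25$. Combining the two cases gives $\cat_{\sB_{4,4}}(s)\leq 25$ on $\partial\cS$, and (\ref{caboundary}) then produces $\cat_{\sB_{4,4}}\leq 26$. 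I expect the only genuinely delicate point to be the passage from ``the atoms are common zeros of the $q_j$'' to a \emph{guaranteed} $10$-dimensional family of vanishing quartics: this hinges on using a single nonzero quadric factor together with injectivity of multiplication by it, rather than attempting to estimate the full degree-$4$ part of the ideal generated by all six quadrics, which would be considerably harder to control.
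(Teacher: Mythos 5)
Your proposal is correct and follows essentially the same route as the paper's proof: reduction to the boundary via Proposition \ref{propstoboundary} and (\ref{caboundary}), the case split $|\cZ_\pset(p)|\leq 11$ versus $p=\sum_j q_j^2$ from Proposition \ref{prop:ChoiLamReznick2}, and the identical kernel dimension count $l\leq |\sB_{4,4}|-|\sB_{4,2}|=35-10=25$ using the quartics $x^\beta q_1$, $|\beta|=2$. The only difference is cosmetic: you justify the linear independence of these kernel vectors by injectivity of multiplication by $q_1$ in the polynomial ring, a point the paper merely asserts.
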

\begin{proof}
Let $s$ be a boundary moment sequence. Then there exists $p\in  \cB_{4,4}, p\neq 0,$ such that $p\in {\Pos}(\rset^3)$ and  $L_s(p)=0$. By Proposition \ref{prop:ChoiLamReznick2},   $|\cZ(p)|\leq 11$ or we have $p=f_1^2+...+f_6^2$ for some $f_1,...,f_6\in \cB_{4,2}$.  
In the following proof we give an upper bound on the maximal number $l$ of linearly independent vectors $s_{\sB_{4,4}}(x_1),...,s_{\sB_{4,4}}(x_l)$ with $x_i\in\cZ(p).$ By Theorem \ref{thm:LinIndepZeros}, this number $l$ is an upper bound of $\cat_{\sB_{4,4}}(s)$. We proceed in a similar manner  as in the proof of Theorem \ref{thm:caraBounds}.

By Proposition \ref{prop:ChoiLamReznick2} we have two cases:
\begin{itemize}
\item[a)] $|\cZ(p)|\leq 11$, 
\item[b)] $p= f_1^2 + ... + f_k^2$, $k\leq 6$.
\end{itemize}

In the case a) we clearly have  $l\leq |\cZ(p)|\leq 11$. 

Now we treat
case b). Clearly, $\cZ(f_1^2+...+f_k^2)\subseteq\cZ(f_1^2)=\cZ(f_1).$
Hence it suffices to determine the maximal number  $l$ for a single square $p=f^2$, where $f\in\sB_{4,2}, f\neq 0$. Let $x_1,...,x_l\in\cZ(f)$ be such that the set $\{s_{\sB_{4,4}}(x_i)\}_{i=1,...,l}$ is linearly independent. Define
\[M(x_1,...,x_l) := \begin{pmatrix}s_{\sB_{4,4}}(x_1)^T\\ \vdots\\ s_{\sB_{4,4}}(x_l)^T\end{pmatrix},\]
$f_\alpha := x^\alpha f$ for  $\alpha\in \nset_0^4, |\alpha| = 2$, and $\tilde{f}_\alpha$ by $f_\alpha(\,\cdot\,) = \langle \tilde{f}_\alpha,s_{\sB_{4,4}}(\,\cdot\,)\rangle$. Then we have $\tilde{f}_\alpha\in\ker M(x_1,...,x_l)$, since $s_{\sB_{4,4}}(x_i)^T\cdot \tilde{f}_\alpha = \langle \tilde{f}_\alpha, s_{\sB_{4,4}}(x_i)\rangle = x_i^\alpha f(x_i) = 0$. The vectors $\tilde{f}_\alpha$ are linearly independent. Therefore, $\dim\ker M(x_1,...,x_l) \geq \# f_\alpha = |\sB_{4,2}|$ and
\[l = \rank M(x_1,...,x_l)\leq |\sB_{4,4}| - |\sB_{4,2}| = \begin{pmatrix}7\\ 3\end{pmatrix} - \begin{pmatrix}5\\ 3\end{pmatrix} = 25.\]
This proves that the moment sequence $s$ can be represented by at most 25 atoms.

Summarizing both cases, we have shown that each $s\in\partial\cS$ has a  $k$-atomic representing measure with $k\leq 25$. Therefore, by Proposition \ref{propstoboundary}, $\cat_{\sB_{4,4}}\leq 25 + 1 = 26$.
\end{proof}

Proposition \ref{richtercor} yields $\cat_{\sB_{4,4}}\leq 35$, while Theorem \ref{worstupperbound} gives $\cat_{\sB_{4,4}}\leq 34$. Combining the upper bound of Theorem \ref{thm:cara44} with the lower bound from Theorem \ref{densethm} we get
\begin{equation}
\cN_{\sB_{4,4}} = \cN_{\sA_{3,4}} = 10 \leq \cat_{\sB_{4,4}} \leq 26.
\end{equation}

Now we give another approach to obtain estimates of the Carath\'eodory number $\cat_{\sB_{3,2d}}$ from above. It is based on Bezout's Theorem.

Let $f_1\in {\cB}_{3,d_1}$ and $f_2\in {\cB}_{3,d_2}$. 
For each point $t\in \cZ_{\pset}(f_1)\cap \cZ_{\pset}(f_2)$ the {\it intersection multiplicity} $I_t(f_1,f_2)\in \nset$ of the projective curves $f_1=0$ and $f_2=0$ at $t$ is defined in \cite[III,  Section 2.2]{walkerAlgCurves}.
We do not restate the precise definition here. In what follows we use only the fact that $I_t(f_1,f_2)\geq 2$ if $t$ is a singular point of one of the curves $f_1=0$ or $f_2=0$.

We use the following version of \textit{Bezout's Theorem}. The symbol $|Z|$ denotes the  number of points of a set $Z$.

\begin{lem}\label{bezoutstheorem}
If $f_1\in {\cB}_{3,d_1}$ and $f_2\in {\cB}_{3,d_2}$ are relatively prime in $\rset[x_1,x_2,x_3]$, then 
\begin{align*}
\sum\nolimits_{t\in \cZ_{\pset}(f_1)\cap \cZ_{\pset}(f_2)}~ I_t(f_1,f_2)\leq d_1 d_2.
\end{align*}
\end{lem}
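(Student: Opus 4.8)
The plan is to obtain the asserted inequality by restricting the classical Bezout equality over the algebraically closed field $\cset$ to the real points. Write $\cZ_\pset^\cset(f_i)$ for the zero set of $f_i$ in the complex projective plane $\pset(\cset^3)$. The classical Bezout Theorem, in the form given in \cite{walkerAlgCurves}, asserts that if $f_1$ and $f_2$ have no common factor in $\cset[x_1,x_2,x_3]$, then
\[
\sum\nolimits_{t\in \cZ_\pset^\cset(f_1)\cap \cZ_\pset^\cset(f_2)} I_t(f_1,f_2) = d_1 d_2,
\]
where $I_t(f_1,f_2)$ is the intersection multiplicity defined through the local ring at $t$. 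Since every real projective point of the curves is in particular a complex projective point, and the multiplicities $I_t(f_1,f_2)$ are non-negative integers, the sum over the real zero set $\cZ_\pset(f_1)\cap \cZ_\pset(f_2)$ is a partial sum of the left-hand side and is therefore bounded by $d_1 d_2$. No separate comparison between a ``real'' and a ``complex'' intersection multiplicity is required, because the number $I_t$ appearing in the statement is by definition the quantity occurring in the complex Bezout Theorem.

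The one point that genuinely needs justification is the passage from the hypothesis (relative primality in $\rset[x_1,x_2,x_3]$) to relative primality in $\cset[x_1,x_2,x_3]$, which is what the complex Bezout Theorem requires. I would establish this by complex conjugation. Suppose, for contradiction, that $f_1$ and $f_2$ share an irreducible common factor $p\in\cset[x_1,x_2,x_3]$. Conjugating coefficients, the polynomial $\bar p$ is then also a common factor, since $f_1,f_2$ have real coefficients. If $p$ is associate to a real polynomial, that real factor already divides both $f_i$ in $\rset[x_1,x_2,x_3]$, contradicting the hypothesis. Otherwise $p$ and $\bar p$ are non-associate irreducibles, so $q:=p\,\bar p$ divides both $f_1$ and $f_2$ in $\cset[x_1,x_2,x_3]$; since $\overline{p\,\bar p}=\bar p\,p=p\,\bar p$, the polynomial $q$ has real coefficients, and because $f_i$ and $q$ are real the quotient $f_i/q$ is real as well. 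Thus $q$ is a non-constant common real factor of $f_1$ and $f_2$, again a contradiction. Hence $f_1,f_2$ are coprime over $\cset$.

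I expect this descent of coprimality from $\rset$ to $\cset$ to be the main (and essentially only) obstacle; once it is in place, the remainder is a formal restriction of the complex identity to the real locus, using non-negativity of the multiplicities and the fact that a real common zero of $f_1$ and $f_2$ is literally a point of $\cZ_\pset^\cset(f_1)\cap\cZ_\pset^\cset(f_2)$ carrying the same value $I_t(f_1,f_2)$. I would therefore keep the write-up short: invoke complex Bezout for the equality, insert the conjugation argument as the one lemma-level step, and conclude the inequality by dropping the non-real terms.
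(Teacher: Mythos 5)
Your proposal is correct and takes essentially the same route as the paper: the paper's entire proof is the citation ``See e.g.\ \cite[p.~59]{walkerAlgCurves}'', i.e.\ the classical Bezout theorem over an algebraically closed field, and your argument supplies exactly the routine details that citation leaves implicit --- the descent of coprimality from $\rset[x_1,x_2,x_3]$ to $\cset[x_1,x_2,x_3]$ via conjugation (including the observation that an irreducible factor associate to its conjugate is associate to a real polynomial), and the restriction of the complex equality to the real intersection points using non-negativity of the multiplicities. Both steps are sound, so nothing is missing.
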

\begin{proof}
See e.g. \cite[p. 59]{walkerAlgCurves}.
\end{proof}

\begin{lem}\label{zerolemmabezout}
Let $s$ be a moment sequence for $\sB_{3,2d}$. Suppose $p\in  \cB_{3,k}$ is irreducible in $\rset[x_1,x_2,x_3]$, $k\leq d$, and  $L_s(p^2 (x_1^2+x_2^2+x_3^2)^{d-k})=0$. Then
\[\cat_{2d}(s)\leq d k + 1.\]
\end{lem}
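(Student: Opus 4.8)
The plan is to combine Proposition~\ref{prop:zerosSupport}, which localizes all representing measures, with Bezout's Theorem (Lemma~\ref{bezoutstheorem}), which caps the number of nodes lying on the curve $\{p=0\}$. First I would set $P:=p^2\,(x_1^2+x_2^2+x_3^2)^{d-k}$. Since $p$ is real and $(x_1^2+x_2^2+x_3^2)^{d-k}$ is strictly positive on $S^2_+$, the form $P$ lies in $\cB_{3,2d}$, is non-negative on $\pset(\rset^2)$, and satisfies $L_s(P)=0$ by hypothesis. For any representing measure $\mu$ of $s$ we then have $\int P\,d\mu=L_s(P)=0$ with $P\geq 0$, so $\mu$ is supported on $\cZ_\pset(P)$ (in particular $s\in\partial\cS$, cf.\ Proposition~\ref{prop:zerosSupport}). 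On $\pset(\rset^2)$ the factor $(x_1^2+x_2^2+x_3^2)^{d-k}$ has no zeros, whence $\cZ_\pset(P)=\cZ_\pset(p)=:C$, an irreducible projective curve of degree $k$ since $p$ is irreducible. I would then fix a minimal representing measure $\mu=\sum_{i=1}^l c_i\delta_{x_i}$ with $l=\cat_{2d}(s)$ and $c_i>0$; as in the proof of Theorem~\ref{thm:cara44}, minimality forces the vectors $\{s_{\sB_{3,2d}}(x_i)\}_{i=1}^l$ to be linearly independent (otherwise one atom could be deleted), and all atoms $x_i$ lie on $C$.

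Next I would bound $l$ by producing an auxiliary curve of degree $d$ through the atoms and invoking Bezout. Consider the quotient $\cB_{3,d}/(p\cdot\cB_{3,d-k})$: a form $g\in\cB_{3,d}$ fails to be divisible by the irreducible $p$ exactly when its class there is non-zero, and evaluation at a point of $C$ descends to a functional on this quotient. A dimension count should then yield a $g\in\cB_{3,d}$ with $p\nmid g$ vanishing at the atoms. The reason for halving the degree is the multiplicity gain: $g^2\in\cB_{3,2d}$ vanishes to second order at each atom, so every atom is a singular point of the curve $\{g^2=0\}$, whence $I_{x_i}(p,g^2)\geq 2$ by the fact recalled before Lemma~\ref{bezoutstheorem}. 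As $p$ is irreducible and $p\nmid g^2$, the forms $p$ and $g^2$ are relatively prime, so Lemma~\ref{bezoutstheorem} gives $2\cdot\#\{\text{atoms on }C\}\le\sum\nolimits_t I_t(p,g^2)\le\deg p\cdot\deg g^2=2dk$. This multiplicity-two bookkeeping is precisely what converts the trivial estimate $\dim\big(\cB_{3,2d}/p\cB_{3,2d-k}\big)\approx 2dk$ into the sharper value $dk$, and together with one possibly uncaptured node it should produce $l\le dk+1$.

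The main obstacle is the existence step above: guaranteeing a form $g\in\cB_{3,d}$, not divisible by $p$, that vanishes at enough of the atoms, and organizing the count so that exactly $dk+1$ (rather than $dk$ or $2dk$) appears. This forces a comparison of $l$ with $\dim\big(\cB_{3,d}/p\cB_{3,d-k}\big)=\tfrac12 k(2d-k+3)$ and a check that the Bezout estimate $2l\le 2dk$ still holds when $g$ is allowed to miss at most one atom; the extra $+1$ is then absorbed either by that missed atom or, alternatively, by applying Proposition~\ref{propstoboundary} together with inequality~(\ref{caboundary}) to pass from the boundary value to $\cat_{2d}(s)$. Verifying that the second-order contribution is genuinely present at every atom, so that no cancellation deflates the intersection multiplicities, is the delicate point, and it is here that the irreducibility of $p$ and the square structure of $g^2$ must be used with care.
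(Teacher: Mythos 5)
Your first paragraph is sound and agrees with the paper: setting $P=p^2(x_1^2+x_2^2+x_3^2)^{d-k}$ localizes every representing measure on $\cZ_\pset(p)$, and the tangency/multiplicity-two idea combined with Lemma \ref{bezoutstheorem} is exactly the mechanism that converts $2dk$ into $dk$. But the step you yourself flag as the ``main obstacle'' is a genuine gap, and it cannot be closed by the dimension count you propose. You need a form $g\in\cB_{3,d}$ with $p\nmid g$ vanishing at all (or all but one) of the $l$ atoms, and the count only guarantees such a $g$ when the number of prescribed points is strictly less than $\dim\bigl(\cB_{3,d}/p\,\cB_{3,d-k}\bigr)=\tfrac12 k(2d-k+3)=dk-\tfrac12 k(k-3)$. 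This quantity equals $dk+1$ for $k=1,2$ and is at most $dk$ for $k\geq 3$. So under the contradiction hypothesis $l\geq dk+2$ you would need to interpolate through $l-1\geq dk+1$ points, which is never guaranteed: the atoms may well impose independent conditions, and then every degree-$d$ form through them is divisible by $p$. The interpolation route fails precisely in the range where the bound has to be proved, and no reorganization of the bookkeeping (missing one node, passing to $g^2$, etc.) repairs this.

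The paper obtains the auxiliary curve by convex duality instead of interpolation, and this is the idea your proposal is missing. One works inside the face $\tilde{\cS}:=\cS(\sB_{3,2d},\cZ(p))$ of the moment cone; by Proposition \ref{limitcar} the sets $\tilde{\cS}_k$ are closed, so it suffices to treat relatively inner points $s$ of $\tilde{\cS}$. For such $s$ and a point $x\in\cZ(p)$, Proposition \ref{propstoboundary} (applicable since $\cZ(p)$ is compact) yields $s':=s-c_s(x)\,s_{\sB_{3,2d}}(x)\in\partial\tilde{\cS}$, and a supporting hyperplane of $\tilde{\cS}$ at $s'$ produces $q\in\cB_{3,2d}$ with $L_{s'}(q)=0$, $L_s(q)>0$, and $q\geq 0$ \emph{on $\cZ(p)$ only} --- no global nonnegativity and no square structure are needed. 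Then $q(x)\neq 0$ (otherwise $L_s(q)=0$, contradicting that $s$ is relatively inner), so the irreducible $p$ does not divide $q$ for free; nonnegativity of $q$ along the curve $p=0$ forces intersection multiplicity at least $2$ at every point of $\cZ(p)\cap\cZ(q)$; Lemma \ref{bezoutstheorem} then gives $|\cZ(p)\cap\cZ(q)|\leq dk$; since every representing measure of $s'$ is supported there, $\cat_{2d}(s')\leq dk$ and hence $\cat_{2d}(s)\leq\cat_{2d}(s')+1\leq dk+1$. Note how the ``$+1$'' actually arises: it is the one atom at $x$ subtracted to reach the boundary of the face, not a node missed by an interpolating curve.
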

\begin{proof}
Consider the moment cone $\tilde{\cS}:=\cS(\sB_{3,2d},\cZ(p))$.  Then $\tilde{\cS}$   is an exposed face of the moment cone $\cS=\cS(\sB_{3,2d},\pset(\rset^2))$ and $s\in \tilde{\cS}$. By  Proposition \ref{limitcar},  $\cS$ is closed and  so is $\tilde{\cS}$.  Clearly, each point of $\tilde{\cS}$ is the limit of relative inner points of $\tilde{\cS}$. Therefore, since the sets $\tilde{\cS}_k$ are closed  by  Proposition \ref{limitcar},  it is sufficient to prove the assertion for all relatively inner points of the cone $\tilde{\cS}$.

Let $s$ be a relatively  inner point of $\tilde{\cS}$ and $x\in\cZ(p)$. Setting $e:=x_1^{2d} + x_2^{2d} + x_3^{2d}$,  condition (\ref{cond+}) holds. Since $\cZ(p)$ is compact,  Proposition \ref{propstoboundary} applies, so the supremum $c_s(x) := \sup \, \{c: s-c\cdot s_{\sB_{3,2d}}(x)\in\tilde{\cS}\}$ is attained and $s' := s - c_s(x)\cdot s_{\sB_{3,2d}}(x) \in \partial \tilde{\cS}$.
Thus there exists a supporting hyperplane of the cone $\tilde{\cS}$ at $s'$. Hence there exists a polynomial $q\in\cB_{3,2d}$ such that $L_{s'}(q)=0$, $L_s(q)>0$, and $q\geq 0$ on $\cZ(p)$. 
From $L_{s'}(q)=L_s(q)-c_s(x)q(x)=0$ it follows that $q(x)\neq 0$. (Indeed,  otherwise $L_s(q)=0$, so  $s$ would be a boundary point of  $\tilde{\cS},$ a contradiction.) Since $p(x)=0$ and $q(x)\neq 0$, the irreducible polynomial $p$ is not a factor of $q$, so $p$ and $q$ are relatively prime and Bezout's Theorem applies.

Since $q(y)\geq 0$ on $\cZ(p)$, for each intersection point of $q$ and $p$ has the intersection multiplicity of at least $2$. Therefore,  by Lemma \ref{bezoutstheorem},
\begin{align}\label{zpzq}
2|\cZ(q)\cap \cZ(p)|\leq \deg (q) \deg (p)= 2d k.
\end{align}
Since each representing measure of $s'$ is supported on $\cZ(p)\cap \cZ(q)$, (\ref{zpzq}) implies that $\cat_{2d}(s')\leq  d k$. Hence $\cat_{2d}(s)\leq \cat_{2d}(s') + 1 \leq d k + 1$.
\end{proof}

Our main result in  this section is the following theorem.

\begin{thm}\label{catzeroschoi}
$\cat_{2d} \leq \alpha(2d) + 1 = \frac{3}{2}d(d-1) + 2$ for $d\in \nset$, $d\geq 5$.
\end{thm}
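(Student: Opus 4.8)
The plan is to reduce to boundary sequences and then split according to whether the annihilating nonnegative form has finitely or infinitely many zeros. Since $\pset(\rset^2)$ is compact and condition (\ref{cond+}) holds with $e:=x_1^{2d}+x_2^{2d}+x_3^{2d}\in\cB_{3,2d}$, Proposition \ref{propstoboundary} reduces the claim to showing that $\cat_{2d}(s)\le\alpha(2d)$ for every boundary sequence $s\in\partial\cS$; adding the $+1$ from (\ref{caboundary}) then yields the assertion. Fix such an $s$. As in the proof of Theorem \ref{thm:caraBounds} there is a form $p\in\cB_{3,2d}$ with $p\ge 0$ on $\pset(\rset^2)$, $p\ne 0$ and $L_s(p)=0$, so that every representing measure of $s$ is supported on $\cZ(p)$; moreover I may assume, as in that proof, that the atoms of a minimal representing measure have linearly independent moment vectors, so that $\cat_{2d}(s)$ equals their number.

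Next I would invoke the Choi--Lam--Reznick dichotomy (Proposition \ref{factoripolpos}). If $\cZ(p)$ is finite, then by the definition of $\beta(2d)$ and the bound $\beta(2d)\le\alpha(2d)$ one has $\cat_{2d}(s)\le|\cZ(p)|\le\beta(2d)\le\alpha(2d)$, and this case is finished. If $\cZ(p)$ is infinite, Proposition \ref{factoripolpos} provides a factorization $p=p_0\,q^2$ with $p_0\in\cB_{3,2d_1}$ nonnegative with finite zero set, $q\in\cB_{3,d_2}$ indefinite, and $d_1+d_2=d$, $d_2\ge 1$. I then split the atoms into those lying on the finite set $\cZ(p_0)$ and those lying on the curve $\cZ(q)$.

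For the first group I bound the count by $|\cZ(p_0)|\le\beta(2d_1)\le\alpha(2d_1)$. For the second group I would run the Bezout argument of Lemma \ref{zerolemmabezout}: passing to the exposed face $\cS(\cB_{3,2d},\cZ(q))$ and peeling off one atom via Proposition \ref{propstoboundary}, I obtain a supporting form $Q\in\cB_{3,2d}$ that is nonnegative on $\cZ(q)$, vanishes at the remaining atoms, and is not divisible by any irreducible factor of $q$. Because each atom on $\cZ(q)$ is a double zero of $p=p_0q^2$, every intersection point of $q$ and $Q$ that is an atom has intersection multiplicity at least two; Bezout's Theorem (Lemma \ref{bezoutstheorem}), applied to $Q$ against $q$ (equivalently, against each irreducible factor of $q$), then caps the number of such atoms in terms of $d$ and $d_2$. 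Combining the two contributions produces an upper bound for $\cat_{2d}(s)$ depending only on the splitting $d_1+d_2=d$.

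Finally I would maximize this bound over all admissible splittings and compare it with $\alpha(2d)=\tfrac32 d(d-1)+1$. The finite-zero case contributes the term $\alpha(2d)$, growing like $\tfrac32 d^2$, whereas the infinite-zero estimate is a quadratic in $d$ with leading coefficient $1$; a short computation shows that the latter is dominated by $\alpha(2d)$ precisely once $d\ge 5$, which is the origin of the hypothesis (for $d\le 4$ the infinite-zero term is the larger one, in agreement with the sharper entries of Table \ref{tab:catBounds}). I expect the genuine difficulty to lie entirely in the infinite-zero case: ensuring that the supporting form $Q$ is relatively prime to the factors of $q$ and that the double-zero structure forces tangency at every relevant atom, so that the Bezout count is actually halved, and then verifying the arithmetic optimization that isolates the threshold $d\ge 5$.
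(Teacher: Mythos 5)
Your overall route (reduce to boundary sequences via Proposition \ref{propstoboundary} and (\ref{caboundary}), invoke the Choi--Lam--Reznick dichotomy, then count atoms by Bezout) is the same as the paper's, but there is a genuine gap at the decisive step, and it is exactly the point you flag at the end without resolving: the assertion that the supporting form $Q$ ``is not divisible by any irreducible factor of $q$.'' The peeling argument in Lemma \ref{zerolemmabezout} only yields $Q(x)\neq 0$ at the \emph{single} peeled atom $x$; that rules out divisibility of $Q$ by those irreducible factors of $q$ vanishing at $x$, and by \emph{all} factors only when $q$ itself is irreducible. But Proposition \ref{factoripolpos} delivers a $q$ that may be reducible, say $q=q_1\cdots q_r$, and nothing in the supporting-hyperplane construction prevents $Q$ from being divisible by some $q_j$ with $q_j(x)\neq 0$ (for instance $Q=q_j^2R$ with $R\geq 0$ on the other components and $R(x)>0$). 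In that case $\cZ(q_j)\subseteq\cZ(Q)$, so $Q$ and $q$ are not relatively prime, Lemma \ref{bezoutstheorem} does not apply, and your argument gives \emph{no} bound on the number of atoms lying on $\cZ(q_j)$. This is precisely why the paper refines the factorization to $f=p\,q_1^2\cdots q_r^2$ with each $q_i$ \emph{irreducible} and indefinite, decomposes the boundary sequence as $s'=s_0+s_1+\cdots+s_r$ along a disjoint splitting of $\cZ(f)$ into $Z\subseteq\cZ(p)$ and $Z_i\subseteq\cZ(q_i)$, and applies Lemma \ref{zerolemmabezout} to each $s_i$ separately, where irreducibility makes coprimality automatic.

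The componentwise repair is not cosmetic, because each application of Lemma \ref{zerolemmabezout} costs its own peeled atom: the paper's bound is $\cat_{2d}(s')\leq\alpha(2k)+\sum_{i}(d\,d_i+1)=\alpha(2k)+d(d-k)+r$ with $r\leq d-k$, and the theorem's hypothesis enters through $\alpha(2d)-\alpha(2k)-(d+1)(d-k)=\tfrac{1}{2}(d-k)(d+3k-5)\geq 0$, which holds for all admissible $k$ exactly when $d\geq 5$. Your accounting, with a single $+1$ for all of $\cZ(q)$, would instead need only $\tfrac{1}{2}(d-k)(d+3k-3)\geq 1$, i.e.\ $d\geq 4$; the fact that your optimization is supposed to single out $d\geq 5$ is a further sign that the count you describe is not the one your argument can actually deliver. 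A smaller inaccuracy: the intersection multiplicity $\geq 2$ in Lemma \ref{zerolemmabezout} comes from $Q\geq 0$ on the real curve $\cZ(q_i)$ forcing even-order vanishing of $Q$ along the curve at each common zero, not from the atoms being double zeros of $p_0q^2$.
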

\begin{proof}
Let us consider the moment cone $\cS:=\cS(\sB_{3,2d},\pset(\rset^2))$. We proceed in a similar manner as in the proof of 
Lemma \ref{zerolemmabezout}. By   Proposition \ref{limitcar}, the sets $\cS_k$ are closed.  Hence it suffices  to prove the inequality  $\cat_{2d}(s) \leq \alpha(2d) + 1 $ for all relatively inner points of the cone $\cS$.

Let $s$ be an inner point of $\cS$ and $x\in\pset(\rset^2)$.   By Proposition \ref{propstoboundary}, the supremum  $c_s(x) := \sup\, \{c:s-c\cdot s_{\sB_{3,2d}}(x)\in\cS\}$ is attained and  $s' := s - c_s(x)\cdot s_{\sB_{3,2d}}(x)\in\partial\cS$. Then there exists a supporting hyperplane of $\cS$ at $s$, hence there is a polynomial  $f\in\cB_{3,2d}$ such that $L_{s'}(f)=0$ and $f\geq 0$ on $\pset(\rset^2)$. We apply  Proposition \ref{factoripolpos} to $f$. Then, we can write $f=p\cdot q_1^2 \cdots q_r^2$ ($r\leq d$), where $p\in {\Pos}(\pset(\rset^2))$, all $q_i$ are indefinite and irreducible in $\rset[x_1,x_2,x_3]$,  $\cZ(p)<\infty $ and all $|\cZ(q_i)| $ are infinite.
Since
\[\cZ(f) = \cZ(p)\cup \cZ(q_1)\cup\cdots\cup\cZ(q_r)\]
we find a disjoint decomposition $Z\cup Z_1\cup\cdots\cup Z_r$ of $\cZ(f)$ with $Z\subseteq\cZ(p)$ and $Z_i\subseteq\cZ(q_i)$. Let $\mu' = \sum_{j=1}^m c_j \delta_{x_j}$ be a representing measure of $s'$ and set%
\[s_0 := \sum_{x_j\in Z} c_j s_{\sB_{3,2d}}(x_j)\quad \text{and}\quad s_i := \sum_{x_j\in Z_i} c_j s_{\sB_{3,2d}}(x_j).\]
Clearly, $s' = s_0 + s_1 + \cdots + s_r$. Setting $d_i = \deg(q_i)$ and $2k = \deg(p)$, we have $d = k + d_1 + \cdots + d_r$ and $r\leq d-k$. Using Proposition \ref{factoripolpos} and Lemma \ref{zerolemmabezout} we derive
\begin{align*}
\cat_{\sB_{3,2d}}(s') &\leq \cat_{\sB_{3,2d}}(s_0) + \cat_{\sB_{3,2d}}(s_1) + \cdots + \cat_{\sB_{3,2d}}(s_r)\\
&\leq \alpha(2k) + (d\cdot d_1 + 1) + \cdots + (d\cdot d_r + 1) = \alpha(2k) + d(d-k) + r\\
&\leq \alpha(2k) + (d+1)(d-k) = \alpha(2d) - \underbrace{(\alpha(2d)-\alpha(2k)-(d+1)(d-k))}_{=\frac{1}{2}(d-k)(d+3k-5)\geq 0\ \forall d\geq 5,\ k=0,...,d}\\
&\leq \alpha(2d).
\end{align*}
Therefore, $\cat_{\sB_{3,2d}}(s) \leq \cat_{\sB_{3,2d}}(s') + 1 \leq \alpha(2d)+1 = \frac{3}{2}d(d-1)+2$ for all $d\geq 5$.
\end{proof}

\begin{exa}[$d=5$]\label{exmp:HarrisPolynomial}
W.\ R.\ Harris \cite{harris99} discovered a  polynomial $h\in \sB_{3,10}$  that is nonnegative on $\pset(\rset^2)$ with projective zero set
\[\cZ_\pset(h) = \{(1,1,0)^*, (1,1,\sqrt{2})^*,(1,1,1/2)^*\},\]
where $(a,b,c)^*$ denotes all permutations of $(a,b,c)$ including sign changes. Hence $h$ has exactly $30$  projective zeros $z_i,i=1,\dots,30.$ A computer calculation shows that the matrix $(s_{\sB_{3,10}}(z))_{z\in\cZ_\pset(h)}$ has rank $30$, i.e., the set $\{ s_{\sB_{3,10}}(z_i):i=1,\dots,30\} $ is linearly independent. Therefore, $\cat_{\sB_{3,10}} \geq 30 $ by Theorem \ref{thm:LinIndepZeros}.  Further, we compute $ \cN_{\sB_{3,10}} = 15$ and have $ \cat_{\sB_{3,10}}\leq \alpha(10)+1=37$ by Theorem \ref{catzeroschoi}. Summarizing, 
\begin{align}\label{casen=5}
\cN_{\sB_{3,10}} = 15 < 30\leq  \cat_{\sB_{3,10}}\leq 37.
\end{align}
\end{exa}

The following corollary reformulates Theorem \ref{thm:LinIndepZeros} in the present context.

\begin{cor}\label{betacarest0}
Let $d\in\nset$ and $p\in\cB_{3,2d}$. Suppose that $p\in {\Pos}(\rset^3)$, $|\cZ(p)|=\beta(2d)$, and the set $\{s_{\sB_{3,2d}}(z) : z\in\cZ(p)\}$ is linearly independent. Then
\[\beta(2d)\leq \cat_{\sB_{3,2d}}.\]
\end{cor}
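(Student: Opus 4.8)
The plan is to recognize this corollary as nothing more than a specialization of Theorem \ref{thm:LinIndepZeros} to the homogeneous setting $\sA=\sB_{3,2d}$ with $\cX=\pset(\rset^2)$, so the proof amounts to checking that the three hypotheses of that theorem are met. First I would put $k:=\beta(2d)=|\cZ(p)|$ and enumerate the projective zeros as $\cZ(p)=\{x_1,\dots,x_k\}$. By hypothesis $\{s_{\sB_{3,2d}}(x_i)\}_{i=1,\dots,k}$ is linearly independent, which is precisely the linear-independence assumption of Theorem \ref{thm:LinIndepZeros}, and the description of the zero set $\cZ(p)=\{x_1,\dots,x_k\}$ is likewise given.

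The only condition requiring a sentence of justification is the nonnegativity of $p$ on $\cX=\pset(\rset^2)$, since Theorem \ref{thm:LinIndepZeros} demands $p(x)\geq 0$ for $x\in\cX$, whereas the corollary hypothesizes $p\in\Pos(\rset^3)$. I would invoke the simple fact recorded just before Example \ref{exmp:simpleexampleforNAanduniqueness}: because $p\in\cB_{3,2d}$ is homogeneous of even degree $2d$, nonnegativity on $\rset^3$ is equivalent to nonnegativity on $\pset(\rset^2)$. Thus $p\geq 0$ on $\cX$, and all hypotheses of Theorem \ref{thm:LinIndepZeros} hold with this $p$ and these $k$ points.

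Applying Theorem \ref{thm:LinIndepZeros} then gives $\cat_{\sB_{3,2d}}\geq k=\beta(2d)$, which is the desired inequality. Concretely, the theorem produces the explicit witness $s=\sum_{i=1}^k s_{\sB_{3,2d}}(x_i)$ with $\cat_{\sB_{3,2d}}(s)=k$, so $\cat_{\sB_{3,2d}}\geq\cat_{\sB_{3,2d}}(s)=\beta(2d)$. I do not expect any genuine obstacle: the substance is already contained in Theorem \ref{thm:LinIndepZeros}, and the only bookkeeping is the transfer of positivity from $\rset^3$ to projective space via the even-degree homogeneity of $p$.
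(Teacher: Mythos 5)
Your proof is correct and matches the paper's intent exactly: the paper offers no separate argument, stating only that the corollary ``reformulates Theorem \ref{thm:LinIndepZeros} in the present context,'' which is precisely the specialization you carry out. Your one substantive check---transferring nonnegativity from $\rset^3$ to $\pset(\rset^2)$ via the even-degree homogeneity remark preceding Example \ref{exmp:simpleexampleforNAanduniqueness}---is the right (and only) point needing justification.
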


It seems  natural to ask whether or not the assumption on the linear independence of the set $\{s_{\sB_{3,2d}}(z) : z\in\cZ(p)\}$ in Corollary  \ref{betacarest0} can be omitted. This leads to the

%
\begin{center}\it
{\bf Question:} Suppose   $p\in \cB_{3,2d}$,   $p\in \Pos(\rset^3)$, and $|\cZ(p)|<\infty$ (or $|\cZ(p)|=\beta(2d)$).\\
Is the set $\{s_{\sB_{3,2d}}(z) : z\in\cZ(p)\}$ linearly independent?
\end{center}

Note that for the Robinson polynomial $R\in \cB_{3,6}$ the answer is ``Yes''.

\smallskip

Recall that $\beta(2d)\leq \alpha (2d)$ by the Choi--Lam--Reznick Theorem (Proposition \ref{factoripolpos}). It seems likely to conjecture that
\begin{equation}\label{Conjecture}
{\bf Conjecture:}\quad \beta(2d) \leq \cat_{\sB_{3,2d}}\leq \beta(2d)+1\quad {\rm for} ~~~~ d\geq 3.
\end{equation}
The Robinson polynomial has $10$ projective  zeros, so that $\alpha(6)=\beta(6)=10.$ Therefore, since $ \cat_{\sB_{3,6}}=11$ as shown in \cite{kunertPhD14}, this conjecture is true for $d=3$. 
As noted above, the Harris polynomial $R\in \cB_{3,10}$  has $30$ projective zeros. Hence $30\leq \beta (10)\leq \alpha(10)=31$. 

From the  proof of Theorem \ref{catzeroschoi}  it follows that (\ref{Conjecture}) holds if
\[\beta(d)+ (d'+1)(d'-d) \leq \beta(d')\quad {\rm for}\quad d'\in \nset, d\in \nset_0, d<d', (d',d)\neq (3,0).\]

\section{Carath\'eodory Numbers and Real Waring Rank}\label{realwaringrank}

In Definition \ref{def:signedCara} we introduced the signed Carath\'eodory number $\cat_{\sA,\pm}$. In this section we connect it to the \emph{real Waring rank} $w(n,2d)$, that is, to the smallest number $w(n,2d)$ such that each $f\in\cB_{n,2d}$ can be written as  real linear combination
\begin{equation}\label{eq:apolarScalar0}
f(x) = \sum_{i=1}^k c_i (x\cdot \lambda_i)^{2d}
\end{equation}
of $2d$-powers of linear forms $x\cdot \lambda_i = \lambda_{i,1}x_1+\cdots+\lambda_{i,n}x_n$, where $k\leq w(n,2d)$, $c_i\in\rset$, $\lambda_i\in\rset^n$.

Let us recall some basics on the \emph{apolar scalar product} $[\,\cdot\,,\,\cdot\,]$, see e.g.\ \cite{reznick92}. For $\alpha=(\alpha_1,...,\alpha_n)\in\nset_0^n$ with $|\alpha|:=\alpha_1+\cdots+\alpha_n=2d$ we set $\gamma_\alpha := \frac{(2d)!}{\alpha_1!\cdots\alpha_n!}$. Let $p,q\in \cB_{n,2d}$. We write $p(x) = \sum_\alpha \gamma_\alpha a_\alpha x^\alpha$ and $q(x) = \sum_\alpha \gamma_\alpha b_\alpha x^\alpha$ and define
\[[p,q] := \sum_\alpha \gamma_\alpha a_\alpha b_\alpha.\]
Then $(\cB_{n,2d},[\,\cdot\, ,\,\cdot\,])$ becomes a finite-dimensional real Hilbert space. Setting $f_\lambda(x) := (\lambda\cdot x)^{2d}$, we obtain
\begin{equation}\label{eq:apolarScalar1}
[p,f_\lambda] = \sum_\alpha \gamma_\alpha a_\alpha \lambda^\alpha = p(\lambda).
\end{equation}
Let $f$ be of the form (\ref{eq:apolarScalar0}). Then, for $p\in \cB_{n,2d}$ it follows from (\ref{eq:apolarScalar1}) that
\begin{equation}\label{eq:apolarScalar2}
L_f(p):=[f,p] =\left[\sum\nolimits_i c_i f_{\lambda_i},p\right] =\sum_{i=1}^k c_i p(\lambda_i),
\end{equation}
%
that is, the linear functional $L_f$ on $\cB_{n,2d}$ is the integral with respect to the signed measure $\mu:=\sum_{i=1}^k c_i \delta_{\lambda_i}$. Conversely, each signed atomic measure yields a function $f$ of the form (\ref{eq:apolarScalar0}) such that (\ref{eq:apolarScalar2}) holds. By the Riesz Theorem all linear functionals on $\cB_{n,2d}$ are of the form $L_f$, where $f$ is as in (\ref{eq:apolarScalar0}).

\begin{thm}
\begin{itemize}
\item[\em (i)] $w(n,2d) = \cat_{\sB_{n,2d},\pm}$.

\item[\em (ii)] $\cN_{\sB_{n,2d}} \leq w(n,2d) \leq 2\cN_{\sB_{n,2d}}$.

\item[\em (iii)] Set $N := \cN_{\sB_{n,2d}}$. Then there exists $\lambda=(\lambda_1,...,\lambda_N)\in \rset^{N\cdot n}$ such that for all $\varepsilon>0$ and $p\in\cB_{n,2d}$ we have
\[p(x) = c\cdot \sum_{i=1}^{\cN_{\sB_{n,2d}}} \left[ (\lambda_i\cdot x)^{2d} - c_i (\lambda_i^\varepsilon\cdot x)^{2d}\right]\]
for some $\lambda^\varepsilon = (\lambda_1^\varepsilon,...,\lambda_N^\varepsilon)$ with $\|\lambda-\lambda^\varepsilon\|<\varepsilon$, $|1-c_i|<\varepsilon$, $c\in\rset$.

\item[\em (iv)] The set of vectors $\lambda$ as in  (iii) is open and dense in $\rset^{\cN_{\sB_{n,2d}} \cdot n}$.
\end{itemize}
\end{thm}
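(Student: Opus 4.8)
The plan is to run everything through the apolar duality set up just before the statement, turning the Waring problem for $\cB_{n,2d}$ into the signed moment problem and then importing the differential-geometric results of Section \ref{difffunction}. First I would fix the dictionary. By the Riesz theorem every linear functional on the Hilbert space $(\cB_{n,2d},[\,\cdot\,,\,\cdot\,])$ is $L_f=[f,\,\cdot\,]$ for a unique $f\in\cB_{n,2d}$, and writing $f=\sum_\alpha\gamma_\alpha a_\alpha x^\alpha$, formula (\ref{eq:apolarScalar1}) shows that $f_\lambda=(\lambda\cdot x)^{2d}$ has apolar coordinate vector $(a_\alpha)_\alpha=(\lambda^\alpha)_\alpha=s_{\sB_{n,2d}}(\lambda)$. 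Hence, identifying $\cB_{n,2d}\cong\rset^m$ via $f\mapsto(a_\alpha)_\alpha$, a Waring representation $f=\sum_{i=1}^k c_i f_{\lambda_i}$ is literally the same datum as a signed $k$-atomic representing measure $\mu=\sum_{i=1}^k c_i\delta_{\lambda_i}$ of the sequence attached to $L_f$, by (\ref{eq:apolarScalar2}); the atoms may be normalized into $S^{n-1}_+$ by absorbing the $(2d)$-homogeneity of $f_\lambda$ into $c_i$. Taking the least $k$ for a fixed $f$ and the supremum over all $f$ gives (i), i.e.\ $w(n,2d)=\cat_{\sB_{n,2d},\pm}$. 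Part (ii) is then immediate from (i): since $\sB_{n,2d}\subset C^\infty$ the smoothness hypothesis of Remark \ref{rem:signedCarabounds} holds automatically, so the signed lower bound (the signed version of Theorem \ref{densethm}) together with Theorem \ref{thm:signedCaraUpperBound} yields $\cN_{\sB_{n,2d}}\leq\cat_{\sB_{n,2d},\pm}\leq 2\cN_{\sB_{n,2d}}$.

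For (iii), set $N=\cN_{\sB_{n,2d}}$. By Definition \ref{NAdefinition} some point realizes full rank $m$ of $DS_{N,\sB_{n,2d}}$, and since rescaling the weight-columns does not change the rank I may choose $\lambda=(\lambda_1,\dots,\lambda_N)$ with $\rank DS_{N,\sB_{n,2d}}((1,\dots,1),\lambda)=m$. Selecting $m$ linearly independent columns and freezing the other parameters at their base values, the implicit function theorem makes $S_{N,\sB_{n,2d}}$ a local diffeomorphism from the chosen $m$-dimensional slice onto a neighborhood $V$ of $S_{N,\sB_{n,2d}}((1,\dots,1),\lambda)$. Given $p\in\cB_{n,2d}$ with apolar vector $s$, I solve $S_{N,\sB_{n,2d}}(C_\varepsilon,X_\varepsilon)=S_{N,\sB_{n,2d}}((1,\dots,1),\lambda)-t\,s$ for small $t>0$: for $t$ small the right-hand side lies in $V$, and the local inverse gives $(C_\varepsilon,X_\varepsilon)\to((1,\dots,1),\lambda)$ as $t\to 0^+$, so shrinking $t$ forces $\|\lambda-\lambda^\varepsilon\|<\varepsilon$ and $|1-c_i|<\varepsilon$. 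Translating back, $S_{N,\sB_{n,2d}}((1,\dots,1),\lambda)$ is the apolar vector of $\sum_i(\lambda_i\cdot x)^{2d}$ and $S_{N,\sB_{n,2d}}(C_\varepsilon,X_\varepsilon)$ that of $\sum_i c_i(\lambda_i^\varepsilon\cdot x)^{2d}$, whence exactly $p=c\sum_{i=1}^{N}\bigl[(\lambda_i\cdot x)^{2d}-c_i(\lambda_i^\varepsilon\cdot x)^{2d}\bigr]$ with $c=1/t$. The multiplier $c$ is allowed to blow up, which is precisely what buys the closeness of $\lambda^\varepsilon$ to $\lambda$ and of $c_i$ to $1$.

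For (iv) I note that the construction in (iii) works exactly for those $\lambda$ with $\rank DS_{N,\sB_{n,2d}}((1,\dots,1),\lambda)=m$, so it suffices to show this locus is open and dense in $\rset^{N\cdot n}$. Openness is clear, since full rank means that some $m\times m$ minor is nonzero, and these minors are polynomial (hence continuous) in $\lambda$. Density follows because, as $N=\cN_{\sB_{n,2d}}$, at least one such minor is not the zero polynomial; its common zero locus with the others is therefore contained in a proper algebraic subvariety of $\rset^{N\cdot n}$, which is nowhere dense, so the full-rank set contains the dense open complement.

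I expect the main obstacle to be the quantitative closeness in (iii): Theorem \ref{thm:signedCaraUpperBound} only places $(C_\varepsilon,X_\varepsilon)$ in a fixed neighborhood and does not by itself yield $\|\lambda-\lambda^\varepsilon\|<\varepsilon$ and $|1-c_i|<\varepsilon$. One must genuinely run the local-inversion argument along the segment $S_{N,\sB_{n,2d}}((1,\dots,1),\lambda)-t\,s$ and let $t\to0$, verifying that the selected $m$-dimensional slice remains a diffeomorphism throughout. The surrounding bookkeeping — keeping straight the full-rank slice, the homogeneity normalization of atoms on $S^{n-1}_+$, and the apolar identification $s_{\sB_{n,2d}}(\lambda)\leftrightarrow f_\lambda$ — is the part that requires care, though none of it is conceptually deep once the dictionary is in place.
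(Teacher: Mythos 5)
Your proposal is correct, and for parts (i)--(iii) it follows the paper's route: the apolar dictionary identifying Waring decompositions with signed atomic representing measures gives (i); (ii) is Remark \ref{rem:signedCarabounds} (whose smoothness hypothesis is vacuous for polynomials) combined with (i); and your proof of (iii) is exactly the inverse-function-theorem argument that underlies Theorem \ref{thm:signedCaraUpperBound}, which the paper simply cites. Your worry that Theorem \ref{thm:signedCaraUpperBound} as \emph{stated} only places $(C_\varepsilon,X_\varepsilon)$ in a fixed neighborhood $U$ is a fair reading, but it is resolved inside that theorem's proof rather than by new ideas: since the rank condition is open, the neighborhood $U$ there can be taken inside $B_\varepsilon((1,\dots,1),\lambda)$ for any prescribed $\varepsilon$, which is what your local-inversion along $S_{N,\sB_{n,2d}}((1,\dots,1),\lambda)-t\,s$ accomplishes explicitly. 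The genuine divergence is (iv): the paper invokes Sard's Theorem ``as in Theorem \ref{densethm}'', whereas you argue that the full-rank locus is the complement of the common zero set of the $m\times m$ minors of $DS_{N,\sB_{n,2d}}((1,\dots,1),\lambda)$, which are polynomials in $\lambda$, at least one of them not identically zero since $N=\cN_{\sB_{n,2d}}$ (note that full rank at some $(C,X)$ forces full rank at $((1,\dots,1),X)$, as the weights only rescale columns). Your version is arguably the cleaner one here: Sard's Theorem controls critical \emph{values} in the moment cone, while statement (iv) is about the parameter space of atoms, so the proper-algebraic-subvariety argument addresses the claim more directly and yields openness and density in one stroke; it does, however, use that the $f_i$ are polynomials, whereas a Sard-type argument is the tool the paper leans on in the general $C^r$ setting.
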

\begin{proof}
(i) is clear  from the preceding considerations on the apolar scalar product.
  Remark \ref{rem:signedCarabounds} and (i) imply (ii), while (iii) follows from Theorem \ref{thm:signedCaraUpperBound} combined with (i). (iv) is a consequence of Sard's Theorem as in Theorem \ref{densethm}.
\end{proof}

With Theorem \ref{numbernaexplicit} the upper bound in (ii) was already obtained  in \cite[Cor.\ 9]{blekhe15}.

\section*{Acknowledgment}

The authors are grateful to G.\ Blekherman, M.\ Schweighofer, and C.\ Riener for valuable discussions on the subject of this paper.
K.S.\  thanks also J.\ St{\"u}ckrad to helpful discussions.
P.dD.\ was supported by the Deutsche Forschungsgemeinschaft (SCHM1009/6-1).


\end{document}